\documentclass[reqno]{amsart}
\usepackage{geometry}
\usepackage{amsmath,amssymb,amsthm,amsfonts}
\allowdisplaybreaks[4]
\usepackage{mathrsfs}
\usepackage{cite}
\usepackage{xcolor} % Blue marks the revised a priori estimates.
\newtheorem{lemma}{Lemma}[section]
\newtheorem{theorem}{Theorem}[section]

\newtheorem{proposition}{Proposition}[section]
\newtheorem{remark}{Remark}[section]

\numberwithin{equation}{section}
\begin{document}
\title[Local Existence and Uniqueness for the Compressible NSCH System with Vacuum]{Local Existence and Uniqueness for the 3D Compressible Navier-Stokes/Cahn-Hilliard System with Vacuum}

\author[Yinghua Li]{Yinghua Li}
\address[Y. Li]{School of Mathematical Sciences, South China Normal University,
	Guangzhou, 510631, China}
\email{yinghua@scnu.edu.cn}
\author[Jingxue Yin]{Jingxue Yin}
\address[J. Yin]{School of Mathematical Sciences, South China Normal University,
	Guangzhou, 510631, China}
\email{yjx@scnu.edu.cn}
\author[Haoran Zheng]{Haoran Zheng$^*$}
\address[H. Zheng]{School of Mathematical Sciences, South China Normal University,
	Guangzhou, 510631, China}
\email{hrzheng@scnu.edu.cn}
\date{}
\maketitle

{\bf Abstract.} {We establish local existence and uniqueness of strong solutions to the
compressible Navier-Stokes/Cahn-Hilliard system in a bounded
smooth domain in $\mathbb R^3$, allowing the initial
density to vanish. The initial data satisfy suitable regularity and compatibility conditions for the momentum and phase equations. The main difficulty arises from the simultaneous degeneracy of the momentum equation and the density-weighted
phase equations, which complicates the construction of compatible positive-density approximations and uniform initial estimates. 
We overcome this difficulty by introducing a time-independent residual into the chemical relation. The residual preserves the phase compatibility exactly, keeps the initial phase field unchanged, and vanishes in $H^1$ as the approximation parameter tends to zero.
For each positive-density approximation, we construct a local strong solution using a semi-discrete Galerkin scheme that discretizes only the phase variables and retains the full transport and momentum equations.
We then derive a priori estimates independent of the density
lower bound, which yield a common lifespan and allow us to pass to the vacuum limit.
Uniqueness is proved using a modified difference energy
that accounts for the coupling between the density and
the chemical potential.
To our knowledge, this is the first local well-posedness result for the three-dimensional compressible Navier-Stokes/Cahn-Hilliard system with initial vacuum.}

{\bf Keywords.}{Compressible Navier-Stokes/Cahn-Hilliard system, initial vacuum, strong solutions, compatibility conditions, time-independent residual}

{\bf AMS subject classification.}
35A01; 35A02; 35Q35; 76N10.[MSC Classification]{35A01, 35A02, 35Q35, 76N10}
\maketitle
% Unmarked title-page acknowledgment, as requested by the journal.
\begingroup
\renewcommand{\thefootnote}{}
\footnotetext{ $^*$Corresponding author
}
\endgroup

\section{Introduction}
\label{sec:problem-main}
In this paper we consider the following compressible Navier-Stokes/Cahn-Hilliard (NSCH) system proposed by Abels and Feireisl \cite{AF08}:
\begin{equation}\label{NSCH_1}
	\left\{
	\begin{aligned}
		& \rho_t + \operatorname{div}(\rho u)=0, && \text{in } \Omega \times (0,T), \\
		& (\rho u)_t  + \operatorname{div}(\rho u \otimes u) - \mathcal{L}u + \nabla p = -\operatorname{div}\left(\nabla \phi \otimes \nabla \phi - \frac{1}{2}|\nabla \phi|^2 \mathbb{I}\right), && \text{in } \Omega \times (0,T),\\
		& (\rho \phi)_t + \operatorname{div}(\rho u\phi) = \Delta \mu, && \text{in } \Omega \times (0,T),\\
		& \rho \mu =  - \Delta \phi + \rho \Psi^{\prime}(\phi), && \text{in } \Omega \times (0,T).
	\end{aligned}
	\right.
\end{equation}
Here, $\Omega\subset\mathbb R^3$ is a bounded smooth domain, $\rho\ge0$ is the density, $u$ is the velocity, $\phi$ is the
phase-field variable, and $\mu$ is the chemical potential. 
The system corresponds to the free energy density $\rho f(\rho,\phi)=\rho\Psi(\phi)+\frac12|\nabla\phi|^2$  with constant viscosities and mobility $m=1$. 
The pressure is given by the isentropic law $p=\rho^\gamma$, $\gamma>1$, and $\Psi$ denotes
the Landau potential $\Psi(s) = \frac{1}{4}(s^2-1)^2$, $s\in \mathbb{R}$.
The Lam\'{e} operator $\mathcal{L}$ is defined by $\mathcal{L} u=\lambda \Delta u + (\lambda + \eta) \nabla \operatorname{div} u$,
where the viscosity coefficients $\lambda$ and $\eta$ satisfy the physical condition
$\lambda > 0, 2\lambda + 3\eta \geq 0$.
Using the identity
\begin{equation*}
	-\operatorname{div}\left(\nabla \phi \otimes \nabla \phi\right) = -\nabla\left(\frac{1}{2}|\nabla \phi|^2\right) - (\Delta \phi)\nabla \phi
\end{equation*}
and the continuity equation \eqref{NSCH_1}$_1$, we can rewrite the system \eqref{NSCH_1} as follows
\begin{equation}\label{NSCH}
	\left\{
	\begin{aligned}
		&  \rho_t + \operatorname{div}(\rho u)=0,
		&& \text{in } \Omega\times(0,T), \\
		& \rho u_t  + \rho u\cdot\nabla u
		- \mathcal{L} u
		+ \nabla  p
		= -(\Delta\phi)\,\nabla\phi,
		&& \text{in } \Omega\times(0,T), \\
		& \rho \phi_t + \rho u\cdot\nabla\phi
		= \Delta\mu,
		&& \text{in } \Omega\times(0,T), \\
		& \rho \mu = -\Delta\phi + \rho \Psi'(\phi),
		&& \text{in } \Omega\times(0,T).
	\end{aligned}
	\right.
\end{equation}
The system \eqref{NSCH} is subject to the initial conditions:
\begin{equation}\label{NSCH_I_C}
	\left(\rho, \rho u, \rho\phi\right)_{t=0} = \left(\rho_0, \rho_0u_0, \rho_0\phi_0\right), \quad \text{in } \Omega,
\end{equation}
and the boundary conditions:
\begin{equation}\label{NSCH_B_C}
	u = 0, \quad \partial_n \phi = \partial_n \mu = 0, \quad \text{on } \partial \Omega \times (0,T).
\end{equation}

The NSCH system is a diffuse interface model for two-phase flows that allows topological changes of the interface. In the classical sharp interface formulation, the interface between two immiscible fluids is represented by a moving hypersurface; the fluid equations are imposed on the two sides of the interface and are coupled through suitable jump conditions. Although this description is physically natural, it becomes difficult to handle when interfaces merge, split, touch the boundary, or undergo large deformations. The phase-field approach avoids this geometric obstruction by replacing the sharp interface with a thin transition layer and introducing an order parameter $\phi$ to distinguish the two phases. The Cahn-Hilliard equation then describes conservative phase separation driven by a chemical potential, while the Navier-Stokes equations govern the bulk fluid motion.

This viewpoint can be traced back to the dynamic classification of Hohenberg and Halperin \cite{HH77}, where the first Navier-Stokes/Cahn-Hilliard model, also known as Model H, was introduced. The system was later derived by Gurtin, Polignone and Vi\~nals \cite{GPV96} within the framework of continuum mechanics, and by Liu and Shen \cite{LS03} through an energetic variational approach. Model H describes incompressible binary mixtures with constant total density, corresponding to the homogeneous case. We refer to \cite{Abel09a,Boyer99,CG12,GG10,GGM16,GMT19} for well-posedness results on it. A natural generalization of Model H is given by nonhomogeneous incompressible mixtures, where the mixture density is treated as an independent unknown and the mass-averaged velocity is assumed to be divergence-free; see \cite{FNG25,GT20,Zhao19}.
%and the preprint by Fang, Duan and Guo (\href{https://arxiv.org/abs/2511.08876}{arXiv:2511.08876}).
For unmatched-density and quasi-incompressible models, we refer to \cite{Abel09,ADG13,AGG24,EZAS23}.

Compared with the incompressible theory, rigorous results for compressible NSCH systems remain far more limited. Compressible and quasi-incompressible diffuse interface models were developed by Lowengrub and Truskinovsky \cite{LT98}. A mathematically tractable compressible diffuse interface model was later proposed by Abels and Feireisl \cite{AF08}. Kotschote and Zacher \cite{KZ15} established local-in-time existence and uniqueness of strong solutions for the compressible NSCH model derived in \cite{LT98}.
More recently, Elbar and Poulain \cite{EP24} studied a generalized compressible NSCH model with friction effects and proved the existence of global weak solutions under simplifying assumptions.
 %while Basari\'c and Giorgini (preprint, \href{https://arxiv.org/abs/2506.07835}{arXiv:2506.07835}) obtained global weak solutions with the singular Flory-Huggins entropy of mixing.
Other related topics, including stationary problems \cite{LW20,LW22}, moving contact lines \cite{GGM16}, dynamic boundary conditions \cite{CFMMPP19}, and low Mach number limits \cite{ALN24}, have also been investigated for NSCH systems.

When vacuum is taken into account in compressible two-phase flows, vanishing-density states arise as a limiting idealization of extremely low-density regions, although vapor typically retains a nonzero density.
Such regions are physically relevant, for example, in liquid-vapor flows involving cavitation or strong expansion, or during rupture events, where an interfacial region is evacuated as a liquid filament pinches off.  Recently, Ding, Li, Yan and Zheng \cite{DLYZ26} established global existence and uniqueness of strong solutions allowing vacuum in the one-dimensional setting. The corresponding problem in the multidimensional case, raised in \cite{GT20} for the incompressible nonhomogeneous model and in \cite{DLYZ26} for the compressible one, remains open.

To the best of our knowledge, this paper establishes the first local well-posedness result for strong solutions to the three-dimensional compressible NSCH system \eqref{NSCH}--\eqref{NSCH_B_C} with initial vacuum. The main difficulty comes from the presence of vacuum. When $\rho$ is allowed to vanish, the momentum equation becomes degenerate, and one cannot divide by the density to recover $u_t$ or $\phi_t$.
For compressible Navier-Stokes equations with vacuum, suitable compatibility conditions are usually imposed to ensure that the initial material acceleration is square-integrable. This approach goes back to Salvi and Stra\v{s}kraba \cite{SS93} and was further developed in \cite{CK03,CCK04,CK06}. In the NSCH system, however, the
	Cahn-Hilliard coupling introduces an additional degeneracy:
	the chemical relation
	\[
	\rho\mu=-\Delta\phi+\rho\Psi'(\phi)
	\]
	cannot be solved for $\mu$ by division by $\rho$ on the vacuum
	set, while the phase equation
	\[
	\rho(\phi_t+u\cdot\nabla\phi)=\Delta\mu
	\]
	does not directly control the unweighted phase derivative $\phi_t$.
On the set where $\rho$ vanishes, the momentum equation loses its inertial and pressure terms, the chemical relation reduces to $\Delta\phi=0$, and the phase equation reduces to $\Delta\mu=0$: the phase field and the chemical potential are harmonic on the vacuum set, the capillary stress vanishes, and no evolution law for $\phi$ is available. Consequently, direct regularization of the density does not automatically preserve the phase compatibility needed for uniform initial estimates.
We resolve this difficulty by preserving the initial phase field and absorbing the incompatibility of the chemical relation into a time-independent residual, which vanishes in $H^1$ as the approximation parameter tends to zero.

%The Navier-Stokes/Allen-Cahn (NSAC) system is closely related to NSCH and provides a useful point of comparison, see \cite{BLesgen99,FPRS10}.
%In recent work, Li, Zheng, and Zhou (preprint, \href{https://arxiv.org/abs/2605.24568}{arXiv:2605.24568}) established local well-posedness and a blow-up criterion for the three-dimensional compressible NSAC system with vacuum. Their analysis imposes a compatibility condition on the initial phase field, but uses time-weighted estimates to avoid a velocity compatibility condition.
%Unlike the second-order Allen-Cahn dynamics, the NSCH phase
%equation involves $\Delta\mu$, while the chemical relation
%$\rho\mu=-\Delta\phi+\rho\Psi'(\phi)$ degenerates at vacuum.
%For positive density, eliminating $\mu$ introduces spatial
%derivatives of terms involving division by $\rho$, making
%estimates independent of the density lower bound substantially
%more difficult to obtain.

While the diffuse interface formulation permits topological changes at the level of the model, analyzing them rigorously, such as whether a thinning filament ruptures in finite time, requires the pointwise information provided by strong solutions. Since topological changes are precisely where vacuum regions form, a local strong theory with vacuum is a prerequisite for such questions.
Our main result is stated as follows.
\begin{theorem}\label{main-thm}
	Let $\Omega\subset\mathbb{R}^3$ be a bounded smooth connected domain and
	$3<q<6$.
	Assume that the initial data satisfy
	\begin{align*}
	&\rho_0\ge0,\qquad 0<\int_\Omega\rho_0\,\mathrm dx,
	\qquad \rho_0\in W^{1,q}(\Omega),\\
	&u_0\in H^2(\Omega)\cap H_0^1(\Omega),\qquad
	\phi_0\in H^3(\Omega),\qquad \mu_0\in H^3(\Omega),
	\end{align*}
	and the compatibility conditions
	\begin{equation}\label{C_C}
		\begin{cases}
			\mathcal{L}u_0
			-
			\nabla(\rho_0^\gamma)
			-
			(\Delta\phi_0)\nabla\phi_0
			=
			\sqrt{\rho_0}\,g_0,
			\quad \text{in }\Omega,
			\\
			\Delta\mu_0
			=
			\rho_0\,h_0,
			\quad \text{in }\Omega,
			\\
			\int_\Omega \rho_0 h_0\,\mathrm d x=0,
			\\
			u_0=0,
			\qquad
			\partial_n\phi_0
			=
			\partial_n\mu_0
			=
			0,
			\quad \text{on }\partial\Omega,\\
			\rho_0\mu_0=-\Delta\phi_0+\rho_0\Psi'(\phi_0),
		\end{cases}
	\end{equation}
	where
	$g_0\in L^2(\Omega)$ and $h_0\in H^1(\Omega)$.
	Then there exists a time $T_0>0$ such that the system  \eqref{NSCH}--\eqref{NSCH_B_C} admits a unique strong solution
	$(\rho,u,\phi,\mu)$
	on $\Omega\times(0,T_0)$ with
	{$\rho\ge0$ a.e. in $\Omega\times(0,T_0)$ and}
	\[
	\left\{
	\begin{aligned}
		&
		\rho
		\in
		L^\infty(0,T_0;W^{1,q}),
		\qquad
		\rho_t
		\in
		L^\infty(0,T_0;L^2),
		\\
		&
		u
		\in
		L^\infty(0,T_0;H^2\cap H_0^1)
		\cap
		L^2(0,T_0;W^{2,q}),
		\\
		&
		\sqrt{\rho}\,u_t
		\in
		L^\infty(0,T_0;L^2),
		\qquad
		u_t
		\in
		L^2(0,T_0;H_0^1),
		\\
		&
		\phi
		\in
		L^\infty(0,T_0;H^3),
		\qquad
		\phi_t
		\in
		L^\infty(0,T_0;H^1),
		\\
		&
		\sqrt{\rho}\,\phi_t
		\in
		L^\infty(0,T_0;L^2),
		\\
		&
		\mu
		\in
		L^\infty(0,T_0;H^2),
		\qquad
		\mu_t
		\in
		L^2(0,T_0;H^1).
	\end{aligned}
	\right.
	\]
	Moreover,
	\[
	\rho\in C([0,T_0];C(\overline\Omega)),
	\qquad
	u\in C([0,T_0];H_0^1),
	\]
	\[
	\phi\in C([0,T_0];H^2),
	\qquad
	\mu\in C([0,T_0];H^1).
	\]
	The equations in \eqref{NSCH} hold almost everywhere in
	$\Omega\times(0,T_0)$.
\end{theorem}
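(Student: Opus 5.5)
The plan follows the Cho--Kim strategy for compressible Navier--Stokes with vacuum, adapted to the doubly degenerate Cahn--Hilliard coupling as outlined in the abstract. There are four steps: approximation, construction for positive density, uniform estimates, and passage to the limit followed by uniqueness.

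\textbf{Step 1: approximate data.} For $\delta\in(0,1)$ I set $\rho_0^\delta=\rho_0+\delta$ and keep $u_0$ and $\phi_0$ unchanged. The compatibility conditions must be kept with the same right-hand sides up to small errors.

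For the velocity, I solve $\mathcal L u_0^\delta-\nabla((\rho_0^\delta)^\gamma)-\Delta\phi_0\nabla\phi_0=\sqrt{\rho_0^\delta}\,g_0$ with $u_0^\delta\in H^2\cap H^1_0$. Then $u_0^\delta\to u_0$ in $H^2$, since $\nabla((\rho_0^\delta)^\gamma)\to\nabla(\rho_0^\gamma)$ in $L^2$ and $\sqrt{\rho_0^\delta}g_0\to\sqrt{\rho_0}g_0$.

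For the phase, I keep $\mu_0$ and the relation $\Delta\mu_0=\rho_0h_0$. I write $\rho_0^\delta h_0^\delta=\rho_0h_0$ with $h_0^\delta=\rho_0h_0/(\rho_0+\delta)$, so that $\sqrt{\rho_0^\delta}\,h_0^\delta$ stays bounded in $L^2$.

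The chemical relation $\rho_0^\delta\mu_0=-\Delta\phi_0+\rho_0^\delta\Psi'(\phi_0)+\delta(\mu_0-\Psi'(\phi_0))$ then holds exactly. So I introduce the time-independent residual $r^\delta:=\delta(\mu_0-\Psi'(\phi_0))$. It is $O(\delta)$ in $H^1$, and I carry the modified relation $\rho\mu=-\Delta\phi+\rho\Psi'(\phi)+r^\delta$ throughout the approximate problem.

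\textbf{Step 2: solve the approximate problem for fixed $\delta$.} Here the density is bounded below, so I use a semi-discrete Galerkin scheme. The phase variables $\phi,\mu$ are projected onto the first $m$ Neumann eigenfunctions. The transport equation is solved by characteristics, and the linearized momentum equation is solved by the standard theory for positive density. A fixed point argument gives a solution on a short time interval depending on $\delta$ and $m$. Estimates uniform in $m$, using the lower bound $\delta$, then let $m\to\infty$.

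\textbf{Step 3: a priori estimates independent of $\delta$.} This is the core of the proof. I work with the functional
\[
\Phi(t)=1+\|\rho\|_{W^{1,q}}+\|u\|_{H^2}+\|\sqrt\rho u_t\|_{L^2}+\|\phi\|_{H^3}+\|\sqrt\rho\phi_t\|_{L^2}+\|\mu\|_{H^2}+\|\nabla\phi_t\|_{L^2}
\]
and aim for $\Phi(t)\le C$ on $[0,T_0]$, with $C$ and $T_0$ depending only on the norms of the data. The estimates proceed as follows.

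1. Test the $t$-differentiated momentum equation with $u_t$, in the usual Cho--Kim way. This controls $\|\sqrt\rho u_t\|_{L^\infty L^2}$ and $\|\nabla u_t\|_{L^2L^2}$. Its initial value is $\|g_0\|_{L^2}$, by compatibility.
2. Test the $t$-differentiated phase equation, combined with $\partial_t$ of the chemical relation, with $\phi_t$. Because $r^\delta$ is time-independent, $\partial_t(\rho\mu)=-\Delta\phi_t+\partial_t(\rho\Psi'(\phi))$ has no residual term. This yields $\frac{d}{dt}\big(\|\sqrt\rho\phi_t\|^2/2\big)+\|\nabla\mu_t\|$-type dissipation, or alternatively $\|\nabla\phi_t\|^2$ control after testing suitably with $\mu_t$. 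The initial value of $\sqrt\rho\phi_t$ is $h_0^\delta-u_0\cdot\nabla\phi_0$ weighted, which is bounded by $\|h_0\|$.
3. Apply elliptic regularity for Neumann problems. For $\Delta\mu=\rho(\phi_t+u\cdot\nabla\phi)$ this gives $\mu\in H^2$, with the mean of $\mu$ controlled through $\int\rho\mu$ and $\int\rho_0>0$. For $-\Delta\phi=\rho\mu-\rho\Psi'-r^\delta$ this gives $\phi\in H^3$. For the Lamé system it gives $u\in H^2$ and $W^{2,q}$.
4. Estimate $\rho$ in $W^{1,q}$ by transport.

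Gronwall's inequality then closes the bound. I expect the main obstacle to be this step: controlling $\phi_t$ in $H^1$ without a density weight, since only $\sqrt\rho\phi_t$ comes from energy. I would get it by differentiating the chemical relation in time and testing against $\phi_t$. This gives $\|\nabla\phi_t\|^2=\int\rho\mu_t\phi_t+\dots$, and the right side is handled with $\|\sqrt\rho\phi_t\|$ and $\|\nabla\mu_t\|$ by Poincaré-type inequalities using the positive mass.

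\textbf{Step 4: vacuum limit and uniqueness.} The uniform bounds give weak-$*$ convergence and, via Aubin--Lions, strong convergence. Since $r^\delta\to0$ in $H^1$, the limit solves the original system. Continuity in time follows from the regularity and standard interpolation.

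For uniqueness, take two solutions and form the differences $\bar\rho,\bar u,\bar\phi,\bar\mu$. I estimate the combination $\|\bar\rho\|_{L^{3/2}\cap L^2}^2+\|\sqrt\rho\bar u\|^2+\|\sqrt\rho\bar\phi\|^2+\|\nabla\bar\phi\|^2$. The term $\bar\rho\mu_2$ in the difference of the chemical relation is absorbed using the $L^\infty$ bound on $\mu_2$ together with the $\bar\rho$ estimate. A Gronwall inequality with zero initial data then concludes.
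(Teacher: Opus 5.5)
Your velocity approximation is fine, and a time-independent residual such as $r^\delta=\delta(\mu_0-\Psi'(\phi_0))$ is the right kind of device. Step 1 nonetheless fails at exactly the point the paper's construction is built to handle. Because you hold $\mu_0$ fixed and impose $\rho_0^\delta h_0^\delta=\rho_0h_0$, the division by the approximate density just moves into the phase datum. At positive density the phase equation forces
\[
\phi_t^\delta(0)=h_0^\delta-u_0^\delta\cdot\nabla\phi_0,
\qquad
h_0^\delta=\frac{\rho_0}{\rho_0+\delta}\,h_0 .
\]
Pointwise $|h_0^\delta|\le|h_0|$, so your bound on $\sqrt{\rho_0^\delta}\,\phi_t^\delta(0)$ holds. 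However, $\nabla h_0^\delta$ contains $\delta h_0\nabla\rho_0/(\rho_0+\delta)^2$, which is of order $\delta^{-1}$ on the layer $\{\rho_0\sim\delta\}$. In fact $h_0^\delta\to\mathbf 1_{\{\rho_0>0\}}h_0$ a.e. This limit jumps across the boundary of a vacuum region wherever $h_0\neq0$ there, so it is not in $H^1$. Since $u_0^\delta\cdot\nabla\phi_0$ converges in $H^1$, it follows that $\|\nabla\phi_t^\delta(0)\|_{L^2}$ cannot stay bounded as $\delta\to0$. Yet $\|\nabla\phi_t\|_{L^2}$ is in your $\Phi$, and $\phi_t\in L^\infty(0,T_0;H^1)$ is part of the conclusion, so the high-order phase estimate has no $\delta$-uniform starting value. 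The paper fixes the other variable:
\begin{itemize}
\item it keeps $h_0$, shifted by a constant $c_\delta\to0$ to restore $\int\rho_0^\delta h_0^\delta\,\mathrm dx=0$;
\item it defines $\mu_0^\delta$ by the Neumann problem $\Delta\mu_0^\delta=\rho_0^\delta h_0^\delta$ plus a constant, so that $\mu_0^\delta\to\mu_0$ in $H^3$;
\item it puts the whole mismatch into $R^\delta=\rho_0^\delta\mu_0^\delta+\Delta\phi_0-\rho_0^\delta\Psi'(\phi_0)$, which tends to $0$ in $H^1$ by the $W^{1,q}\cdot H^1$ product estimate.
\end{itemize}
Then $\phi_t^\delta(0)=h_0^\delta-u_0\cdot\nabla\phi_0$ is bounded in $H^1$ uniformly in $\delta$.

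Two later steps also fail as written.

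\emph{The $\nabla\phi_t$ bound.} Your identity for the main obstacle (the time-differentiated chemical relation tested by $\phi_t$) expresses $\|\nabla\phi_t(t)\|_{L^2}^2$ through $\int\rho\mu_t\phi_t\,\mathrm dx$ at the same instant. But $\mu_t$ is controlled only in $L^2$ in time, so this gives at best an integrated-in-time bound, not $L^\infty(0,T;L^2)$. The $L^\infty$-in-time bound requires testing the differentiated phase equation by $\mu_t$. That produces
\[
\int\phi_{tt}\rho_t[\mu-\Psi'(\phi)]\,\mathrm dx-\int\phi_{tt}\rho\Psi''(\phi)\phi_t\,\mathrm dx ,
\]
in which $\phi_{tt}$ is uncontrolled. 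The paper integrates these terms by parts in space and time using $\rho_t=-\operatorname{div}(\rho u)$. It then closes with the corrected functional $\|\nabla\phi_t\|_{L^2}^2-\mathcal B(t)+\kappa\|\sqrt\rho\,\phi_t\|_{L^2}^2$, whose coercivity relies on the lower-order bounds.

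\emph{Uniqueness.} Testing the difference of the chemical relations by $\bar\phi_t$ produces $\int\bar\rho\,(\mu_2-\Psi'(\phi_2))\,\bar\phi_t\,\mathrm dx$. This term cannot be absorbed using $\|\mu_2\|_{L^\infty}$, because $\bar\phi_t$ is controlled in no norm and cannot be recovered by dividing by $\rho_1$. Set $\Theta_2=\mu_2-\Psi'(\phi_2)$ and write the term as $\frac{\mathrm d}{\mathrm dt}\int\bar\rho\,\Theta_2\bar\phi\,\mathrm dx$ minus terms coming from the difference continuity equation and from $\Theta_{2t}$. Then $\|\mu_{2t}\|_{H^1}^2$ enters the Gronwall weight, and the cross term is absorbed into a modified difference energy. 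You also need $\bar\phi(0)=0$, not merely $\rho_0\bar\phi(0)=0$. This follows by testing the chemical-relation difference at $t=0$ with $\bar\phi(0)$, which gives $\nabla\bar\phi(0)=0$, and then using connectedness and positive mass.
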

\begin{remark}\label{Remark 1.1}
	The second condition in \eqref{C_C} defines an initial phase derivative
	without dividing by the density.  Set
	\[
	\phi_{1,0}:=h_0-u_0\cdot\nabla\phi_0\in H^1(\Omega).
	\]
	Then \(\Delta\mu_0=\rho_0h_0\) implies
	\[
	\rho_0\phi_{1,0}
	=
	\Delta\mu_0-\rho_0u_0\cdot\nabla\phi_0.
	\]
	Thus the trace of \eqref{NSCH}$_3$ at \(t=0\) gives only
		\(\rho_0\phi_t(0)=\rho_0\phi_{1,0}\).  On the vacuum set this equation
		does not determine \(\phi_t(0)\).  For the positive-density
	problem, we take \(\phi_{1,0}\) as the initial value of $\phi_t$.
    All estimates below involve $\phi_{1,0}$ only through its $H^1$-norm
    (or the weighted $L^2$-norm $\|\sqrt{\rho_0}\phi_{1,0}\|_{L^2}$), and are therefore uniform in the approximation parameter $\delta$. 
\end{remark}

\begin{remark}
	Although the density-weighted initial conditions do not directly prescribe the velocity and phase field on the initial vacuum set, the strong solution in Theorem~\ref{main-thm} is unique throughout $\Omega$, including the vacuum region, as in the heat-conductive compressible Navier--Stokes equations with vacuum \cite{LZ23}. This uniqueness follows from the non-degenerate viscous dissipation and the elliptic coupling between the phase field and the chemical potential, which, together with the boundary conditions, control the differences of solutions even where the density vanishes.
\end{remark}

We now make some comments on our proof. The first difficulty lies in constructing positive-density
approximations that preserve the initial compatibility conditions.
Although replacing $\rho_0$ by a strictly positive density
$\rho_0^\delta$ removes the degeneracy at the approximate level,
it does not automatically preserve both the chemical relation and
the phase compatibility condition. With the initial phase field $\phi_0$ fixed (independent of $\delta$), 
the direct choice
\[
\mu_0^\delta
=\Psi'(\phi_0)-(\rho_0^\delta)^{-1}\Delta\phi_0
\]
introduces division by the approximate density.
Unfortunately, this choice does not ensure uniform control of
$(\rho_0^\delta)^{-1}\Delta\mu_0^\delta$, which determines the
initial material derivative of the phase. Thus, positivity alone
is insufficient to obtain the initial estimates needed for
the vacuum limit.

The key observation is that the phase compatibility condition
can be preserved first, while the resulting mismatch in the
chemical relation is retained as a small, time-independent residual.
More precisely, we construct $\mu_0^\delta$ through a Neumann
problem satisfying
$\Delta\mu_0^\delta=\rho_0^\delta h_0^\delta$, with
$h_0^\delta$ uniformly bounded in $H^1$, and define $R^\delta$
so that
\[
\rho^\delta\mu^\delta
=-\Delta\phi^\delta
+\rho^\delta\Psi'(\phi^\delta)+R^\delta.
\]
In this construction, introducing $R^\delta$ allows us to retain
the initial phase field and reconcile the chemical relation with the phase compatibility condition
without losing uniform control of the initial time derivatives.
The residual is chosen to have zero spatial mean and satisfies
$R^\delta\to0$ in $H^1(\Omega)$.
Its zero mean allows the energy correction
$\int_\Omega R^\delta\phi^\delta\,\mathrm dx$
to be controlled by the phase gradient through the Poincar\'{e} inequality.
The remaining contribution to the modified energy identity,
\[
-\int_\Omega R^\delta u^\delta\cdot
\nabla\phi^\delta\,\mathrm dx,
\]
is controlled by viscous dissipation and Gronwall's inequality.
Moreover, time differentiation of the chemical relation produces
no derivative of $R^\delta$.

A second difficulty concerns the construction of solutions to
the resulting perturbed system while retaining the estimates needed for
the subsequent vacuum limit. We discretize only the phase
variables $(\phi,\mu)$ in a Neumann spectral space, leaving
the transport and Lam\'e equations for $(\rho,u)$ unprojected.
This choice preserves access to the characteristic representation
of the density and the full elliptic estimates for the velocity,
which are essential for controlling $\rho$ in $W^{1,q}$ and
$u$ in $W^{2,q}$ independently of the phase dimension.
However, the discrete chemical relation holds only against
spectral test functions and cannot be used as a pointwise identity.
To preserve the cancellation between the momentum and phase
equations, we therefore use the compatible capillary force
\[
[\rho_m\mu_m-\rho_m\Psi'(\phi_m)-R_m]\nabla\phi_m.
\]
Weighted projections provide admissible chemical test functions
and allow the coupled estimates to close uniformly in $m$.

The approximation is consequently carried out in two stages.
We first pass to the limit $m\to\infty$ at fixed $\delta>0$,
obtaining a strong solution of the positive-density perturbed
system. We then derive coupled estimates for the weighted time
derivatives and phase gradients whose constants are independent
of the density lower bound. These estimates provide a common
lifespan and the compactness required to let $\delta\to0$.
Since $R^\delta\to0$ in $H^1(\Omega)$, the residual disappears
in this limit, and the original system with vacuum is recovered.
Beyond the present system, we believe that the use of a vanishing, time-independent residual may provide a useful approximation strategy for other degenerate coupled systems in which direct regularization fails to preserve the initial compatibility conditions needed for uniform estimates.

The remainder of the paper is organized as follows.
In Section~\ref{sec:perturbed-problem}, we construct compatible
positive-density initial data and introduce the corresponding
perturbed system. In Section~\ref{sec:positive-density}, we
state local existence and a continuation criterion for this system,
with the complete proofs deferred to Appendix~\ref{app:construction}.
In Section~\ref{sec:apriori}, we derive a priori estimates
independent of the density lower bound.
Using these estimates, we obtain a common lifespan, pass to
the vacuum limit, and prove uniqueness in
Section~\ref{sec:vacuum-limit}, thereby completing the proof
of Theorem~\ref{main-thm}.
Appendix~\ref{app:linear-lemmas} collects the auxiliary linear results.
Appendix~\ref{app:construction} gives the positive-density construction
and its semi-discrete estimates, following the order of
Section~\ref{sec:apriori}.

\section{Formulation of the positive-density approximation problem}
\label{sec:perturbed-problem}

In this section, we first construct compatible approximate initial data. Then, we introduce the approximation problem with the resulting time-independent chemical residual.  Throughout this paper, we set
\[
\int_\Omega f \,\mathrm d x:= \int f\,\mathrm d x .
\]
For $1\le p\le \infty$ and integers $k\ge0$, the standard Sobolev spaces are denoted by:
\[
L^p=L^p(\Omega),\quad
W^{k,p}=W^{k,p}(\Omega),\quad
H^k=W^{k,2}(\Omega), \quad
H_0^1=\{v\in H^1(\Omega): v|_{\partial\Omega}=0\}.
\]

\subsection{Preparation of compatible positive-density data}
\label{subsec:compatible-positive-data}

We construct positive-density initial data
$(\rho_0^\delta,u_0,\phi_0,\mu_0^\delta)$ satisfying the
perturbed compatibility conditions according to \eqref{C_C},
with bounds uniform in $\delta$.
With $\phi_0$ fixed, directly defining $\mu_0^\delta$
from the unperturbed chemical relation involves division
by $\rho_0^\delta$ and does not automatically yield
uniform control of the initial phase derivative.
We therefore impose the phase compatibility condition
exactly and absorb the mismatch in the chemical relation
into a small, time-independent residual $R^\delta$
with zero spatial mean.

We first prepare the velocity data. Define
\begin{equation*}
 \rho_0^\delta=(\rho_0^\gamma+\delta)^{1/\gamma},
 \qquad 0<\delta<1.
\end{equation*}
It is clear that $\rho_0^\delta\ge\delta^{1/\gamma}\ge\delta$, $(\rho_0^\delta)^\gamma=\rho_0^\gamma+\delta$ and $\nabla(\rho_0^\delta)^\gamma=\nabla\rho_0^\gamma$.
Noticing that
$
 \displaystyle\nabla\rho_0^\delta
 =\left(\frac{\rho_0}{\rho_0^\delta}\right)^{\gamma-1}
 \nabla\rho_0,
$
it holds that
\begin{equation}\label{rho0-delta-convergence}
 \rho_0^\delta\rightarrow\rho_0
 \quad\hbox{strongly in }W^{1,q}.
\end{equation}

Next, set
\begin{equation*}
 g_0^\delta=\left(\frac{\rho_0}{\rho_0^\delta}\right)^{1/2}g_0.
\end{equation*}
Then $\|g_0^\delta\|_{L^2}\le\|g_0\|_{L^2}$, and it can be verified that
\begin{equation}\label{momentum-compatibility-delta}
 \mathcal Lu_0-\nabla(\rho_0^\delta)^\gamma
 -(\Delta\phi_0)\nabla\phi_0
 =\sqrt{\rho_0^\delta}\,g_0^\delta.
\end{equation}

Next we prepare the phase data. Let
\begin{equation*}
 c_\delta=
 \frac{\int \rho_0^\delta h_0\,\mathrm dx}
 {\int \rho_0^\delta\,\mathrm dx},
 \qquad h_0^\delta=h_0-c_\delta.
\end{equation*}
Because $\int\rho_0h_0=0$, the numerator equals
$\int(\rho_0^\delta-\rho_0)h_0$ and tends to zero.  The denominator tends
to $\int\rho_0>0$.  Hence
\begin{equation}\label{h0-delta-properties}
 c_\delta\to0,\qquad h_0^\delta\to h_0\ \hbox{in }H^1,
 \qquad \int \rho_0^\delta h_0^\delta\,\mathrm dx=0.
\end{equation}
Now, consider the solution $\widehat\mu_0^\delta$ of
\begin{equation}\label{mu0-delta-Neumann}
 \Delta\widehat\mu_0^\delta=\rho_0^\delta h_0^\delta,\qquad
 \partial_n\widehat\mu_0^\delta=0,\qquad
 \int \widehat\mu_0^\delta\,\mathrm dx
 =\int \mu_0\,\mathrm dx, \quad \text{in } \Omega.
\end{equation}
Set $\mu_0^\delta=\widehat\mu_0^\delta+a_\delta$, with
\begin{equation}\label{mu0-delta-shift}
 a_\delta=
 \frac{\displaystyle\int \rho_0^\delta
 \bigl(\Psi'(\phi_0)-\widehat\mu_0^\delta\bigr)\,\mathrm dx}
 {\displaystyle\int \rho_0^\delta\,\mathrm dx}.
\end{equation}
Since $a_\delta$ is constant, we have $
 \Delta\mu_0^\delta=\rho_0^\delta h_0^\delta$ and $\partial_n\mu_0^\delta=0$.
By the product estimate
\begin{equation}\label{W1q-H1-product}
 \|\rho_0^\delta h_0^\delta\|_{H^1}\le C\|\rho_0^\delta\|_{W^{1,q}}\|h_0^\delta\|_{H^1},
 \qquad q>3,
\end{equation}
it follows that $\rho_0^\delta h_0^\delta\to\rho_0h_0$ in $H^1$.  Subtract
$\Delta\mu_0=\rho_0h_0$ from \eqref{mu0-delta-Neumann}.  Neumann elliptic
regularity gives
\[
 \|\widehat\mu_0^\delta-\mu_0\|_{H^3}
 \le C\|\rho_0^\delta h_0^\delta-\rho_0h_0\|_{H^1}
 \rightarrow0.
\]
Integrating the last relation in \eqref{C_C} and using
$\partial_n\phi_0=0$, we obtain
\[
 \int \rho_0\bigl(\mu_0-\Psi'(\phi_0)\bigr)\,\mathrm dx=0.
\]
Thus the numerator in \eqref{mu0-delta-shift} tends to zero and its
denominator tends to $\int\rho_0>0$, which implies $a_\delta\to0$ and
\begin{equation}\label{mu0-delta-convergence}
 \|\mu_0^\delta-\mu_0\|_{H^3}\rightarrow0.
\end{equation}

Finally we define the residual and verify its properties. Set
\begin{equation*}
 R^\delta=\rho_0^\delta\mu_0^\delta+\Delta\phi_0
 -\rho_0^\delta\Psi'(\phi_0).
\end{equation*}
By \eqref{mu0-delta-shift} and $\partial_n\phi_0=0$, one derives
\begin{equation}\label{R-delta-zero-mean}
 \int  R^\delta\,\mathrm dx=0.
\end{equation}
To verify that the perturbation disappears, use the last relation in
\eqref{C_C} to write
\begin{align*}
 R^\delta= (\rho_0^\delta-\rho_0)\mu_0
 +\rho_0^\delta(\mu_0^\delta-\mu_0) -(\rho_0^\delta-\rho_0)\Psi'(\phi_0).
\end{align*}
Then \eqref{rho0-delta-convergence} and \eqref{mu0-delta-convergence} imply
$R^\delta\rightarrow0$ strongly in $H^1$.

In summary, there holds
$$
\sup_{0<\delta<1}\left(\|\rho_0^\delta\|_{W^{1,q}}+\|g_0^\delta\|_{L^{2}}+\|h_0^\delta\|_{H^{1}}+\|\mu_0^\delta\|_{H^{3}}\right)<\infty,
\qquad
\|R^\delta\|_{H^{1}}\rightarrow0,
$$
which are precisely the properties used in Sections \ref{sec:apriori} and \ref{sec:vacuum-limit}.

\subsection{The positive-density approximation problem}
\label{subsec:approximation-system}
Let $0<\delta<1$, take the initial data as
\begin{equation}\label{all-delta-data}
	u_0^\delta=u_0,\qquad \phi_0^\delta=\phi_0,\qquad
	(\rho^\delta,u^\delta,\phi^\delta,\mu^\delta)(0)
	=(\rho_0^\delta,u_0,\phi_0,\mu_0^\delta).
\end{equation}
We consider the positive-density approximation system
\begin{equation}\label{perturbed_system}
\left\{
\begin{aligned}
&\rho_t^\delta+\operatorname{div}(\rho^\delta u^\delta)=0,
&&\text{in }\Omega\times(0,T),\\
&\rho^\delta
\left(u_t^\delta+u^\delta\cdot\nabla u^\delta\right)
-\mathcal Lu^\delta
+\nabla\bigl((\rho^\delta)^\gamma\bigr)
= -\Delta\phi^\delta\nabla\phi^\delta,
&&\text{in }\Omega\times(0,T),\\
&\rho^\delta
\left(\phi_t^\delta+u^\delta\cdot\nabla\phi^\delta\right)
=\Delta\mu^\delta,
&&\text{in }\Omega\times(0,T),\\
&\rho^\delta\mu^\delta
=-\Delta\phi^\delta+\rho^\delta\Psi'(\phi^\delta)+R^\delta,
&&\text{in }\Omega\times(0,T),
\end{aligned}
\right.
\end{equation}
subject to the initial conditions
\begin{equation}\label{I_C}
(\rho^\delta,u^\delta,\phi^\delta,\mu^\delta)|_{t=0}
=
(\rho_0^\delta,u_0^\delta,\phi_0^\delta,\mu_0^\delta),
\end{equation}
and the boundary conditions
\begin{equation}\label{B_C}
	u^\delta=0,
	\qquad
	\partial_n\phi^\delta=\partial_n\mu^\delta=0
	\quad\text{on }\partial\Omega\times(0,T).
\end{equation}
Moreover, the initial data satisfy the following compatibility conditions
\begin{equation}\label{App_C_C}
	\left\{
	\begin{aligned}
		&\mathcal Lu_0^\delta
		-
		\nabla\bigl((\rho_0^\delta)^\gamma\bigr)
		-
		(\Delta\phi_0^\delta)\nabla\phi_0^\delta
		=
		\sqrt{\rho_0^\delta}\,g_0^\delta, &&\text{in }\Omega,\\
		&\Delta\mu_0^\delta=\rho_0^\delta h_0^\delta, &&\text{in }\Omega,\\
		&\rho_0^\delta\mu_0^\delta
		=
		-\Delta\phi_0^\delta
		+
		\rho_0^\delta\Psi'(\phi_0^\delta)
		+R^\delta, &&\text{in }\Omega,\\
		&\int \rho_0^\delta h_0^\delta\,\mathrm d x=0,
	\end{aligned}
	\right.
\end{equation}
where $g_0^\delta\in L^2, h_0^\delta\in H^1, R^\delta\in H^1$.
It is clear that, as $\delta\to0$, the perturbation $R^\delta$ disappears, and the original
system \eqref{NSCH} is recovered.

%{\begin{remark}\label{Physical}
% The positive-density approximation temporarily restores dynamics to the would-be vacuum region: since $\rho^\delta>0$ on $\overline\Omega$, the phase equation determines $\varphi_t^\delta$ everywhere, whereas in the limit system the phase equation determines the material derivative of  $\varphi$ is defined only on $\{\rho>0\}$ (cf. Remark \ref{Remark 1.1} and the discussion in Section 1). The residual $R^\delta$ measures precisely the error committed in the chemical relation by this filling, and the convergence $R^\delta\rightarrow0$ in $H^1(\Omega)$ means that this information is recovered in the limit.
%\end{remark}
%}

\section{Existence for the positive-density approximation problem}
\label{sec:positive-density}

This section states local existence and continuation for the
positive-density system \eqref{perturbed_system}-\eqref{App_C_C}.
The complete proofs, including the phase Galerkin construction, are
collected in Appendix~\ref{app:construction}.
Throughout this section, we suppress the superscript $\delta$.
The main result of this section is stated as follows.
\begin{theorem}[Positive-density local existence]\label{positive-density-thm}
Let $3<q<6$. Assume the initial data satisfy
\[
 \rho_0\in W^{1,q},\qquad 0<\underline\rho\le\rho_0,\qquad
 u_0\in H^2\cap H_0^1,\qquad \phi_0,\mu_0\in H^3,
\]
and the compatibility conditions \eqref{App_C_C}.
Then there exists $T_*>0$ and a strong solution of the positive-density approximation system \eqref{perturbed_system}-\eqref{B_C}, such that
\begin{align*}
 &\rho\in L^\infty(0,T_*;W^{1,q}),\quad
 \rho_t\in L^\infty(0,T_*;L^q),\quad
 \rho\ge\tfrac14\underline\rho,\\
 &u\in L^\infty(0,T_*;H^2\cap H_0^1)
       \cap L^2(0,T_*;W^{2,q}),\\
 &u_t\in L^\infty(0,T_*;L^2)\cap L^2(0,T_*;H_0^1),\\
 &\phi\in L^\infty(0,T_*;H^3),\quad
 \phi_t\in L^\infty(0,T_*;H^1),\\
 &\mu\in L^\infty(0,T_*;H^2)\cap L^2(0,T_*;H^3),\quad
 \mu_t\in L^2(0,T_*;H^1).
\end{align*}
Moreover, as $t\to0^+$,
\[
(\rho(t),u(t),\phi(t),\mu(t))
\rightarrow (\rho_0,u_0,\phi_0,\mu_0)
\quad\text{strongly in } W^{1,q}\times H^2\times H^3\times H^2.
\]
The time $T_*$ is allowed to depend on
$\underline\rho^{-1}$.
\end{theorem}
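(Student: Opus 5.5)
We follow the construction announced in the introduction: a semi-discrete Galerkin scheme in which only the phase variables are projected. Let $\{w_k\}$ be the Neumann eigenfunctions of $-\Delta$, and let $P_m$ be the orthogonal projection onto $X_m=\mathrm{span}\{w_1,\dots,w_m\}$. We seek $\phi_m,\mu_m\in C^1([0,T];X_m)$ together with $(\rho_m,u_m)$ solving the following system:
\begin{align*}
&\partial_t\rho_m+\operatorname{div}(\rho_m u_m)=0,\\
&\rho_m(\partial_t u_m+u_m\cdot\nabla u_m)-\mathcal L u_m+\nabla\rho_m^\gamma=[\rho_m\mu_m-\rho_m\Psi'(\phi_m)-R]\nabla\phi_m,\\
&\int\rho_m(\partial_t\phi_m+u_m\cdot\nabla\phi_m)w\,\mathrm dx=-\int\nabla\mu_m\cdot\nabla w\,\mathrm dx,\\
&\int\rho_m\mu_m w\,\mathrm dx=\int\bigl(\nabla\phi_m\cdot\nabla w+\rho_m\Psi'(\phi_m)w+Rw\bigr)\,\mathrm dx,
\end{align*}
where the last two identities hold for all $w\in X_m$. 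Since $\rho_m\ge\frac14\underline\rho$, the weighted Gram matrix $(\int\rho_m w_iw_j)$ is uniformly invertible. The chemical relation therefore expresses $\mu_m$ algebraically in terms of $(\rho_m,\phi_m)$, and the phase equation becomes an ODE system for the coefficients of $\phi_m$, with data $P_m\phi_0$.

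We solve the system for fixed $m$ by a Schauder (or contraction) fixed point. Given $(\bar u,\bar\phi)$ in a ball of $L^2(0,T;W^{2,q})\times C([0,T];X_m)$, we proceed in three steps. First, we obtain $\rho$ by characteristics and record the standard bounds $\|\rho\|_{W^{1,q}}\le\|\rho_0\|_{W^{1,q}}\exp(C\int_0^t\|\nabla\bar u\|_{W^{1,q}})$ and $\rho\ge\underline\rho\exp(-\int_0^t\|\operatorname{div}\bar u\|_\infty)\ge\frac14\underline\rho$ for small $T$. Second, we solve the finite-dimensional ODE for $\phi_m,\mu_m$. Third, we solve the linearized momentum equation $\rho(u_t+\bar u\cdot\nabla u)-\mathcal Lu=F$ using the Lamé theory for positive density, i.e. the linear lemmas of Appendix~\ref{app:linear-lemmas}. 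This gives a local solution on $[0,T_m]$.

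The essential step is to obtain bounds uniform in $m$ on a time $T_*$ depending only on $\underline\rho^{-1}$ and the data. We first derive the energy identity, obtained by testing momentum with $u_m$, phase with $\mu_m$ and chemical with $\partial_t\phi_m$. The compatible capillary force makes the coupling terms cancel exactly, leaving only $\int R\,\partial_t\phi_m$ (written as $\frac{d}{dt}\int R\phi_m$) and the term $-\int R u_m\cdot\nabla\phi_m$. Next, we differentiate the momentum and phase equations in time and test them with $u_{m,t}$ and $\phi_{m,t}$, which are admissible because $\phi_{m,t}\in X_m$. For the weighted time derivative of the chemical relation we use the weighted projection of $\phi_{m,t}$ as test function. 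This yields bounds on $\sup_t(\|\sqrt\rho u_t\|_2^2+\|\sqrt\rho\phi_t\|_2^2)+\int_0^t(\|\nabla u_t\|_2^2+\|\nabla\mu_t\|_2^2)$. We then recover $\|u\|_{H^2}$, $\|u\|_{W^{2,q}}$ from Lamé regularity, and $\|\phi\|_{H^3}$, $\|\mu\|_{H^2}$ from the Neumann problems $\Delta\mu=\rho(\phi_t+u\cdot\nabla\phi)$ and $-\Delta\phi=\rho\mu-\rho\Psi'(\phi)-R$. Here positivity of $\rho$ is used freely, since dividing by $\rho$ costs powers of $\underline\rho^{-1}$, which is allowed. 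The initial values of $\sqrt\rho u_t$ and $\sqrt\rho\phi_t$ are controlled through \eqref{App_C_C}: they equal $g_0$ and $\phi_{1,0}$, up to a projection error that is bounded via $P_m\phi_0\to\phi_0$ in $H^3$. Closing everything gives a Gronwall-type inequality $\Phi(t)\le C_0+C\int_0^t\Phi^N$, hence a time $T_*$ independent of $m$.

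We expect the main obstacle to be this time-differentiated chemical relation at the discrete level. It holds only against $X_m$, yet it must produce $\frac{d}{dt}\|\nabla\phi_m\|^2$-type control and bound $\mu_t$. We would handle it by the weighted projection $Q_m^\rho f$, defined by $\int\rho(Q_m^\rho f)w=\int fw$ for all $w\in X_m$, and by estimating the commutators with $\rho_t=-\operatorname{div}(\rho u)$ using $\rho\ge\frac14\underline\rho$. Once the uniform bounds are in place, the limit $m\to\infty$ follows from Aubin--Lions compactness. The $\rho$ equation passes to the limit by strong convergence of $u_m$ in $L^2(0,T;H^1)$. Strong continuity at $t=0$ in $W^{1,q}\times H^2\times H^3\times H^2$ follows from weak continuity plus the energy-type upper bounds, together with the limit of the norms as $t\to0^+$ (lower semicontinuity and the matching $\limsup$).
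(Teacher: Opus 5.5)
The overall architecture is the paper's: phase-only Galerkin with the compatible capillary force, a fixed point at fixed $m$, $m$-uniform bounds on an interval depending on $\underline\rho^{-1}$, then compactness. But the $m$-uniform estimates you list do not reach the regularity $\phi_t\in L^\infty(0,T_*;H^1)$, $\mu_t\in L^2(0,T_*;H^1)$ that the theorem asserts, and the step that does is missing.

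\textbf{Missing higher-order phase estimate.} You test the differentiated phase equation by $\phi_t$ and rewrite $(\nabla\mu_t,\nabla\phi_t)$ through the differentiated chemical relation tested by $\mu_t$. This gives only $\frac{\mathrm d}{\mathrm dt}\|\sqrt\rho\phi_t\|_{L^2}^2+\|\sqrt\rho\mu_t\|_{L^2}^2$ on the left; it is exactly \eqref{rho_phit_identity_joint}. It produces neither the $\int_0^t\|\nabla\mu_t\|_{L^2}^2$ you claim nor $\sup_t\|\nabla\phi_t\|_{L^2}$.

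To obtain these, you must test the differentiated phase equation by $\mu_t$ and the differentiated chemical relation by $\phi_{tt}$. Both lie in $X_m$, so the discreteness of the chemical relation is not the obstruction. This step generates $\int\phi_{tt}\rho_t[\mu-F(\phi)]\,\mathrm dx$ and $-\int\phi_{tt}\rho F'(\phi)\phi_t\,\mathrm dx$. These cannot be bounded directly, even with $\rho\ge\frac14\underline\rho$:
\begin{itemize}
\item $\phi_{tt}$ is controlled at best in $(H^1)'$, through $\nabla\mu_t$;
\item $\rho_t[\mu-F(\phi)]$ is in general not in $H^1$, since $\rho_t\in L^q$ only.
\end{itemize}
The missing idea is to substitute $\rho_t=-\operatorname{div}(\rho u)$ and integrate by parts in space and then in time. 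One then works with the corrected energy $\|\nabla\phi_t\|_{L^2}^2-\mathcal B(t)$, with $\mathcal B$ as in \eqref{def_phase_correction}. Its coercivity is restored by adding $\kappa>2$ times the $\|\sqrt\rho\phi_t\|_{L^2}^2$ inequality (Lemmas~\ref{Lemma_phase_gradient_modified} and \ref{Lemma_phase_high_joint}, reused verbatim in Part V of the appendix). Your remark about commutators with $\rho_t=-\operatorname{div}(\rho u)$ points this way, but you place the main obstacle in the wrong step.

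\textbf{Uniform $H^1$ bound on $\phi_{m,t}(0)$.} Once that estimate is included, you need $\sup_m\|\phi_{m,t}(0)\|_{H^1}<\infty$, not merely a bound on $\sqrt{\rho_0}\,\phi_{m,t}(0)$. Your initial-data claim does not deliver it. With fixed $R$ and $\phi_m(0)=P_m\phi_0$ (the paper's $\phi_{0m}=Q_m\phi_0$), the discrete chemical relation forces
$\mu_m(0)=\mathcal P_m^{\rho_0}[\rho_0^{-1}(-\Delta\phi_{0m}+\rho_0F(\phi_{0m})+R)]$, a weighted projection of a function that is only in $H^1$. Then
$\phi_{m,t}(0)=\mathcal P_m^{\rho_0}[\rho_0^{-1}\Delta\mu_m(0)-u_0\cdot\nabla\phi_{0m}]$. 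An $H^1$ bound for this via Lemma~\ref{app:weighted-projection-lemma} requires $\mu_m(0)$ bounded in $H^3$. That does not follow from $P_m\phi_0\to\phi_0$ in $H^3$, because the lemma gives only $H^1$-stability of the weighted projection.

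The paper avoids this by adjusting the residual in \eqref{app:initial-phase-projection}: $\mu_{0m}=Q_m\mu_0+b_m$ and $R_m=\rho_0\mu_{0m}+\Delta\phi_{0m}-\rho_0F(\phi_{0m})$. Then the discrete chemical relation holds pointwise at $t=0$, $h_{0m}=\rho_0^{-1}\Delta\mu_{0m}\to h_0$ in $H^1$, and \eqref{app:discrete-phit-initial} gives the uniform bound.

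Your fixed-$R$ choice can be rescued, but you must supply the argument. Since $Q_m\Delta\phi_0=\Delta Q_m\phi_0$, one gets $\mu_m(0)=\mathcal P_m^{\rho_0}[\mu_0+F(\phi_{0m})-F(\phi_0)]$. An inverse inequality on $X_m$, combined with the $H^1$ approximation rate of $Q_m$ on $H^3$ functions with zero normal derivative, then bounds this in $H^3$.

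\textbf{Fixed-point ball.} To invoke Lemma~\ref{app:lame-parabolic-lemma} at fixed $m$, your ball must also control $\bar u_t$, as the paper's $\mathcal V_T$ does. The lemma needs $f_t\in L^2(0,T;H^{-1})$, and it has no convection term on the left-hand side.
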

\begin{proof}
The complete proof is given in Appendix~\ref{app:construction}.
Only the phase variables $(\phi_m,\mu_m)$ are Galerkin truncated;
the density and velocity remain in their full spatial function spaces
on $\Omega$ and solve the transport and momentum equations without
spatial projection. The fixed-dimensional construction, the estimates
of Lemma~\ref{app:positive-density-semidiscrete-estimate}, and the
limit in Subsection~\ref{subsec:positive-step-limit} give the asserted
solution and strong initial traces. The existence time at this stage
may depend on $\underline\rho^{-1}$.
\end{proof}

The following continuation criterion ensures that a positive-density
solution cannot cease to exist while its density remains bounded away
from zero and the relevant norms remain bounded, thereby allowing us
to turn the uniform a priori estimates into a common lifespan for
the approximate solutions.

\begin{proposition}[Continuation criterion at positive density]
	\label{positive-density-continuation}
	Let $(\rho,u,\phi,\mu)$ be a strong solution furnished by
	Theorem~\ref{positive-density-thm} on $[0,T^*)$,
	where $T^*<\infty$.
	Assume that, for some $c_*>0$ and $C_*<\infty$,
	\begin{equation}
		\inf_{\Omega\times(0,T^*)}\rho\ge c_*,
		\label{positive-continuation-assumptions}
	\end{equation}
	and
	\begin{equation}
		\begin{aligned}
			\sup_{0<t<T^*}\Bigl(
			\|\rho(t)\|_{W^{1,q}}
			+\|u(t)\|_{H^2}
			+\|u_t(t)\|_{L^2}
			+\|\phi(t)\|_{H^3}
			+\|\phi_t(t)\|_{H^1}
			+\|\mu(t)\|_{H^2}
			\Bigr)\le C_*.
		\end{aligned}
		\label{positive-continuation-bounds}
	\end{equation}
	Then the solution extends as a positive-density strong
	solution beyond $T^*$.
\end{proposition}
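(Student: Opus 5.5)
The plan is the standard restart argument: choose a time $t_0<T^*$ close to $T^*$, take $(\rho,u,\phi,\mu)(t_0)$ as new initial data, and apply Theorem~\ref{positive-density-thm}. The point is to obtain a restart time $T_\sharp>0$ that depends only on $c_*$, $C_*$ and the fixed quantities $\|R\|_{H^1}$, $\Omega$, $q$, $\gamma$, $\lambda$, $\eta$, and not on $t_0$. Choosing $t_0>T^*-T_\sharp$ then extends the solution past $T^*$.

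\emph{Step 1: the data at $t_0$ are admissible.} Set
\[
(\rho_1,u_1,\phi_1,\mu_1)=(\rho,u,\phi,\mu)(t_0).
\]
By \eqref{positive-continuation-assumptions}, $\rho_1\ge c_*=:\underline\rho$. By \eqref{positive-continuation-bounds}, $\rho_1\in W^{1,q}$, $u_1\in H^2\cap H^1_0$ and $\phi_1\in H^3$, all with norms bounded by $C_*$.

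The requirement $\mu_1\in H^3$ is not among the listed bounds. We recover it from the equation $\Delta\mu=\rho(\phi_t+u\cdot\nabla\phi)$ with Neumann boundary conditions. Since $\rho\in W^{1,q}$, $\phi_t\in H^1$ and $u\cdot\nabla\phi\in H^2$, the right-hand side lies in $H^1$ by the product estimate \eqref{W1q-H1-product}. Neumann elliptic regularity then gives $\|\mu(t)\|_{H^3}\le C(C_*)$ for a.e.\ $t$. We pick $t_0$ among such times, which also have well-defined traces by the time continuity of the solution.

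\emph{Step 2: compatibility conditions \eqref{App_C_C} at $t_0$.} Because $\rho_1\ge c_*>0$, we may simply define
\[
g_1:=\rho_1^{-1/2}\bigl(\mathcal Lu_1-\nabla\rho_1^\gamma-\Delta\phi_1\nabla\phi_1\bigr)=\sqrt{\rho_1}\,(u_t+u\cdot\nabla u)(t_0),
\]
and this satisfies $\|g_1\|_{L^2}\le C(c_*,C_*)$ using $\|u_t\|_{L^2}\le C_*$. Likewise set $h_1:=\rho_1^{-1}\Delta\mu_1=\phi_t+u\cdot\nabla\phi$ at $t_0$, which is in $H^1$ with bound depending on $c_*$ and $C_*$. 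Integrating $\Delta\mu_1$ with $\partial_n\mu_1=0$ gives $\int\rho_1h_1=0$. The chemical relation with the same residual $R$ holds pointwise at $t_0$ since the equations hold a.e.; we use the strong continuity of the solution to pass to the exact time $t_0$.

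\emph{Step 3: uniform restart time.} This is the main obstacle. The existence time $T_*$ in Theorem~\ref{positive-density-thm} is produced in Appendix~\ref{app:construction} by closing the semi-discrete estimates, and it depends only on $\underline\rho^{-1}$ and on the norms of the data: $\|\rho_0\|_{W^{1,q}}$, $\|u_0\|_{H^2}$, $\|\phi_0\|_{H^3}$, $\|\mu_0\|_{H^3}$, $\|g_0\|_{L^2}$, $\|h_0\|_{H^1}$ and $\|R\|_{H^1}$. I would re-read that proof to confirm this monotone dependence, since the theorem statement only records the dependence on $\underline\rho^{-1}$. Once it is confirmed, Steps 1--2 bound all of these quantities uniformly in $t_0$, which yields a uniform $T_\sharp$.

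\emph{Step 4: gluing and uniqueness.} Glue the restarted solution to the original one at $t_0$. The regularity classes match, and the strong initial trace in Theorem~\ref{positive-density-thm} ensures continuity at $t_0$. The glued object is a positive-density strong solution on $[0,t_0+T_\sharp)$ with $t_0+T_\sharp>T^*$. It agrees with the original solution on $[t_0,T^*)$ by uniqueness at positive density, which follows from a standard difference-energy estimate that is easier than the vacuum case since $\rho\ge c_*/4$. If uniqueness is not yet available at this point, the gluing alone still gives a continuation, which is all the proposition requires.
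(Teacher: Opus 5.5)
Your proposal is correct and follows the paper's proof essentially step for step: you recover $\|\mu(t)\|_{H^3}\le C(c_*^{-1},C_*)$ at a.e.\ $t$ from the Neumann problem $\Delta\mu=\rho(\phi_t+u\cdot\nabla\phi)$, read off the compatibility data and the chemical relation with the same $R$ at such a restart time, invoke the $m$-uniform lifespan of Subsection~\ref{subsec:positive-step-uniform} (which indeed depends only on $\underline\rho^{-1}$ and the bound $B_*$ on the data), and glue using uniqueness on the overlap. One small correction: your fallback remark is not valid, since without uniqueness the glued object agrees with the given solution only up to $t_0$ and so is not an extension of it on $[t_0,T^*)$; this does not affect the argument, because uniqueness for positive-density solutions with a common residual is available from Subsection~\ref{sec:uniqueness}, exactly as the paper uses it.
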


\begin{proof}
The restarting argument is given in
Subsection~\ref{app:continuation-proof}, using the local construction
and its common lifespan at fixed positive density.
\end{proof}

\section{A priori estimates independent of the density lower bound}
\label{sec:apriori}

In this section, we derive a priori estimates, uniform as $\delta\to0$, for the
positive-density solutions constructed in
Section~\ref{sec:positive-density}.
Throughout this section, we suppress the superscript $\delta$
and write $R=R^\delta$, with
$\int  R\,\mathrm dx=0$ by \eqref{R-delta-zero-mean}.
The resulting bounds are independent of the density lower bound.
For simplicity, assume $0<T\le1$.  We consider the quantity
\begin{equation}\label{def_Phi}
\begin{aligned}
\Phi(t):=\sup_{0\le s\le t}\bigg(
&1+\|\rho(s)\|_{W^{1,q}}
+\|\rho(s)\|_{H^1}
+\|R\|_{H^1}^2
+\|\nabla u(s)\|_{L^2}^2
+\|\sqrt{\rho}\,u_t(s)\|_{L^2}^2
\\
&+\|\sqrt{\rho}\,\mu(s)\|_{L^2}^2
+\|\phi(s)\|_{H^3}^2
+\|\sqrt{\rho}\,\phi_t(s)\|_{L^2}^2
+\|\mu(s)\|_{H^2}^2
+\|\nabla\phi_t(s)\|_{L^2}^2
\bigg),
\end{aligned}
\end{equation}
where $3<q<6$ and $0<t<T$.

We record the following preliminary inequality, whose constant does
not require a positive pointwise lower bound for the density.
\begin{lemma}[{\rm cf. \cite{F03}}]\label{Lemma_F03}
	Let $v \in H^1(\Omega)$ and let $\Omega \subset \mathbb{R}^3$ be a
	bounded connected domain. If a non-negative function $\rho$ satisfies
	\[
	0<C_0\leq\int  \rho \mathrm d x \text{ and } \int  \rho^\gamma \mathrm d x \leq E_0
	\]
	for some $\gamma>1$, then there exists a constant
	$C=C(\Omega,\gamma,C_0,E_0)$ such that
	\[
	\| v \|_{L^2}^2 \leq C\left(\|\nabla v\|_{L^2}^2+\left(\int  \rho|v| \mathrm d x \right)^2\right).
	\]
\end{lemma}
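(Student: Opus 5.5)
This is a standard Poincaré-type inequality, and I will prove it by contradiction combined with compactness.

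Suppose the conclusion fails. Then there are sequences $v_k\in H^1$ and nonnegative $\rho_k$ satisfying $\int\rho_k\ge C_0$ and $\int\rho_k^\gamma\le E_0$, such that
\[
\|v_k\|_{L^2}^2> k\Bigl(\|\nabla v_k\|_{L^2}^2+\Bigl(\int\rho_k|v_k|\Bigr)^2\Bigr).
\]
The inequality is homogeneous in $v$, so I normalize $\|v_k\|_{L^2}=1$. This gives $\|\nabla v_k\|_{L^2}\to0$ and $\int\rho_k|v_k|\to0$.

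The sequence $v_k$ is bounded in $H^1$. By Rellich compactness on the bounded (smooth or Lipschitz) domain, and using the Sobolev embedding $H^1\hookrightarrow L^6$ with compact embedding into $L^p$ for every $p<6$, a subsequence converges strongly in $L^p$ for all $p<6$ to some $v$ with $\nabla v=0$. Since $\Omega$ is connected, $v$ is a constant $c$, and the normalization gives $|c|\,|\Omega|^{1/2}=1$, so $c\neq0$.

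For the density, $\rho_k$ is bounded in $L^\gamma$ with $\gamma>1$. Passing to a further subsequence, $\rho_k\rightharpoonup\rho$ weakly in $L^\gamma$, with $\rho\ge0$. Testing against the constant $1\in L^{\gamma'}$ gives $\int\rho\ge C_0>0$.

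It remains to pass to the limit in $\int\rho_k|v_k|$. Let $\gamma'=\gamma/(\gamma-1)$. I would like $|v_k|\to|c|$ strongly in $L^{\gamma'}$, but this only holds if $\gamma'<6$, i.e.\ $\gamma>6/5$. I expect this integrability bookkeeping to be the one step needing care. For general $\gamma>1$, where $\gamma'$ may be large, I avoid it by truncation: $\min(|v_k|,M)$ is bounded and converges strongly in every $L^p$ with $p<\infty$, and
\[
\int\rho_k|v_k|\ge\int\rho_k\min(|v_k|,M).
\]
In either version, weak–strong convergence yields
\[
0=\lim_{k\to\infty}\int\rho_k|v_k|\ \ge\ \int\rho\,\min(|c|,M)\ \ge\ \min(|c|,M)\,C_0>0
\]
once $M>0$ is fixed. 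This is a contradiction, so the inequality holds with a constant depending only on $\Omega,\gamma,C_0,E_0$.

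A constructive alternative is also available. Write $v=\bar v+w$ with $\bar v$ the mean of $v$, so that $\|w\|_{L^2}+\|w\|_{L^6}\le C\|\nabla v\|_{L^2}$. Then
\[
|\bar v|\,C_0\le\int\rho|\bar v|\le\int\rho|v|+\int\rho|w|,
\]
and Hölder gives $\int\rho|w|\le\|\rho\|_{L^{6/5}}\|w\|_{L^6}$. This handles the case $\gamma\ge6/5$ directly. For $1<\gamma<6/5$, the same estimate is done after truncating $\rho$ at a level chosen using $C_0$ and $E_0$. I would nevertheless present the compactness argument, because it avoids this case split.
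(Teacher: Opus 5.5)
The paper gives no proof of this lemma; it only cites Feireisl's monograph \cite{F03}, so there is no in-paper argument to compare against. Your contradiction--compactness argument is correct, and it is the standard proof of this weighted Poincar\'e inequality. The steps that need checking all hold.
\begin{itemize}
\item The normalization $\|v_k\|_{L^2}=1$ is legitimate because the inequality is $2$-homogeneous in $v$.
\item Connectedness identifies the strong $L^2$ limit as a nonzero constant.
\item The weak $L^\gamma$ limit of $\rho_k$ keeps $\int\rho\ge C_0$.
\item The truncation $\min(|v_k|,M)$ is bounded and converges in $L^2$, hence in every $L^p$ with $p<\infty$. It therefore pairs with the weak limit of $\rho_k$ even when $\gamma'=\gamma/(\gamma-1)>6$.
\end{itemize}
You are also right to assume Lipschitz (here smooth) regularity of $\Omega$. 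For a merely bounded connected domain, Rellich's theorem can fail, and so can the inequality itself; rooms-and-passages domains give counterexamples. The lemma should therefore be read under the paper's standing assumption that $\Omega$ is smooth.

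Your constructive alternative needs no case split. Set $K:=(2E_0/C_0)^{1/(\gamma-1)}$. Then
\[
\int_{\{\rho>K\}}\rho\,\mathrm dx\le K^{1-\gamma}E_0=\tfrac{C_0}{2},
\]
so $\rho_K:=\min(\rho,K)$ satisfies $\int\rho_K\ge C_0/2$ and $\rho_K\le K$. Hence, for every $\gamma>1$,
\[
\tfrac{C_0}{2}\,|\bar v|\le\int\rho_K|\bar v|\,\mathrm dx\le\int\rho|v|\,\mathrm dx+K|\Omega|^{1/2}\|v-\bar v\|_{L^2}.
\]
Together with the Poincar\'e--Wirtinger inequality, this proves the lemma with an explicit constant in terms of $C_0$, $E_0$, $\gamma$ and the Poincar\'e constant of $\Omega$. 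The compactness route is shorter but gives no quantitative dependence of $C$ on these quantities. It does not actually buy anything in avoiding the case split, since truncation alone already covers all $\gamma>1$.
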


Throughout this section, $C$ denotes a generic positive constant depending
only on $\Omega$, the physical parameters, and the indicated norms of the
initial data.  Since $0<T\le1$, all constants can be chosen independently of
$T$ and of the positive lower bound of the density.  Let $\mathcal F$ denote
the class of nondecreasing locally
bounded functions $M:[0,\infty)\to[1,\infty)$,
which are independent of $\delta$.  The exact form of $M\in\mathcal F$ may
vary from line to line. For simplicity, we denote $F(\phi):=\Psi'(\phi)=\phi^3-\phi$ and	$F'(\phi)=3\phi^2-1$.

The main perturbed estimate is as follows.
\begin{proposition}[A priori estimates for the perturbed problem]\label{apriori-prop}
Let $\Omega\subset\mathbb R^3$ be a bounded smooth connected domain and let $3<q<6$.
Assume that $(\rho,u,\phi,\mu)$ is a sufficiently smooth solution to
\eqref{perturbed_system} with the boundary conditions stated in
Section~\ref{sec:perturbed-problem}.  Assume also that
$\int R\,\mathrm dx=0$ and
\[
 \mathcal Lu_0-\nabla\rho_0^\gamma-(\Delta\phi_0)\nabla\phi_0
 =\sqrt{\rho_0}\,g_0,
 \qquad
 \Delta\mu_0=\rho_0h_0,
\]
where $g_0\in L^2$, $h_0\in H^1$, and
$\int\rho_0h_0=0$.  Set
\begin{equation}\label{def_initial_M}
	\begin{aligned}
\mathcal M_0:=1
&+\|\rho_0\|_{W^{1,q}}^2
+\|\rho_0\|_{H^1}^2
+\|R\|_{H^1}^2
+\|u_0\|_{H^2}^2
\\
&+\|\phi_0\|_{H^3}^2
+\|\mu_0\|_{H^2}^2
+\|g_0\|_{L^2}^2
+\|h_0\|_{H^1}^2 .
\end{aligned}
\end{equation}
Then there exists a nondecreasing function $M\in\mathcal F$ such that, for
any $0<t<T$,
\begin{equation}\label{apriori-est}
\Phi(t)+\int_0^t\mathcal D(s)\,\mathrm d s
\le
M\left(
\mathcal M_0+
\int_0^t M(\Phi(s))\,\mathrm d s
\right),
\end{equation}
where
\begin{equation}\label{def_D}
\begin{aligned}
\mathcal D(t):={}&
\|\nabla u_t(t)\|_{L^2}^2
+\|\nabla^2u(t)\|_{L^q}
+\|\nabla u(t)\|_{L^\infty}
\\
&+\|\nabla\phi_t(t)\|_{L^2}^2
+\|\nabla\mu_t(t)\|_{L^2}^2
+\|\sqrt\rho\,\mu_t(t)\|_{L^2}^2
+\|\sqrt\rho\,\phi_t(t)\|_{L^2}^2 .
\end{aligned}
\end{equation}
\end{proposition}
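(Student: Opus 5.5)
The estimate \eqref{apriori-est} will be obtained by a chain of energy-type bounds. Each bound controls one group of terms in $\Phi$ and $\mathcal D$ by $\mathcal M_0$ plus a time integral of a polynomial in $\Phi$. Summing the bounds and composing the resulting nondecreasing functions then gives the asserted form.

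\textbf{Step 1: basic energy.} Test \eqref{perturbed_system}$_2$ with $u$ and \eqref{perturbed_system}$_3$ with $\mu$. Use \eqref{perturbed_system}$_4$ tested with $\phi_t+u\cdot\nabla\phi$ to cancel the capillary force. This produces the modified energy
\[
\int\Big(\tfrac12\rho|u|^2+\tfrac{\rho^\gamma}{\gamma-1}+\tfrac12|\nabla\phi|^2+\rho\Psi(\phi)-R\phi\Big)
\]
together with the dissipation $\|\nabla u\|_{L^2}^2+\|\nabla\mu\|_{L^2}^2$. The error term is $-\int Ru\cdot\nabla\phi$.
\begin{itemize}
\item Since $\int R=0$, Poincar\'e gives $|\int R\phi|\le C\|R\|_{L^2}\|\nabla\phi\|_{L^2}$.
\item The error term is absorbed by $\|R\|_{H^1}\|u\|_{L^6}\|\nabla\phi\|_{L^3}$ and Gronwall.
\item Lemma~\ref{Lemma_F03}, applied with $v=\mu$ and using $\int\rho\mu=\int\rho\Psi'(\phi)$ (this follows from $\int R=0$), converts $\|\nabla\mu\|_{L^2}$ into $\|\mu\|_{H^1}$.
\end{itemize}

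\textbf{Step 2: density and elliptic bounds.} Differentiate the transport equation and use characteristics. This gives $\|\rho\|_{W^{1,q}}\le C\exp(\int_0^t\|\nabla u\|_{W^{1,q}})(\|\rho_0\|_{W^{1,q}}+\dots)$, and hence a bound by $\mathcal M_0\exp(\int_0^t M(\Phi))$ after Young's inequality with $\mathcal D$. Similarly $\|\rho_t\|_{L^2}\le C\Phi^2$.

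The Lam\'e estimate gives
\[
\|u\|_{H^2}\le C\big(\|\sqrt\rho u_t\|\Phi^{1/2}+\Phi^{2}\big),
\]
and the $W^{2,q}$ bound of $u$ is controlled by $\|\nabla u_t\|_{L^2}$ plus powers of $\Phi$. For $\|\nabla u\|_{L^\infty}$ use $W^{2,q}\hookrightarrow W^{1,\infty}$; this term appears linearly in $\mathcal D$, which is why it can be absorbed.

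For the phase, the Neumann problem $-\Delta\phi=\rho\mu-\rho F(\phi)-R$ yields
\[
\|\phi\|_{H^3}\le C\big(\|\rho\mu\|_{H^1}+\|\rho F(\phi)\|_{H^1}+\|R\|_{H^1}+\|\phi\|_{L^2}\big).
\]
Likewise $\Delta\mu=\rho(\phi_t+u\cdot\nabla\phi)$ gives $\|\mu\|_{H^2}\le C\big(\|\rho\|_{L^\infty}^{1/2}\|\sqrt\rho\phi_t\|+\Phi^2+\|\mu\|_{H^1}\big)$. The weighted norm $\sqrt\rho\mu$ follows from the energy bound.

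\textbf{Step 3: time-differentiated estimates, the main step.}

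For the velocity, differentiate \eqref{perturbed_system}$_2$ in $t$ and test with $u_t$, in the manner of Cho--Kim. This bounds
\[
\tfrac{d}{dt}\|\sqrt\rho u_t\|^2+\|\nabla u_t\|^2
\]
by $M(\Phi)$ plus the capillary term $\int(\Delta\phi\nabla\phi)_t\cdot u_t$. That term requires $\|\nabla\phi_t\|_{L^2}$ and $\|\Delta\phi_t\|$; the latter will be rewritten through the differentiated chemical relation.

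For the phase, differentiate \eqref{perturbed_system}$_4$ in time (no $R_t$ term appears) and \eqref{perturbed_system}$_3$ in time.
\begin{itemize}
\item Test the time-differentiated phase equation with $\mu_t$.
\item Test the time-differentiated chemical relation $\rho_t\mu+\rho\mu_t=-\Delta\phi_t+(\rho F(\phi))_t$ with $\phi_t$ and $\rho\phi_t$ combinations.
\item The cross terms $\int\rho\mu_t\phi_t$ cancel, exactly as in Step 1. This gives control of
\[
\tfrac{d}{dt}\big(\|\sqrt\rho\phi_t\|^2+\|\nabla\phi_t\|^2\big)+\|\nabla\mu_t\|^2+\|\sqrt\rho\mu_t\|^2.
\]
\end{itemize}
The terms containing $\rho_t=-\operatorname{div}(\rho u)$ are handled by integration by parts, so that no negative powers of $\rho$ appear.

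Initial values are estimated by the compatibility conditions. From $\sqrt{\rho_0}u_t(0)=g_0-\sqrt{\rho_0}u_0\cdot\nabla u_0$ we obtain $\|\sqrt{\rho_0}u_t(0)\|\le C\mathcal M_0$. From $\phi_t(0)=h_0-u_0\cdot\nabla\phi_0$ (see Remark~\ref{Remark 1.1}) we obtain the $H^1$ bound, uniformly in $\delta$.

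The main obstacle I expect is closing this coupled $(u_t,\phi_t,\mu_t)$ system without a density lower bound. Since $\mu_t$ is only weighted-controlled, I will use Lemma~\ref{Lemma_F03} again on $\mu_t$, with $\int\rho\mu_t$ computed from the differentiated chemical relation, to recover $\|\mu_t\|_{H^1}$. Finally, summing all the bounds, integrating in time, and expressing every $\mathcal D$-term on the right via Young's inequality in terms of $M(\Phi)$ yields \eqref{apriori-est}.
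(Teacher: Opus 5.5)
\textbf{Gap: Step~3 does not produce the top-order phase estimate.}

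Your Steps 1--2 and your treatment of the initial data follow the paper. The failure is in Step~3, at the top-order phase estimate.

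\emph{The cancellation you invoke is one level too low.} Testing the phase equation by $\mu_t$ and the differentiated chemical relation by $\phi_t$ does cancel $\int\rho\mu_t\phi_t$. But what it produces is $\frac12\frac{\mathrm d}{\mathrm dt}\|\nabla\mu\|_{L^2}^2+\|\nabla\phi_t\|_{L^2}^2$; this is Lemma~\ref{Lemma_mu_phi_t}, not the bound you claim.

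\emph{The correct test creates uncontrolled terms.} To get $\sup_t\|\nabla\phi_t\|_{L^2}^2$ and $\int_0^t\|\nabla\mu_t\|_{L^2}^2$, you must test the differentiated phase equation by $\mu_t$ and pair the differentiated chemical relation with $\phi_{tt}$. After $\int\rho\phi_{tt}\mu_t$ cancels, the remainder contains
\[
\int\phi_{tt}\,\rho_t\bigl[\mu-(\phi^3-\phi)\bigr]\,\mathrm dx-\int\phi_{tt}\,\rho\,(3\phi^2-1)\phi_t\,\mathrm dx .
\]
\begin{itemize}
\item Nothing in $\Phi$ or $\mathcal D$ controls $\phi_{tt}$.
\item $\phi_{tt}$ cannot be recovered from the equation without dividing by $\rho$, which would destroy uniformity in $\delta$.
\item Your remedy, integrating the $\rho_t$ terms by parts in space, does not help. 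Inserting $\rho_t=-\operatorname{div}(\rho u)$ only moves the derivative onto $\phi_{tt}$, producing $\nabla\phi_{tt}$.
\item The second term contains no $\rho_t$ at all.
\end{itemize}

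\textbf{The missing idea: integration by parts in time, and coercivity of a corrected energy.}

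After the spatial integration by parts, write both terms as $\frac12\frac{\mathrm d}{\mathrm dt}\mathcal B(t)$ plus remainders with at most first time derivatives. For the second term use $\phi_{tt}\phi_t=\frac12\partial_t(\phi_t^2)$. The remainders involving $\mu_t$ and $\nabla\mu_t$ are absorbed into $\varepsilon\|\nabla\mu_t\|_{L^2}^2$, which is where your weighted Poincar\'e bound for $\mu_t$ is genuinely used.

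You must then work with the corrected energy $\|\nabla\phi_t\|_{L^2}^2-\mathcal B(t)+\kappa\|\sqrt\rho\,\phi_t\|_{L^2}^2$, with $\mathcal B$ as in \eqref{def_phase_correction}, and prove it is coercive.
\begin{itemize}
\item The term $\int\rho(3\phi^2-1)\phi_t^2$ is handled by $3\phi^2-1\ge-1$ and $\kappa$ large. This is where your $\sqrt\rho\,\phi_t$/$\sqrt\rho\,\mu_t$ estimate enters.
\item The two cross terms are absorbed into $\varepsilon(\|\nabla\phi_t\|_{L^2}^2+\|\sqrt\rho\,\phi_t\|_{L^2}^2)$ plus lower-order quantities: $\|\mu\|_{H^1}$, $\|\phi\|_{H^2}$, $\|u\|_{H^1}$ and $\|\rho\|_{L^\infty}$.
\end{itemize}

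For \eqref{apriori-est} to close, those lower-order quantities must be bounded pointwise in time by $M(\mathcal M_0+\int_0^tM(\Phi(s))\,\mathrm ds)$. A bound by $M(\Phi(t))$ is not enough, since it only gives $\Phi(t)\le\cdots+M(\Phi(t))$.

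Your plan does not supply these bounds:
\begin{itemize}
\item The energy identity gives only $\int_0^t\|\mu\|_{H^1}^2$.
\item $\sup_t\|\sqrt\rho\,\mu\|_{L^2}$ does not follow from the energy identity, contrary to what you assert in Step~2.
\end{itemize}
The paper obtains them beforehand from separate differential identities for $\|\sqrt\rho\,\mu\|_{L^2}^2$ and $\|\nabla\mu\|_{L^2}^2$.

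\textbf{Minor points.} The residual enters the energy as $+\int R\phi$, not $-\int R\phi$; this is harmless after Poincar\'e. In the $u_t$ estimate, the paper avoids $\Delta\phi_t$ by writing the capillary force in divergence form and integrating by parts, so only $\|\nabla\phi_t\|_{L^2}$ appears.
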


\begin{remark}\label{rem:apriori-closure}
As a consequence of Proposition~\ref{apriori-prop}, the a priori bound closes
on a sufficiently small time interval.  Indeed, set
\[
 K:=2M(\mathcal M_0+1).
\]
As long as $\Phi\le K$, choose $T_0>0$ such that
$T_0M(K)\le1$.  Then $\mathcal M_0+\int_0^tM(\Phi(s))\,\mathrm d s\le\mathcal M_0+1$ and
$\Phi(t)\le M(\mathcal M_0+1)=K/2$ for $0\le t\le T_0$.
The usual first-exit argument therefore gives
\[
\Phi(t)+\int_0^t\mathcal D(s)\,\mathrm d s
\le M(\mathcal M_0+1),
\qquad 0\le t\le T_0.
\]
\end{remark}

The proof of Proposition~\ref{apriori-prop} is based on the following
estimates. The only structural novelty, compared with the unperturbed NSCH
system \eqref{NSCH}, is the fixed coefficient $R$ in the chemical relation. Define
\begin{equation}\label{def_initial_energy}
\mathcal E_0
:=
\int
\left(
\frac12\rho_0|u_0|^2
+\frac{\rho_0^\gamma}{\gamma-1}
+\frac12|\nabla\phi_0|^2
+\rho_0\Psi(\phi_0)
+R\phi_0
\right)\,\mathrm d x .
\end{equation}
Since $R$ is independent of time, the definition of $\mathcal M_0$ gives
	\begin{equation}\label{fixed_residual_H1}
		\|R\|_{L^\infty(0,T;H^1(\Omega))}=\|R\|_{H^1(\Omega)}
		\le \mathcal M_0^{1/2}.
	\end{equation}

\subsection{Part I: Basic energy and the $H^1$ bound for $\phi$}
\begin{lemma}\label{Lemma_energy}
Let $(\rho,u,\phi,\mu)$ be a smooth solution to
\eqref{perturbed_system} with the boundary conditions stated in
Section~\ref{sec:perturbed-problem}, and let $R\in H^1(\Omega)$ be the
prescribed time-independent residual.  Suppose
$\int R\,\mathrm dx=0$.  Set
\[
\mathcal E(t):=\widetilde{\mathcal E}(t)+\int R\phi(t)\,\mathrm d x,
\]
where
\[
\widetilde{\mathcal E}(t):=\int\left(
\frac12\rho |u|^2+\frac{\rho^\gamma}{\gamma-1}
+\frac12|\nabla\phi|^2+\rho\Psi(\phi)\right)\,\mathrm d x.
\]
Then
\begin{equation}\label{total-energy}
\begin{aligned}
\mathcal E(t)+\int_0^t\int\left(
\lambda|\nabla u|^2+(\lambda+\eta)|\operatorname{div}u|^2
+|\nabla\mu|^2\right)\,\mathrm d x\,\mathrm d s
=\mathcal E_0-
\int_0^t\int R u\cdot\nabla\phi\,\mathrm d x\,\mathrm d s.
\end{aligned}
\end{equation}
Consequently, for $0\le t\le T\le1$,
\begin{equation}\label{total-energy-M}
\widetilde{\mathcal E}(t)+\int_0^t\left(
\|\nabla u(s)\|_{L^2}^2+\|\nabla\mu(s)\|_{L^2}^2
\right)\,\mathrm d s
\le M(\mathcal M_0),
\end{equation}
where $M$ is independent of the positive lower bound of $\rho$.
\end{lemma}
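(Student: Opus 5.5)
We first derive the identity \eqref{total-energy}, then pass to \eqref{total-energy-M} using Lemma~\ref{Lemma_F03}, the Poincar\'e inequality, and Gronwall's inequality.

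\emph{The identity.} Test the momentum equation \eqref{perturbed_system}$_2$ with $u$. Since $u=0$ on $\partial\Omega$, the continuity equation gives
\[
\int\rho(u_t+u\cdot\nabla u)\cdot u=\frac{\mathrm d}{\mathrm dt}\int\tfrac12\rho|u|^2 .
\]
Likewise
\[
\int\nabla\rho^\gamma\cdot u=-\int\rho^\gamma\operatorname{div}u=\frac{\mathrm d}{\mathrm dt}\int\frac{\rho^\gamma}{\gamma-1},
\]
and $-\int\mathcal Lu\cdot u$ produces the viscous dissipation. The capillary term contributes $-\int\Delta\phi\,\nabla\phi\cdot u$.

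Next test the phase equation \eqref{perturbed_system}$_3$ with $\mu$. Using $\partial_n\mu=0$, this gives
\[
\int\rho(\phi_t+u\cdot\nabla\phi)\mu=-\int|\nabla\mu|^2 .
\]
Substitute $\rho\mu=-\Delta\phi+\rho F(\phi)+R$. The term $-\int\Delta\phi\,\phi_t$ equals $\frac{\mathrm d}{\mathrm dt}\frac12\|\nabla\phi\|_{L^2}^2$, by $\partial_n\phi=0$. The term $\int\rho F(\phi)(\phi_t+u\cdot\nabla\phi)$ equals $\frac{\mathrm d}{\mathrm dt}\int\rho\Psi(\phi)$, by the continuity equation again. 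The term $\int R\phi_t$ equals $\frac{\mathrm d}{\mathrm dt}\int R\phi$, because $R$ is time-independent. The remaining pieces are $-\int\Delta\phi\,u\cdot\nabla\phi$, $\int\rho F(\phi)u\cdot\nabla\phi$ (already absorbed in the material derivative of $\Psi(\phi)$), and $\int Ru\cdot\nabla\phi$.

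Adding the two identities, the capillary term from the momentum equation cancels exactly against $-\int\Delta\phi\,u\cdot\nabla\phi$. What is left is \eqref{total-energy}, with the single residual term $-\int Ru\cdot\nabla\phi$ on the right. Integrating in time gives the identity. This step is routine and requires only care with signs and boundary terms.

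\emph{The bound.} This is the main step. Two quantities must be controlled without any lower bound on the density.

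First, since $\int R=0$, the Poincar\'e--Wirtinger and Young inequalities give
\[
\Bigl|\int R\phi\Bigr|=\Bigl|\int R(\phi-\bar\phi)\Bigr|\le C\|R\|_{L^2}\|\nabla\phi\|_{L^2}\le\tfrac14\|\nabla\phi\|_{L^2}^2+C\mathcal M_0 .
\]
The same bound at $t=0$ yields $|\mathcal E_0|\le M(\mathcal M_0)$. Here we use $\Psi\ge0$, $\rho_0\in L^\infty$ (since $q>3$), and $u_0,\phi_0\in H^1$.

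Second, the residual term is estimated by H\"older and Sobolev embedding with $u\in H_0^1$:
\[
\Bigl|\int Ru\cdot\nabla\phi\Bigr|\le\|R\|_{L^3}\|u\|_{L^6}\|\nabla\phi\|_{L^2}\le C\|R\|_{H^1}\|\nabla u\|_{L^2}\|\nabla\phi\|_{L^2}\le\frac\lambda2\|\nabla u\|_{L^2}^2+C\mathcal M_0\|\nabla\phi\|_{L^2}^2 .
\]
Note that $u\in H_0^1$ means no density-weighted Poincar\'e inequality is needed for $u$, so Lemma~\ref{Lemma_F03} is not required here.

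The condition $2\lambda+3\eta\ge0$ makes the viscous part coercive:
\[
\lambda\|\nabla u\|_{L^2}^2+(\lambda+\eta)\|\operatorname{div}u\|_{L^2}^2\ge c\|\nabla u\|_{L^2}^2 .
\]
For $\eta$ negative this uses $\|\operatorname{div}u\|_{L^2}\le\|\nabla u\|_{L^2}$. With this coercivity the $\frac\lambda2\|\nabla u\|_{L^2}^2$ term is absorbed, and we obtain
\[
\widetilde{\mathcal E}(t)+c\int_0^t\bigl(\|\nabla u\|_{L^2}^2+\|\nabla\mu\|_{L^2}^2\bigr)\le M(\mathcal M_0)+C\mathcal M_0\int_0^t\widetilde{\mathcal E}(s)\,\mathrm ds .
\]
Here $\|\nabla\phi\|_{L^2}^2\le2\widetilde{\mathcal E}$, and every term of $\widetilde{\mathcal E}$ is nonnegative. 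Gronwall's inequality with $T\le1$ then gives \eqref{total-energy-M}. All constants depend only on $\Omega$, $\lambda$, $\eta$, $\gamma$ and $\mathcal M_0$, never on $\inf\rho$.
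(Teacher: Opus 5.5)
Your proposal is correct and follows the same route as the paper: the same energy identity (substituting the chemical relation into the $\mu$-tested phase equation is equivalent to the paper's testing of the chemical equation by $\phi_t$), the zero-mean Poincar\'e bound for $\int R\phi$, the $L^3$--$L^6$--$L^2$ bound for the residual flux term, and Gronwall on $[0,1]$. One small simplification: under $2\lambda+3\eta\ge0$ one already has $\lambda+\eta\ge\lambda/3>0$, so the divergence term can simply be dropped, and you do not need to compare $\|\operatorname{div}u\|_{L^2}$ with $\|\nabla u\|_{L^2}$.
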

\begin{proof}
	Testing the momentum equation by $u$, the phase equation by $\mu$,
    and the chemical equation by $\phi_t$, and using the continuity
    equation and $R_t=0$, yields \eqref{total-energy}, with
    $\mathcal E_0=\mathcal E(0)$.
	Since $\int  R\,\mathrm dx=0$, Poincar\'e's and
	Young's inequalities give
	\[
	\left|\int  R\phi\,\mathrm dx\right|
	\le C\|R\|_{L^2}\|\nabla\phi\|_{L^2}
	\le \frac12 \widetilde{\mathcal E}(t)+C\|R\|_{H^1}^2.
	\]
	Moreover, by Sobolev embedding and Young's inequality,
	\[
	\left|\int  R u\cdot\nabla\phi\,\mathrm dx\right|
	\le C\|R\|_{H^1}\|\nabla u\|_{L^2}\|\nabla\phi\|_{L^2}
	\le \frac{c_0}{2}\|\nabla u\|_{L^2}^2
	+C\|R\|_{H^1}^2\widetilde{\mathcal E}(t).
	\]
	Substituting these bounds into \eqref{total-energy} yields
	\[
	\widetilde{\mathcal E}(t)+c\int_0^t
	\left(\|\nabla u\|_{L^2}^2+\|\nabla\mu\|_{L^2}^2\right)
	\,\mathrm ds
	\le C\left(\widetilde{\mathcal E}(0)+\|R\|_{H^1}^2\right)
	+C\|R\|_{H^1}^2\int_0^t \widetilde{\mathcal E}(s)\,\mathrm ds.
	\]
	Then, \eqref{total-energy-M} follows from Gronwall's inequality and $t\le1$, with constants independent of the density lower bound.
\end{proof}
As a consequence of Lemma~\ref{Lemma_energy}, we have the following
low-order estimate for the phase variable.
\begin{lemma}\label{Lemma_phi_H1}
	Let $(\rho,u,\phi,\mu)$ be a smooth solution to
	\eqref{perturbed_system}-\eqref{B_C} in $\Omega\times(0,T)$.
	Then
	\begin{equation}\label{est_phi}
		\sup_{0\le t\le T}\|\phi(t)\|_{H^1}
		\le
		M(\mathcal M_0).
	\end{equation}
\end{lemma}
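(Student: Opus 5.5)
The energy bound \eqref{total-energy-M} of Lemma~\ref{Lemma_energy} already gives $\sup_t\|\nabla\phi(t)\|_{L^2}^2\le 2\sup_t\widetilde{\mathcal E}(t)\le M(\mathcal M_0)$. By Poincar\'e's inequality it therefore suffices to control one scalar functional of $\phi$ that fixes its "constant part". The plain mean $\int\phi$ is awkward: integrating the phase equation gives $\frac{d}{dt}\int\rho\phi\,\mathrm dx=\int\Delta\mu\,\mathrm dx=0$ by the continuity equation and $\partial_n\mu=0$, so what is conserved is the density-weighted mass, not $\int\phi$. I would therefore apply Lemma~\ref{Lemma_F03} with $v=\phi(t)$ and the weight $\rho(t)$.

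For the hypotheses of Lemma~\ref{Lemma_F03}, mass conservation gives $\int\rho(t)=\int\rho_0\ge C_0>0$. This lower bound is uniform in $\delta$ because $\int\rho_0^\delta\ge\int\rho_0$. The energy bound \eqref{total-energy-M} gives $\int\rho^\gamma(t)\le(\gamma-1)\widetilde{\mathcal E}(t)\le M(\mathcal M_0)=:E_0$. The lemma then yields
\[
\|\phi\|_{L^2}^2\le C\Bigl(\|\nabla\phi\|_{L^2}^2+\Bigl(\int\rho|\phi|\,\mathrm dx\Bigr)^2\Bigr),
\]
with $C$ depending only on $\Omega,\gamma,\int\rho_0$ and $M(\mathcal M_0)$, and not on the density lower bound.

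The remaining step is to bound $\int\rho|\phi|$. I would avoid mass conservation of $\rho\phi$ here, since it does not control $|\phi|$, and use the potential energy instead. Since $\Psi(s)=\frac14(s^2-1)^2\ge \frac14 s^2-\frac12$, one has $|s|\le 1+s^2\le 3+4\Psi(s)$, hence
\[
\int\rho|\phi|\,\mathrm dx\le 3\int\rho\,\mathrm dx+4\int\rho\Psi(\phi)\,\mathrm dx\le 3\int\rho_0\,\mathrm dx+4\widetilde{\mathcal E}(t)\le M(\mathcal M_0).
\]
Here $\int\rho_0\le C\|\rho_0\|_{W^{1,q}}\le C\mathcal M_0$. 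Combining the gradient bound with this estimate gives \eqref{est_phi}.

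The only delicate point is the uniformity of all constants in $\delta$ and in the lower bound of $\rho$. This is where the $\rho$-weighted Poincar\'e inequality of Lemma~\ref{Lemma_F03} is essential: an unweighted argument based on $\int\phi$ would need to divide by $\rho$. That uniformity is ensured by the two hypotheses checked above, the conserved mass and the energy bound on $\int\rho^\gamma$.
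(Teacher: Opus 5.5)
Your proposal is correct and follows the paper's proof: Lemma~\ref{Lemma_F03} with weight $\rho$, the gradient bound from \eqref{total-energy-M}, and control of $\int\rho|\phi|\,\mathrm dx$ through the potential energy and mass conservation. The only difference is cosmetic. You bound $\int\rho|\phi|\,\mathrm dx$ directly via the pointwise inequality $|s|\le 3+4\Psi(s)$, whereas the paper uses $|s|^4\le 8\Psi(s)+2$ and then H\"older's inequality; you also verify the hypotheses of Lemma~\ref{Lemma_F03} explicitly, which the paper leaves implicit.
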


\begin{proof}
	The inequality
	$
	|s|^4\le 8\Psi(s)+2,
	s\in\mathbb R,
	$, together with \eqref{total-energy-M}
	and conservation of mass, implies
	\[
	\int\rho|\phi|^4\,\mathrm d x
	\le 8\int\rho\Psi(\phi)\,\mathrm d x
	+2\int\rho\,\mathrm d x
	\le M(\mathcal M_0).
	\]
	H\"older's inequality further yields
	\[
	\left(\int\rho|\phi|\,\mathrm d x\right)^2
	\le
	\left(\int\rho\,\mathrm d x\right)^{3/2}
	\left(\int\rho|\phi|^4\,\mathrm d x\right)^{1/2}
	\le M(\mathcal M_0).
	\]
	Using Lemma \ref{Lemma_F03}, we therefore obtain
	\[
	\begin{aligned}
		\|\phi\|_{L^2}^2
		&\le
		C
		\left(
		\|\nabla\phi\|_{L^2}^2
		+
		\left(\int \rho |\phi|\,\mathrm d x\right)^2
		\right)  \le
		M(\mathcal M_0),
	\end{aligned}
	\]
	which gives \eqref{est_phi}.
\end{proof}
\subsection{Part II: Estimate for the velocity}

\begin{lemma}\label{Lemma_u_H2}
	Assume that $(\rho,u,\phi,\mu)$ is a smooth solution to
	\eqref{perturbed_system}-\eqref{B_C}. Then there exists a nondecreasing function
	$M\in\mathcal F$ such that, for a.e. $t\in(0,T)$,
	\begin{equation}\label{u_est_H2}
		\|\nabla u(t)\|_{H^1}\le M(\Phi(t)).
	\end{equation}
\end{lemma}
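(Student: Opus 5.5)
We would treat the momentum equation at each fixed time as a Lamé system with homogeneous Dirichlet data,
\[
\mathcal L u=\rho u_t+\rho u\cdot\nabla u+\nabla(\rho^\gamma)+(\Delta\phi)\nabla\phi,\qquad u|_{\partial\Omega}=0 .
\]
Since $\lambda>0$ and $2\lambda+3\eta\ge0$, the operator $\mathcal L$ is strongly elliptic. Standard $H^2$ regularity for the Dirichlet problem therefore gives
\[
\|\nabla u\|_{H^1}\le C\|\mathcal L u\|_{L^2}.
\]
This uses $\|u\|_{L^2}\le C\|\nabla u\|_{L^2}$ (Poincaré, $u\in H_0^1$) and involves no lower bound on $\rho$. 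It then suffices to bound each term on the right in $L^2$ by a function of $\Phi(t)$.

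The terms are handled one at a time.

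\begin{itemize}
\item[(i)] \textbf{Time derivative.} We have $\|\rho u_t\|_{L^2}\le\|\rho\|_{L^\infty}^{1/2}\|\sqrt\rho u_t\|_{L^2}$. Since $q>3$, the embedding $W^{1,q}\hookrightarrow L^\infty$ gives $\|\rho\|_{L^\infty}\le C\Phi$, and $\|\sqrt\rho u_t\|_{L^2}^2\le\Phi$. Hence this term is at most $C\Phi$.
\item[(ii)] \textbf{Pressure.} $\|\nabla\rho^\gamma\|_{L^2}\le\gamma\|\rho\|_{L^\infty}^{\gamma-1}\|\nabla\rho\|_{L^2}\le C\Phi^{\gamma}$. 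The $H^1$ norm of $\rho$ is already included in $\Phi$.
\item[(iii)] \textbf{Capillary force.} $\|\Delta\phi\,\nabla\phi\|_{L^2}\le\|\Delta\phi\|_{L^2}\|\nabla\phi\|_{L^\infty}\le C\|\phi\|_{H^3}^2\le C\Phi$, using $H^2\hookrightarrow L^\infty$ for $\nabla\phi$.
\end{itemize}

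The only genuinely nonlinear term is the convection term. We estimate
\[
\|\rho u\cdot\nabla u\|_{L^2}\le\|\rho\|_{L^\infty}\|u\|_{L^6}\|\nabla u\|_{L^3}.
\]
Sobolev and Gagliardo–Nirenberg inequalities on the bounded domain give $\|u\|_{L^6}\le C\|\nabla u\|_{L^2}$ and
\[
\|\nabla u\|_{L^3}\le C\|\nabla u\|_{L^2}^{1/2}\|\nabla u\|_{H^1}^{1/2}.
\]
Young's inequality then yields
\[
\|\rho u\cdot\nabla u\|_{L^2}\le \tfrac{1}{2C}\|\nabla u\|_{H^1}+C\|\rho\|_{L^\infty}^2\|\nabla u\|_{L^2}^3.
\]
The first term is absorbed into the left-hand side, and the second is bounded by $C\Phi^{2}\Phi^{3/2}$.

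Collecting (i)–(iii) and the convection estimate, together with $\|\nabla u\|_{L^2}^2\le\Phi$, gives $\|\nabla u\|_{H^1}\le C(\Phi^{7/2}+\Phi^\gamma)$. This is of the form $M(\Phi(t))$ with $M$ nondecreasing and independent of $\delta$, as claimed in \eqref{u_est_H2}. The only mild obstacle is the convection term, which requires the absorption step; otherwise the bound follows because every quantity involving $\rho$ is weighted appropriately and is already contained in $\Phi$. The estimate holds pointwise a.e. in $t$ because the solution has the regularity provided by Theorem~\ref{positive-density-thm}.
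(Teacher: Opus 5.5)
Your proposal is correct and follows the paper's proof essentially step by step: you apply the Dirichlet $H^2$ estimate for the Lam\'e system and bound the inertial, pressure and capillary terms exactly as the paper does. You also treat the convection term by the same $\|\nabla u\|_{L^2}^{3/2}\|\nabla u\|_{H^1}^{1/2}$ interpolation followed by Young's inequality and absorption, and you end with an explicit polynomial $M(\Phi)$, which is a harmless refinement.
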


\begin{proof}
	By the $H^2$-estimate for the Lam\'e system with homogeneous Dirichlet
	boundary condition, it follows from \eqref{perturbed_system}$_2$ that
	\begin{equation}\label{elliptic_est}
		\begin{aligned}
			\|\nabla u\|_{H^1}
			\le C\big(
			&\|\rho u_t\|_{L^2}
			+\|\rho u\cdot\nabla u\|_{L^2}
			+\|\nabla p\|_{L^2}
			+\|\Delta\phi\nabla\phi\|_{L^2}
			+\|\nabla u\|_{L^2}
			\big).
		\end{aligned}
	\end{equation}
	We estimate the terms on the right-hand side. First,
	\[
	\|\rho u_t\|_{L^2}
	\le
	\|\rho\|_{L^\infty}^{1/2}
	\|\sqrt{\rho}\,u_t\|_{L^2}.
	\]
	By H\"older's inequality, the Gagliardo-Nirenberg inequality and
	Poincar\'e's inequality,
	\[
	\begin{aligned}
		\|\rho u\cdot\nabla u\|_{L^2}
		\le
		\|\rho\|_{L^\infty}
		\|u\|_{L^6}\|\nabla u\|_{L^3}
		\le
		C\|\rho\|_{L^\infty}
		\|\nabla u\|_{L^2}^{3/2}
		\|\nabla u\|_{H^1}^{1/2}.
	\end{aligned}
	\]
	Since $p=\rho^\gamma$, we have
	\[
	\|\nabla p\|_{L^2}
	\le
	C\|\rho\|_{L^\infty}^{\gamma-1}
	\|\nabla\rho\|_{L^2}.
	\]
	Moreover,
	\[
	\begin{aligned}
		\|\Delta\phi\nabla\phi\|_{L^2}
		\le
		\|\nabla\phi\|_{L^\infty}\|\Delta\phi\|_{L^2}
		\le
		C\|\phi\|_{H^3}\|\phi\|_{H^2}.
	\end{aligned}
	\]
	Substituting these estimates into \eqref{elliptic_est}, and using
	Young's inequality, we deduce
	\[
	\|\nabla u\|_{H^1}
	\le
	\frac12\|\nabla u\|_{H^1}+M(\Phi(t)).
	\]
	This gives \eqref{u_est_H2}.
\end{proof}

\begin{lemma}\label{Lemma_ut}
	Assume that $(\rho,u,\phi,\mu)$ is a smooth solution to
	\eqref{perturbed_system}-\eqref{B_C}. Then there exists a nondecreasing function
	$M\in\mathcal F$ such that, for a.e. $t\in(0,T)$,
	\begin{equation}\label{ut-closed}
		\|\sqrt{\rho}u_t(t)\|_{L^2}^2
		+\int_0^t\|\nabla u_t\|_{L^2}^2\,\mathrm d s
		\le
		C\mathcal M_0
		+\int_0^t M(\Phi(s))\,\mathrm d s .
	\end{equation}
\end{lemma}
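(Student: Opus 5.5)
We follow the standard argument for compressible Navier--Stokes with vacuum: differentiate the momentum equation \eqref{perturbed_system}$_2$ in time, test with $u_t$, and use the continuity equation to rewrite the terms involving $\rho_t$. Writing the momentum equation as $\rho u_t+\rho u\cdot\nabla u-\mathcal Lu+\nabla p=-\Delta\phi\nabla\phi$, differentiating in $t$ and multiplying by $u_t$, we obtain
\[
\frac12\frac{\mathrm d}{\mathrm dt}\int\rho|u_t|^2\,\mathrm dx+\int\bigl(\lambda|\nabla u_t|^2+(\lambda+\eta)|\operatorname{div}u_t|^2\bigr)\mathrm dx
=\sum_{k=1}^{5}I_k .
\]
Here the right-hand side collects five contributions:
\begin{itemize}
\item $-\int\rho u\cdot\nabla(|u_t|^2)$ and $-\int\rho u\cdot\nabla(u\cdot\nabla u\cdot u_t)$, both obtained by writing $\rho_t=-\operatorname{div}(\rho u)$ and integrating by parts;
\item $-\int\rho u_t\cdot\nabla u\cdot u_t$;
\item $\int p_t\operatorname{div}u_t$, with $p_t=-\operatorname{div}(\rho^\gamma u)-(\gamma-1)\rho^\gamma\operatorname{div}u$;
\item $-\int(\Delta\phi\nabla\phi)_t\cdot u_t$.
\end{itemize}
The residual $R$ plays no role here, since the momentum equation contains no $R$ and $R_t=0$. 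This is why the estimate is structurally the classical one.

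Each $I_k$ is bounded by $\frac{\lambda}{8}\|\nabla u_t\|_{L^2}^2+M(\Phi)(1+\|\sqrt\rho u_t\|_{L^2}^2)$. We use H\"older and Sobolev inequalities, $\|\rho\|_{L^\infty}\le C\|\rho\|_{W^{1,q}}$, $\|u_t\|_{L^6}\le C\|\nabla u_t\|_{L^2}$ (which needs no density lower bound, since $u_t\in H_0^1$), and Lemma~\ref{Lemma_u_H2} for $\|u\|_{H^2}\le M(\Phi)$. The cubic-in-$u$ terms such as $\int\rho|u||u_t||\nabla u|^2$ and $\int\rho|u|^2|\nabla^2u||u_t|$ are handled with $\|u\|_{L^\infty}\le C\|u\|_{H^2}$. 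For the capillary term we integrate by parts using $\partial_n\phi=0$ and $u_t=0$ on $\partial\Omega$, or estimate directly:
\[
\bigl|\int(\Delta\phi_t\nabla\phi+\Delta\phi\nabla\phi_t)\cdot u_t\bigr|
\le C\bigl(\|\nabla\phi_t\|_{L^2}\|\phi\|_{H^3}+\|\Delta\phi\|_{L^3}\|\nabla\phi_t\|_{L^2}\bigr)\|u_t\|_{L^6}.
\]
For the first piece we move the Laplacian, e.g.\ $\int\Delta\phi_t\nabla\phi\cdot u_t=-\int\nabla\phi_t\cdot\nabla(\nabla\phi\cdot u_t)$. All of these quantities are controlled by $\Phi$ (note that $\|\nabla\phi_t\|_{L^2}^2$ is part of $\Phi$). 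The pressure term gives $\|p_t\|_{L^2}\le M(\Phi)$. Integrating in time then yields
\[
\|\sqrt\rho u_t(t)\|_{L^2}^2+\int_0^t\|\nabla u_t\|_{L^2}^2\le \limsup_{s\to0^+}\|\sqrt\rho u_t(s)\|_{L^2}^2+\int_0^tM(\Phi).
\]

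The main obstacle is the initial value $\|\sqrt\rho u_t(0)\|_{L^2}$, since we cannot divide by $\rho_0$ uniformly. We evaluate the momentum equation at $t=0$:
\[
\rho_0u_t(0)=-\rho_0u_0\cdot\nabla u_0+\mathcal Lu_0-\nabla\rho_0^\gamma-\Delta\phi_0\nabla\phi_0=-\rho_0u_0\cdot\nabla u_0+\sqrt{\rho_0}g_0 .
\]
Dividing by $\sqrt{\rho_0}$, which is legitimate at positive density, and using the compatibility condition from Proposition~\ref{apriori-prop}, we get
\[
\sqrt{\rho_0}u_t(0)=-\sqrt{\rho_0}u_0\cdot\nabla u_0+g_0,
\qquad\text{hence}\qquad
\|\sqrt{\rho_0}u_t(0)\|_{L^2}^2\le C(\|g_0\|_{L^2}^2+\|\rho_0\|_{L^\infty}\|u_0\|_{H^2}^4)\le C\mathcal M_0^{3}.
\]
This bound depends only on $\mathcal M_0$, which gives \eqref{ut-closed} (with the nonlinear dependence on $\mathcal M_0$ absorbed, since the statement's $C$ may depend on norms of the data). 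The limit $s\to0^+$ is justified by the strong initial traces in Theorem~\ref{positive-density-thm}, which give $u_t\to u_t(0)$ in $L^2$ at positive density.
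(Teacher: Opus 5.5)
Your proposal is correct and follows essentially the same route as the paper: differentiate the momentum equation in time, test with $u_t$, bound each term by $\varepsilon\|\nabla u_t\|_{L^2}^2+M(\Phi)$ using Lemma~\ref{Lemma_u_H2}, and control $\|\sqrt{\rho_0}u_t(0)\|_{L^2}$ through the momentum compatibility condition. The differences are cosmetic. You integrate $\int\rho_t(u\cdot\nabla u)\cdot u_t$ by parts via the continuity equation, whereas the paper uses $\|\rho_t\|_{L^2}\le M(\Phi)$ from \eqref{rho_rt_est}. You also treat the capillary term as $(\Delta\phi\nabla\phi)_t$, whereas the paper uses the stress form $J_5$--$J_7$.

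One small correction concerns your capillary display. After moving the Laplacian, $\int\Delta\phi_t\nabla\phi\cdot u_t$ also produces the term $\|\nabla\phi\|_{L^\infty}\|\nabla\phi_t\|_{L^2}\|\nabla u_t\|_{L^2}$. The bound should therefore carry $\|\nabla u_t\|_{L^2}$ rather than only $\|u_t\|_{L^6}$. This does not affect the conclusion.
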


\begin{proof}
	First, as a consequence of \eqref{u_est_H2}, it follows directly from the continuity equation \eqref{perturbed_system}$_1$ and Sobolev embedding that
	 \begin{equation}\label{rho_rt_est}
		\|\rho_t(t)\|_{L^2}
		\le
		\|u\|_{L^\infty}\|\nabla\rho\|_{L^2}
		+
		\|\rho\|_{L^\infty}\|\operatorname{div}u\|_{L^2}
		\le
		M(\Phi(t)).
	\end{equation}
	Then, differentiating \eqref{perturbed_system}$_2$ with respect to $t$, testing the resulting equation by $u_t$, and integrating over $\Omega$, we use the continuity equation \eqref{perturbed_system}$_1$ and $u_t=0$ on $\partial\Omega$ to obtain
	\begin{equation}\label{energy-ut}
		\begin{aligned}
			&\frac12\frac{\mathrm d}{\mathrm d t}
			\|\sqrt{\rho}u_t\|_{L^2}^2
			+\lambda\|\nabla u_t\|_{L^2}^2
			+(\lambda+\eta)\|\operatorname{div}u_t\|_{L^2}^2   \\
			={}&
			-2\int \rho u\cdot\nabla u_t\cdot u_t\,\mathrm d x
			-\int \rho_t(u\cdot\nabla)u\cdot u_t\,\mathrm d x
			-\int \rho(u_t\cdot\nabla)u\cdot u_t\,\mathrm d x
			+\gamma\int \rho^{\gamma-1}\rho_t\operatorname{div}u_t\,\mathrm d x  \\
			&+
			\int(\nabla\phi_t\cdot\nabla)u_t\cdot\nabla\phi\,\mathrm d x
			+\int(\nabla\phi\cdot\nabla)u_t\cdot\nabla\phi_t\,\mathrm d x
			-
			\int \nabla\phi\cdot\nabla\phi_t\,\operatorname{div}u_t\,\mathrm d x
			:=\sum_{i=1}^7 J_i .
		\end{aligned}
	\end{equation}
	Now we estimate $J_i$ as follows. By H\"older's inequality, Sobolev
	embedding, Poincar\'e's inequality, Lemma \ref{Lemma_u_H2} and \eqref{rho_rt_est}, we have
	\[
	\begin{aligned}
		|J_1|
		&\le
		C\|\rho\|_{L^\infty}^{1/2}
		\|\sqrt{\rho}u_t\|_{L^2}
		\|u\|_{L^\infty}
		\|\nabla u_t\|_{L^2}
		\le
		M(\Phi)\|\nabla u_t\|_{L^2},\\
		|J_2|
		&\le
		\|\rho_t\|_{L^2}
		\|u\|_{L^\infty}
		\|\nabla u\|_{L^3}
		\|u_t\|_{L^6}
		\le
		M(\Phi)\|\nabla u_t\|_{L^2},\\
		|J_3|
		&\le
		\|\rho\|_{L^\infty}^{1/2}
		\|\sqrt{\rho}u_t\|_{L^2}
		\|\nabla u\|_{L^3}
		\|u_t\|_{L^6}
		\le
		M(\Phi)\|\nabla u_t\|_{L^2},
	\end{aligned}
	\]
	and
	\[
	\begin{aligned}
		|J_4|
		\le
		C\|\rho\|_{L^\infty}^{\gamma-1}
		\|\rho_t\|_{L^2}
		\|\operatorname{div}u_t\|_{L^2}
		\le
		M(\Phi)\|\nabla u_t\|_{L^2}.
	\end{aligned}
	\]
	For the capillary terms, we have
	\[
	\begin{aligned}
		|J_5|+|J_6|+|J_7|
		\le
		C\|\nabla u_t\|_{L^2}
		\|\nabla\phi\|_{L^\infty}
		\|\nabla\phi_t\|_{L^2}
		\le
		C\|\nabla u_t\|_{L^2}
		\|\phi\|_{H^3}
		\|\nabla\phi_t\|_{L^2}
		\le
		M(\Phi)\|\nabla u_t\|_{L^2}.
	\end{aligned}
	\]
	Therefore, substituting
	the above estimates into \eqref{energy-ut} and utilizing Young's inequality, we obtain
	\begin{equation}\label{ut-diff-ineq}
		\frac{\mathrm d}{\mathrm d t}
		\|\sqrt{\rho}u_t\|_{L^2}^2
		+C\|\nabla u_t\|_{L^2}^2
		\le
		M(\Phi(t)),
	\end{equation}
	where $C>0$ is independent of the lower bound $\delta$. Evaluating \eqref{perturbed_system}$_2$ at $t=0$ and using
	the first compatibility condition in \eqref{App_C_C}, we obtain
	\[
		\|\sqrt{\rho_0}u_t(0)\|_{L^2}^2
		\le 2\|g_0\|_{L^2}^2
		+2\|\rho_0\|_{L^\infty}
		\|u_0\|_{L^\infty}^2\|\nabla u_0\|_{L^2}^2
		\le C\mathcal M_0,
	\]
	Integrating \eqref{ut-diff-ineq} over $(0,t)$ and using this
	initial bound yields \eqref{ut-closed}.
\end{proof}

The estimate for $\|\nabla u\|_{L^2}$ follows directly from the preceding time-derivative bound.

\begin{lemma}\label{Lemma_nabla_u}
	Assume that $(\rho,u,\phi,\mu)$ is a smooth solution to
	\eqref{perturbed_system}-\eqref{B_C}. Then there exists a nondecreasing function
	$M\in\mathcal F$ such that, for any $t\in(0,T)$,
	\begin{equation}\label{nabla_u-closed}
		\|\nabla u(t)\|_{L^2}^2
		\le
		C\mathcal M_0
		+
		\int_0^t M(\Phi(s))\,\mathrm d s .
	\end{equation}
\end{lemma}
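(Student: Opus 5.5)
The estimate follows by integrating $\nabla u\cdot\nabla u_t$ in time; the key point is not to pass through the weighted quantity $\sqrt\rho\,u_t$, which cannot control $u_t$ on (near-)vacuum regions. Write
\[
\|\nabla u(t)\|_{L^2}^2=\|\nabla u_0\|_{L^2}^2+2\int_0^t\int \nabla u:\nabla u_t\,\mathrm dx\,\mathrm ds .
\]
The initial term is bounded by $\|u_0\|_{H^2}^2\le\mathcal M_0$. The time integral is estimated with the unweighted quantity $\|\nabla u_t\|_{L^2}$:
\[
2\int_0^t\|\nabla u\|_{L^2}\|\nabla u_t\|_{L^2}\,\mathrm ds
\le \int_0^t\|\nabla u\|_{L^2}^2\,\mathrm ds+\int_0^t\|\nabla u_t\|_{L^2}^2\,\mathrm ds .
\]
The first integral on the right is at most $\int_0^t\Phi(s)\,\mathrm ds$, since $\|\nabla u\|_{L^2}^2$ is one of the terms in \eqref{def_Phi}, and it can be absorbed into $\int_0^tM(\Phi)\,\mathrm ds$. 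Alternatively, it is bounded directly by $M(\mathcal M_0)$ using the dissipation in \eqref{total-energy-M}.

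For the second integral we invoke Lemma~\ref{Lemma_ut}. By \eqref{ut-closed},
\[
\int_0^t\|\nabla u_t\|_{L^2}^2\,\mathrm ds\le C\mathcal M_0+\int_0^tM(\Phi(s))\,\mathrm ds .
\]
Combining these bounds gives \eqref{nabla_u-closed}, after renaming $M$ so that $M(s)\ge s$, which is allowed since $\Phi\ge1$ and $M\in\mathcal F$.

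The only point needing care is the regularity required to justify the identity $\frac{\mathrm d}{\mathrm dt}\|\nabla u\|_{L^2}^2=2\int\nabla u:\nabla u_t\,\mathrm dx$. For smooth solutions this is immediate, because $u_t\in L^2(0,T;H_0^1)$ and $u\in L^\infty(0,T;H^1_0)$, so $\|\nabla u\|_{L^2}^2$ is absolutely continuous in time.

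The remaining thing to confirm is that the constant produced does not depend on the density lower bound. It does not: neither $\|\nabla u_0\|_{L^2}$, nor the bound \eqref{ut-closed}, nor the energy bound \eqref{total-energy-M} involves $\inf\rho$, so the resulting $M$ is uniform in $\delta$.
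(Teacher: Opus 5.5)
Your argument is correct and is essentially the paper's own proof. Both write $\|\nabla u(t)\|_{L^2}^2$ via the fundamental theorem of calculus, apply Young's inequality, bound $\|\nabla u(s)\|_{L^2}^2\le\Phi(s)$, and invoke \eqref{ut-closed}. Your suggested alternative of bounding $\int_0^t\|\nabla u\|_{L^2}^2\,\mathrm ds$ through \eqref{total-energy-M} produces $M(\mathcal M_0)$ rather than a term linear in $\mathcal M_0$, so only the route through $\Phi$ gives the stated form exactly.
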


\begin{proof}
	By the fundamental theorem of calculus, Young's inequality, and
	\eqref{ut-closed}, we obtain
	\[
	\begin{aligned}
		\|\nabla u(t)\|_{L^2}^2
		&=\|\nabla u_0\|_{L^2}^2
		+2\int_0^t\int \nabla u:\nabla u_t\,\mathrm dx\,\mathrm ds\\
		&\le \|\nabla u_0\|_{L^2}^2
		+\int_0^t\left(\|\nabla u\|_{L^2}^2
		+\|\nabla u_t\|_{L^2}^2\right)\,\mathrm ds\\
		&\le C\mathcal M_0+\int_0^t M(\Phi(s))\,\mathrm ds,
	\end{aligned}
	\]
	where we used $\|\nabla u(s)\|_{L^2}^2\le\Phi(s)$ and enlarged
	$M$ if necessary. This proves \eqref{nabla_u-closed}.
\end{proof}

\subsection{Part III: Estimates for the density}

\begin{lemma}\label{nabla2_u_est}
	Assume that $(\rho,u,\phi,\mu)$ is a smooth solution to
	\eqref{perturbed_system}-\eqref{B_C}. Then there exists a nondecreasing function
	$M\in\mathcal F$ such that, for any $t\in(0,T)$,
	\begin{equation}\label{nabla2_u_integral_est}
		\int_0^t
		\left(
		\|\nabla^2u\|_{L^q}
		+
		\|\nabla u\|_{L^\infty}
		\right)\,\mathrm d s
		\le
		C\mathcal M_0
		+
		\int_0^t M(\Phi(s))\,\mathrm d s .
	\end{equation}
\end{lemma}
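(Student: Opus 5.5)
The plan is to use the $W^{2,q}$ estimate for the Lamé system with homogeneous Dirichlet data, applied to \eqref{perturbed_system}$_2$ at a.e. fixed time. This gives
\[
\|\nabla^2u\|_{L^q}\le C\big(\|\rho u_t\|_{L^q}+\|\rho u\cdot\nabla u\|_{L^q}+\|\nabla p\|_{L^q}+\|\Delta\phi\nabla\phi\|_{L^q}+\|\nabla u\|_{L^2}\big).
\]
Since $q>3$, Sobolev embedding gives $\|\nabla u\|_{L^\infty}\le C\|\nabla u\|_{W^{1,q}}\le C(\|\nabla^2u\|_{L^q}+\|\nabla u\|_{L^2})$. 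Poincaré's inequality for $u\in H_0^1$ is used here to control lower-order terms. It therefore suffices to bound $\int_0^t\|\nabla^2u\|_{L^q}\,\mathrm ds$.

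The terms not involving $u_t$ are handled pointwise in time by $M(\Phi(s))$.
\begin{itemize}
\item For the pressure, $\|\nabla p\|_{L^q}\le C\|\rho\|_{L^\infty}^{\gamma-1}\|\nabla\rho\|_{L^q}\le M(\Phi)$, using $W^{1,q}\hookrightarrow L^\infty$.
\item For the convection term, $\|\rho u\cdot\nabla u\|_{L^q}\le\|\rho\|_{L^\infty}\|u\|_{L^\infty}\|\nabla u\|_{L^q}$. Since $q<6$, $\|\nabla u\|_{L^q}\le C\|\nabla u\|_{H^1}$, and $\|u\|_{L^\infty}\le C\|u\|_{H^2}$. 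Both are bounded by $M(\Phi)$ via Lemma~\ref{Lemma_u_H2}.
\item For the capillary term, $\|\Delta\phi\nabla\phi\|_{L^q}\le\|\nabla\phi\|_{L^\infty}\|\Delta\phi\|_{L^q}\le C\|\phi\|_{H^3}^2\le M(\Phi)$, again using $q<6$ and $H^1\hookrightarrow L^q$ for $\nabla^2\phi$.
\end{itemize}

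The only term requiring time integration is $\|\rho u_t\|_{L^q}$. I would interpolate between the weighted $L^2$ norm and $L^6$:
\[
\|\rho u_t\|_{L^q}\le\|\rho\|_{L^\infty}^{\theta'}\|\rho u_t\|_{L^2}^{\theta}\|\rho u_t\|_{L^6}^{1-\theta}\le M(\Phi)\,\|\sqrt\rho u_t\|_{L^2}^{\theta}\|\nabla u_t\|_{L^2}^{1-\theta},
\]
with $\theta=(6-q)/(2q)\in(0,1)$. Alternatively, one may bound directly $\|\rho u_t\|_{L^q}\le\|\rho\|_{L^\infty}\|u_t\|_{L^6}\le M(\Phi)\|\nabla u_t\|_{L^2}$, which is the simplest route. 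Young's inequality then gives $\|\rho u_t\|_{L^q}\le M(\Phi(s))+\|\nabla u_t(s)\|_{L^2}^2$.

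Integrating over $(0,t)$ and invoking \eqref{ut-closed} from Lemma~\ref{Lemma_ut} yields $\int_0^t\|\nabla u_t\|_{L^2}^2\,\mathrm ds\le C\mathcal M_0+\int_0^tM(\Phi)\,\mathrm ds$. Combining this with the pointwise bounds and enlarging $M$ gives \eqref{nabla2_u_integral_est}.

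The main point to check is that no constant depends on the lower bound of $\rho$. This holds because $u_t$ is controlled only through $\|\nabla u_t\|_{L^2}$ and through $\rho$-weighted quantities, and never through $\rho^{-1}$. The remaining care is the regularity of the Lamé $W^{2,q}$ estimate, which holds for smooth $\partial\Omega$, and the restriction $q<6$, which is exactly what allows the $H^1\hookrightarrow L^q$ embeddings above.
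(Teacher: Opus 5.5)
Your proposal is correct and follows essentially the same route as the paper. Both arguments apply the $W^{2,q}$ Lam\'e estimate, bound $\|\rho u_t\|_{L^q}\le M(\Phi)\|\nabla u_t\|_{L^2}$, use Young's inequality together with \eqref{ut-closed}, bound the convection, pressure and capillary terms pointwise by $M(\Phi)$, and control $\|\nabla u\|_{L^\infty}$ through $W^{2,q}\hookrightarrow W^{1,\infty}$ plus a $\|\nabla u\|_{L^2}$ term. Your interpolation variant for $\rho u_t$ is unnecessary but harmless.
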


\begin{proof}
	By the $W^{2,q}$-estimate for the Lam\'e system with homogeneous
	Dirichlet boundary condition, we have
	\begin{equation}\label{W2q_Lame_est}
		\begin{aligned}
			\|u\|_{W^{2,q}}
			\le C\big(
			&\|\rho u_t\|_{L^q}
			+
			\|\rho u\cdot\nabla u\|_{L^q}
			+
			\|\nabla(\rho^\gamma)\|_{L^q}
			+
			\|\Delta\phi\nabla\phi\|_{L^q}
			\big).
		\end{aligned}
	\end{equation}
	In particular, by Poincar\'e's inequality,
	\[
	\|\nabla^2u\|_{L^q}
	\le
	C\|u\|_{W^{2,q}}.
	\]
	We estimate the terms on the right-hand side of \eqref{W2q_Lame_est}.
	Since $3<q\le 6$ and $\Omega\subset\mathbb R^3$ is bounded, the Sobolev embedding
	$H^1_0(\Omega)\hookrightarrow L^q(\Omega)$ gives
	\[
	\|\rho u_t\|_{L^q}
	\le
	\|\rho\|_{L^\infty}\|u_t\|_{L^q}
	\le
	C\|\rho\|_{L^\infty}\|\nabla u_t\|_{L^2} \le M(\Phi)\|\nabla u_t\|_{L^2}.
	\]
	Moreover, using the three-dimensional embedding $H^2(\Omega)\hookrightarrow L^\infty(\Omega)$ and \eqref{u_est_H2}, we get
	\[
	\begin{aligned}
		\|\rho u\cdot\nabla u\|_{L^q}
		\le
		\|\rho\|_{L^\infty}
		\|u\|_{L^\infty}
		\|\nabla u\|_{L^q}
		\le
		C\|\rho\|_{L^\infty}\|u\|_{H^2}\|\nabla u\|_{H^1}
		\le
		M(\Phi).
	\end{aligned}
	\]
	For the pressure term, we have
	\[
	\|\nabla(\rho^\gamma)\|_{L^q}
	\le
	C\|\rho\|_{L^\infty}^{\gamma-1}
	\|\nabla\rho\|_{L^q}
	\le
	M(\Phi).
	\]
	Similarly, for the capillary term, one computes
	\[
	\begin{aligned}
		\|\Delta\phi\nabla\phi\|_{L^q}
		&\le
		\|\nabla\phi\|_{L^\infty}
		\|\Delta\phi\|_{L^q}
		\le
		C\|\phi\|_{H^3}\|\phi\|_{H^3}
		\le
		M(\Phi).
	\end{aligned}
	\]
	Consequently,
	\[
	\|\nabla^2u\|_{L^q}
	\le
	M(\Phi)\|\nabla u_t\|_{L^2}
	+
	M(\Phi).
	\]
	By Young's inequality and the fact that the exact form of $M$ may vary
	from line to line,
	\[
	\|\nabla^2u\|_{L^q}
	\le
	\|\nabla u_t\|_{L^2}^2
	+
	M(\Phi).
	\]
	Integrating over $(0,t)$ and using \eqref{ut-closed}, we obtain
	\begin{equation}\label{nabla2u_Lq_int}
		\int_0^t\|\nabla^2u\|_{L^q}\,\mathrm d s
		\le
		C\mathcal M_0
		+
		\int_0^t M(\Phi(s))\,\mathrm d s .
	\end{equation}
	It follows from Sobolev embedding and Poincaré's inequality that
	\[
	\|\nabla u\|_{L^\infty}
	\le
	C\|u\|_{W^{2,q}}
	\le
	C\|\nabla^2u\|_{L^q}
	+
	C\|\nabla u\|_{L^2}.
	\]
	Therefore, by \eqref{nabla2u_Lq_int} and $\|\nabla u(s)\|_{L^2}^2\le \Phi(s)$, one has
	\[
	\begin{aligned}
		\int_0^t\|\nabla u\|_{L^\infty}\,\mathrm d s
		\le
		C\mathcal M_0
		+
		\int_0^t M(\Phi(s))\,\mathrm d s .
	\end{aligned}
	\]
	Combining this with \eqref{nabla2u_Lq_int} gives
	\eqref{nabla2_u_integral_est}.
\end{proof}

\begin{lemma}\label{Lemma_rho_W1q}
	Assume that $(\rho,u,\phi,\mu)$ is a smooth solution to
	\eqref{perturbed_system}-\eqref{B_C}. Then there exists a nondecreasing function
	$M\in\mathcal F$ such that, for any $t\in(0,T)$,
	\begin{equation}\label{rho-closed-simple}
		\|\rho(t)\|_{W^{1,q}}
		\le
		\exp\left\{
		C\mathcal M_0
		+
		\int_0^t M(\Phi(s))\,\mathrm d s
		\right\}.
	\end{equation}
\end{lemma}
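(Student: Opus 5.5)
We estimate $\|\rho\|_{W^{1,q}}$ through the transport equation alone; this requires no lower bound on $\rho$. First consider the $L^\infty$ part. Along characteristics $\frac{d}{dt}\rho(X(t),t) = -\rho\,\operatorname{div}u$, so
\[
\|\rho(t)\|_{L^\infty}\le\|\rho_0\|_{L^\infty}\exp\Big(\int_0^t\|\nabla u\|_{L^\infty}\,\mathrm ds\Big).
\]
The same bound follows by testing with $|\rho|^{p-2}\rho$ and letting $p\to\infty$. The same argument controls $\|\rho\|_{L^q}$.

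Next consider the gradient. We differentiate the continuity equation, $\partial_t\nabla\rho + u\cdot\nabla(\nabla\rho) + (\nabla u)^{\top}\nabla\rho + \operatorname{div}u\,\nabla\rho + \rho\nabla\operatorname{div}u = 0$, and multiply by $|\nabla\rho|^{q-2}\nabla\rho$. Integrating by parts the transport term, which is admissible since $u=0$ on $\partial\Omega$, gives
\[
\frac{\mathrm d}{\mathrm dt}\|\nabla\rho\|_{L^q}\le C\|\nabla u\|_{L^\infty}\|\nabla\rho\|_{L^q}+C\|\rho\|_{L^\infty}\|\nabla^2u\|_{L^q}.
\]
To make the right-hand side linear, we set $y(t)=1+\|\rho(t)\|_{L^q}+\|\nabla\rho(t)\|_{L^q}$ and bound $\|\rho\|_{L^\infty}\le Cy$, using $W^{1,q}\hookrightarrow L^\infty$ for $q>3$. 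This yields
\[
y'(t)\le C\big(1+\|\nabla u\|_{L^\infty}+\|\nabla^2u\|_{L^q}\big)\,y(t).
\]
Gronwall's inequality then gives
\[
y(t)\le y(0)\exp\Big\{C\int_0^t\big(1+\|\nabla u\|_{L^\infty}+\|\nabla^2u\|_{L^q}\big)\,\mathrm ds\Big\}.
\]

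Finally we insert \eqref{nabla2_u_integral_est} from Lemma~\ref{nabla2_u_est}. Since $t\le1$, the exponent is at most $C+C\mathcal M_0+\int_0^tM(\Phi)\,\mathrm ds$. We absorb $y(0)\le C\|\rho_0\|_{W^{1,q}}\le C\mathcal M_0^{1/2}\le e^{C\mathcal M_0}$, the factor $\log y(0)$, and the constants into the exponent, enlarging $C$ and $M$ as the convention allows. This gives \eqref{rho-closed-simple}.

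The main obstacle is the rigor of the differential inequality for $\|\nabla\rho\|_{L^q}$. The function $|\nabla\rho|^q$ is not smooth where $\nabla\rho=0$, and the boundary term $\int_{\partial\Omega}(u\cdot n)|\nabla\rho|^q$ must vanish. Since the solution is assumed smooth and $u|_{\partial\Omega}=0$, the boundary term vanishes. The non-smoothness is handled by working with $(|\nabla\rho|^2+\varepsilon)^{q/2}$ and letting $\varepsilon\to0$.

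The remaining point is the term $\rho\nabla\operatorname{div}u$. Here we must use the uniform $L^\infty$ bound on $\rho$ (rather than any bound on $\rho^{-1}$), and this is exactly why the estimate is independent of $\delta$.
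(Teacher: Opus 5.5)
Your proposal is correct and follows the paper's proof. The paper cites the transport estimate of Lemma~\ref{app:transport-lemma}, whose proof is exactly your differentiate-and-Gronwall argument for the continuity equation. It then bounds $\|u\|_{W^{2,q}}\le C(\|\nabla u\|_{L^\infty}+\|\nabla^2u\|_{L^q})$ by Poincar\'e's inequality, inserts \eqref{nabla2_u_integral_est}, and absorbs $\|\rho_0\|_{W^{1,q}}$ into the exponential, just as you do.
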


\begin{proof}
	Applying Lemma~\ref{app:transport-lemma} with $v=u$, we obtain
	\[
	\|\rho(t)\|_{W^{1,q}}
	\le
	\|\rho_0\|_{W^{1,q}}
	\exp\left(
	C\int_0^t\|u(s)\|_{W^{2,q}}\,\mathrm ds
	\right).
	\]
	Since $u=0$ on $\partial\Omega$, Poincar\'e's inequality gives
	\[
	\|u\|_{W^{2,q}}
	\le C\left(
	\|\nabla u\|_{L^\infty}
	+\|\nabla^2u\|_{L^q}
	\right).
	\]
	Combining this with \eqref{nabla2_u_integral_est} and absorbing
	the initial factor into the exponential by the definition
	of $\mathcal M_0$, we obtain \eqref{rho-closed-simple},
	after enlarging $C$ and $M\in\mathcal F$ if necessary.
\end{proof}

%\begin{corollary}\label{low_energy_closed}
%	Under the assumptions above, one has
%	\[
%	E^0(t)
%	+
%	\int_0^t
%	\left(
%	\|\nabla u(s)\|_{L^2}^2
%	+
%	\|\nabla\mu(s)\|_{L^2}^2
%	\right)\,ds
%	\le
%	M\left(
%	\mathcal M_0+\int_0^tM(\Phi(s))\,ds
%	\right).
%	\]
%\end{corollary}
%
%\begin{proof}
%	By Lemma~\ref{Lemma_energy},
%	\[
%	E^0(t)
%	+
%	\int_0^t
%	\left(
%	\|\nabla u(s)\|_{L^2}^2
%	+
%	\|\nabla\mu(s)\|_{L^2}^2
%	\right)\,ds
%	\le M(\mathcal M_0).
%	\]
%	This proves the assertion.
%\end{proof}

\subsection{Part IV: Estimates for $\mu$ and $\nabla\phi_t$}

\begin{lemma}\label{Lemma_rho_mu_phit}
	Assume that $(\rho,u,\phi,\mu)$ is a sufficiently smooth solution to
	\eqref{perturbed_system}-\eqref{B_C} on $\Omega\times(0,T)$. Then there exists a
	nondecreasing function $M\in\mathcal F$ such that, for any $t\in(0,T)$,
	\begin{equation}\label{rho_mu_phit_est}
		\|\sqrt{\rho}\,\mu(t)\|_{L^2}^2
		+
		\int_0^t\|\sqrt{\rho}\,\phi_t\|_{L^2}^2\,\mathrm d s
		\le
		C\mathcal M_0
		+
		\int_0^t M(\Phi(s))\,\mathrm d s .
	\end{equation}
\end{lemma}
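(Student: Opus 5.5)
We will prove \eqref{rho_mu_phit_est} by testing the phase equation with $\phi_t$ and the time-differentiated chemical relation with $\mu$. The residual $R$ is time independent, so the relation $\rho\mu=-\Delta\phi+\rho F(\phi)+R$ differentiates to
\[
(\rho\mu)_t=-\Delta\phi_t+(\rho F(\phi))_t .
\]

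\textbf{Step 1: the basic identity.} Multiply \eqref{perturbed_system}$_3$ by $\phi_t$ and integrate. Using $\partial_n\mu=0$,
\[
\|\sqrt\rho\phi_t\|_{L^2}^2=-\int\rho u\cdot\nabla\phi\,\phi_t\,\mathrm dx-\int\nabla\mu\cdot\nabla\phi_t\,\mathrm dx .
\]
To handle the last term, multiply the differentiated chemical relation by $\mu$. Since $\partial_n\phi_t=0$,
\[
\int(\rho\mu)_t\mu\,\mathrm dx=\int\nabla\phi_t\cdot\nabla\mu\,\mathrm dx+\int(\rho F(\phi))_t\mu\,\mathrm dx .
\]
The continuity equation gives $\int(\rho\mu)_t\mu=\frac12\frac{\mathrm d}{\mathrm dt}\|\sqrt\rho\mu\|_{L^2}^2-\frac12\int\rho_t\mu^2$, and $\int\rho_t\mu^2=2\int\rho\mu\,u\cdot\nabla\mu$. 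Adding the two identities makes the terms $\int\nabla\mu\cdot\nabla\phi_t$ cancel, and we obtain
\[
\frac12\frac{\mathrm d}{\mathrm dt}\|\sqrt\rho\mu\|_{L^2}^2+\|\sqrt\rho\phi_t\|_{L^2}^2
=\int\rho\mu\,u\cdot\nabla\mu\,\mathrm dx-\int\rho u\cdot\nabla\phi\,\phi_t\,\mathrm dx-\int(\rho F(\phi))_t\mu\,\mathrm dx .
\]

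\textbf{Step 2: bounding the right-hand side.} Each term should be at most $\frac12\|\sqrt\rho\phi_t\|_{L^2}^2+M(\Phi)$.

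The first term satisfies $\|\rho\|_{L^\infty}\|u\|_{L^\infty}\|\mu\|_{L^2}\|\nabla\mu\|_{L^2}\le M(\Phi)$. This uses Lemma~\ref{Lemma_u_H2} and $\|\mu\|_{H^2}^2\le\Phi$.

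The second term is bounded by $\|\rho\|_{L^\infty}^{1/2}\|u\|_{L^\infty}\|\nabla\phi\|_{L^2}\|\sqrt\rho\phi_t\|_{L^2}$, and Young's inequality absorbs it.

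For the third term, write $(\rho F(\phi))_t=\rho_tF(\phi)+\rho F'(\phi)\phi_t$:
\begin{itemize}
\item The piece $\int\rho F'(\phi)\phi_t\mu$ is at most $\|\rho\|_{L^\infty}^{1/2}\|F'(\phi)\|_{L^\infty}\|\mu\|_{L^2}\|\sqrt\rho\phi_t\|_{L^2}$. Here $\phi\in H^3\subset L^\infty$, and Young's inequality absorbs it.
\item The piece $\int\rho_tF(\phi)\mu$ is at most $\|\rho_t\|_{L^2}\|F(\phi)\|_{L^\infty}\|\mu\|_{L^2}\le M(\Phi)$, by \eqref{rho_rt_est}.
\end{itemize}
Alternatively one may bound the $\rho_t$ pieces by keeping the divergence form $-\operatorname{div}(\rho u)$ and integrating by parts. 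Either way the bound is polynomial in $\Phi$ and does not involve $\rho^{-1}$.

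\textbf{Step 3: initial value and integration.} Integrating in time and absorbing the $\frac12\|\sqrt\rho\phi_t\|_{L^2}^2$ terms gives \eqref{rho_mu_phit_est}, provided $\|\sqrt{\rho_0}\mu_0\|_{L^2}^2\le\|\rho_0\|_{L^\infty}\|\mu_0\|_{L^2}^2\le C\mathcal M_0$. This holds by Sobolev embedding $W^{1,q}\subset L^\infty$ and the definition \eqref{def_initial_M}.

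\textbf{Main obstacle.} The delicate point is whether the formal computation is legitimate at the level of regularity of the solutions. In particular, we need $\mu_t\in L^2(H^1)$ and $\phi_t\in L^\infty(H^1)$ to justify $\int\nabla\phi_t\cdot\nabla\mu$ and the time derivative of $\|\sqrt\rho\mu\|_{L^2}^2$. For the smooth solutions assumed here these are fine. The essential structural point is the exact cancellation of $\int\nabla\mu\cdot\nabla\phi_t$. This cancellation relies on $R_t=0$, which is precisely why the time-independent residual was chosen.
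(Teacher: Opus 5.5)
Your proposal is correct and follows the paper's proof. Testing the time-differentiated chemical relation by $\mu$ is equivalent to the paper's combination of tests, and bounding $\int\rho_tF(\phi)\mu\,\mathrm dx$ directly via \eqref{rho_rt_est}, instead of integrating by parts into the paper's $G_3,G_4$, is an inessential variation.

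One harmless sign slip: in fact $\int(\rho\mu)_t\mu\,\mathrm dx=\frac12\frac{\mathrm d}{\mathrm dt}\|\sqrt\rho\,\mu\|_{L^2}^2+\frac12\int\rho_t\mu^2\,\mathrm dx$. The correct right-hand side of your identity is therefore $-\int\rho\mu\,u\cdot\nabla\mu\,\mathrm dx-\int\rho u\cdot\nabla\phi\,\phi_t\,\mathrm dx+\int(\rho F(\phi))_t\mu\,\mathrm dx$. This changes nothing, since you only estimate absolute values.
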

\begin{proof}
	Testing \eqref{perturbed_system}$_3$ and
	\eqref{perturbed_system}$_4$ by $\phi_t$ and $\mu_t$,
	respectively, and differentiating the identity obtained by
	testing \eqref{perturbed_system}$_4$ by $\mu$ with respect to
	$t$, we use the continuity equation \eqref{perturbed_system}$_1$ and integrate by parts
	to obtain
	\begin{equation}\label{rho_mu_energy}
		\begin{aligned}
			\frac12\frac{\mathrm d}{\mathrm dt}
			\|\sqrt{\rho}\,\mu\|_{L^2}^2
			+\|\sqrt{\rho}\,\phi_t\|_{L^2}^2
			=&
			-\int \rho\mu u\cdot\nabla\mu\,\mathrm dx
			-\int \rho\phi_t u\cdot\nabla\phi\,\mathrm dx\\
			&+\int \rho u\cdot\nabla\mu\,F(\phi)\,\mathrm dx
			+\int \rho\mu u\cdot\nabla\phi\,F'(\phi)\,\mathrm dx\\
			&+\int \rho\mu F'(\phi)\phi_t\,\mathrm dx
			=: \sum_{i=1}^5 G_i.
		\end{aligned}
	\end{equation}
	By H\"older's and Young's inequalities and the preceding estimates,
	we have
	\[
	\begin{aligned}
		|G_1|
		&\le
		\|\rho\|_{L^\infty}^{1/2}
		\|\sqrt{\rho}\,\mu\|_{L^2}
		\|u\|_{L^\infty}
		\|\nabla\mu\|_{L^2}
		\le M(\Phi),
		\\
		|G_2|
		&\le
		\|\rho\|_{L^\infty}^{1/2}
		\|\sqrt{\rho}\,\phi_t\|_{L^2}
		\|u\|_{L^\infty}
		\|\nabla\phi\|_{L^2}
		 \le
		\frac14\|\sqrt{\rho}\,\phi_t\|_{L^2}^2
		+
		M(\Phi),
		\\
		|G_3|
		&\le
		\|\rho\|_{L^2}
		\|u\|_{L^\infty}
		\|\nabla\mu\|_{L^2}
		\|F(\phi)\|_{L^\infty}
		\le M(\Phi),
		\\
		|G_4|
		&\le
		\|\rho\|_{L^\infty}^{1/2}
		\|\sqrt{\rho}\,\mu\|_{L^2}
		\|u\|_{L^\infty}
		\|\nabla\phi\|_{L^2}
		\|F'(\phi)\|_{L^\infty}
		\le M(\Phi),
		\\
		|G_5|
		&\le
		\|\sqrt{\rho}\,\mu\|_{L^2}
		\|\sqrt{\rho}\,\phi_t\|_{L^2}
		\|F'(\phi)\|_{L^\infty}
		 \le
		\frac14\|\sqrt{\rho}\,\phi_t\|_{L^2}^2
		+
		M(\Phi).
	\end{aligned}
	\]
	Substituting the estimates for \(G_i\) into \eqref{rho_mu_energy} and absorbing the corresponding terms into the left-hand side, we
	obtain
	\begin{equation}\label{rho_mu_diff_ineq}
		\frac{\mathrm d}{\mathrm d t}
		\|\sqrt{\rho}\,\mu\|_{L^2}^2
		+
		\|\sqrt{\rho}\,\phi_t\|_{L^2}^2
		\le
		M(\Phi).
	\end{equation}
	Integrating \eqref{rho_mu_diff_ineq} over \((0,t)\) and using
	$\|\sqrt{\rho_0}\,\mu_0\|_{L^2}^2\le C\mathcal M_0$, we
	obtain \eqref{rho_mu_phit_est}.
\end{proof}

\begin{lemma}\label{Lemma_mu_phi_t}
	Assume that $(\rho,u,\phi,\mu)$ is a sufficiently smooth solution to
	\eqref{perturbed_system}-\eqref{B_C} on $\Omega\times(0,T)$. Then there exists a
	nondecreasing function $M\in\mathcal F$ such that, for any $t\in(0,T)$,
	\begin{equation}\label{phi_mu_t_energy}
		\|\mu(t)\|_{H^1}^2
		+
		\int_0^t\|\nabla\phi_t\|_{L^2}^2\,\mathrm d s
		\le
		C\mathcal M_0
		+
		\int_0^t M(\Phi(s))\,\mathrm d s .
	\end{equation}
\end{lemma}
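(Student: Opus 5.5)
The plan is to obtain \eqref{phi_mu_t_energy} from a second energy identity: test the phase equation \eqref{perturbed_system}$_3$ by $\mu_t$, and use the time-differentiated chemical relation to turn the left-hand side into the dissipation $\|\nabla\phi_t\|_{L^2}^2$. The zero-order part of $\mu$ will then be recovered from Lemma~\ref{Lemma_F03}.

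\textbf{Step 1: the energy identity.} Since $R_t=0$, differentiating \eqref{perturbed_system}$_4$ in time gives
\[
\rho\mu_t=-\Delta\phi_t+\rho F'(\phi)\phi_t+\rho_t\bigl(F(\phi)-\mu\bigr).
\]
Testing \eqref{perturbed_system}$_3$ by $\mu_t$ and using $\partial_n\mu=0$ gives $\int\rho(\phi_t+u\cdot\nabla\phi)\mu_t\,\mathrm dx=-\frac12\frac{\mathrm d}{\mathrm dt}\|\nabla\mu\|_{L^2}^2$. I would then insert the identity above into $\int\rho\phi_t\mu_t\,\mathrm dx$ and integrate by parts, using $\partial_n\phi_t=0$. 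This yields
\[
\frac12\frac{\mathrm d}{\mathrm dt}\|\nabla\mu\|_{L^2}^2+\|\nabla\phi_t\|_{L^2}^2
=-\int\rho F'(\phi)\phi_t^2\,\mathrm dx-\int\rho_t\bigl(F(\phi)-\mu\bigr)\phi_t\,\mathrm dx-\int\rho u\cdot\nabla\phi\,\mu_t\,\mathrm dx .
\]

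\textbf{Step 2: the two lower-order terms.} The first term is bounded by $\|F'(\phi)\|_{L^\infty}\|\sqrt\rho\phi_t\|_{L^2}^2\le M(\Phi)$, since $\phi\in H^3$ is controlled by $\Phi$. For the second term I would use
\[
\|\rho_t\|_{L^2}\,\|F(\phi)-\mu\|_{L^3}\,\|\phi_t\|_{L^6},
\]
together with \eqref{rho_rt_est} and $\|\mu\|_{H^2}^2\le\Phi$. To control $\|\phi_t\|_{L^6}$ I need $\|\phi_t\|_{H^1}$. Lemma~\ref{Lemma_F03} applied to $v=\phi_t$ gives
\[
\|\phi_t\|_{L^2}^2\le C\bigl(\|\nabla\phi_t\|_{L^2}^2+\|\rho\|_{L^1}\|\sqrt\rho\phi_t\|_{L^2}^2\bigr)\le M(\Phi).
\]
Its hypotheses hold uniformly by mass conservation and \eqref{total-energy-M}. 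So the second term is also at most $M(\Phi)$.

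\textbf{Step 3: the term with $\mu_t$ (main obstacle).} This term is the difficulty, because $\mu_t$ is controlled unweighted only through $\mathcal D$, not through $\Phi$. I would move the time derivative off $\mu$:
\[
-\int_0^t\!\!\int\rho u\cdot\nabla\phi\,\mu_t
=-\int\rho u\cdot\nabla\phi\,\mu\,\Big|_{0}^{t}+\int_0^t\!\!\int\partial_s(\rho u\cdot\nabla\phi)\,\mu .
\]
The time derivative $\partial_s(\rho u\cdot\nabla\phi)=\rho_t u\cdot\nabla\phi+\rho u_t\cdot\nabla\phi+\rho u\cdot\nabla\phi_t$ has $L^{6/5}$ norm at most $M(\Phi)\bigl(1+\|\nabla u_t\|_{L^2}+\|\nabla\phi_t\|_{L^2}\bigr)$. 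Its pairing with $\mu\in L^6$ is therefore bounded by $\frac14\|\nabla\phi_t\|_{L^2}^2+\|\nabla u_t\|_{L^2}^2+M(\Phi)$. The $\|\nabla u_t\|_{L^2}^2$ part is integrated using \eqref{ut-closed}.

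For the endpoint term at time $t$, I would not bound $\rho u\cdot\nabla\phi(t)$ by $\Phi(t)$ directly, since that would not close. Instead I write it as its value at $0$ plus the time integral of its derivative, and apply Cauchy--Schwarz in time with $t\le1$:
\[
\|(\rho u\cdot\nabla\phi)(t)\|_{L^{6/5}}^2\le C\mathcal M_0^{k}+\int_0^tM(\Phi)\bigl(1+\|\nabla u_t\|_{L^2}^2+\|\nabla\phi_t\|_{L^2}^2\bigr)\,\mathrm ds .
\]
Here the power $k$ comes from the product of initial norms and is absorbed into the generic constant. The endpoint term is then at most $\varepsilon\|\mu(t)\|_{H^1}^2$ plus this quantity.

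\textbf{Step 4: the full $H^1$ norm of $\mu$ and conclusion.} Lemma~\ref{Lemma_F03} with $v=\mu$ gives
\[
\|\mu\|_{L^2}^2\le C\bigl(\|\nabla\mu\|_{L^2}^2+\|\rho\|_{L^1}\|\sqrt\rho\mu\|_{L^2}^2\bigr),
\]
and the last factor is bounded by \eqref{rho_mu_phit_est}. This lets me absorb the $\varepsilon\|\mu(t)\|_{H^1}^2$ term into the left-hand side. The initial term satisfies $\|\nabla\mu_0\|_{L^2}^2\le\mathcal M_0$. Integrating the identity of Step~1 over $(0,t)$ and collecting Steps 2--4 then gives \eqref{phi_mu_t_energy}, with the $\|\nabla\phi_t\|_{L^2}^2$ pieces in Step~3 taken with small coefficients and absorbed into the dissipation.
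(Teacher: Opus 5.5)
Your Step 1 identity and Steps 2 and 4 are correct. Integrating the term $-\int\rho u\cdot\nabla\phi\,\mu_t$ by parts in time is a workable alternative to the paper's route, but the endpoint estimate in Step 3 does not close as written.

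\textbf{The gap.} You bound $\|\rho u_t\cdot\nabla\phi\|_{L^{6/5}}$ by $M(\Phi)\|\nabla u_t\|_{L^2}$ and then square. This leaves $C_\varepsilon\int_0^t M(\Phi(s))\|\nabla u_t(s)\|_{L^2}^2\,\mathrm ds$ on the right, which is not of the form $C\mathcal M_0+\int_0^tM(\Phi(s))\,\mathrm ds$. The reason is that \eqref{ut-closed} controls only the \emph{unweighted} integral of $\|\nabla u_t\|_{L^2}^2$, and this quantity belongs to $\mathcal D$, not to $\Phi$. Pulling the weight out as $M(\Phi(t))$ reintroduces exactly the $\Phi(t)$-dependence you wanted to avoid.

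Similarly, the piece $C_\varepsilon\int_0^tM(\Phi)\|\nabla\phi_t\|_{L^2}^2$ cannot be ``absorbed into the dissipation'', because its coefficient is large. It is harmless only because $\|\nabla\phi_t\|_{L^2}^2\le\Phi$.

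\textbf{The repair.} Use the weighted velocity derivative, which is part of $\Phi$, together with \eqref{rho_rt_est}:
\[
\|\partial_s(\rho u\cdot\nabla\phi)\|_{L^{6/5}}
\le \|\rho_t\|_{L^2}\|u\|_{L^\infty}\|\nabla\phi\|_{L^3}
+\|\rho\|_{L^\infty}^{1/2}\|\sqrt\rho\,u_t\|_{L^2}\|\nabla\phi\|_{L^3}
+\|\rho\|_{L^\infty}\|u\|_{L^3}\|\nabla\phi_t\|_{L^2}
\le M(\Phi(s)).
\]
The endpoint term is then at most $\varepsilon\|\mu(t)\|_{H^1}^2+C+C_\varepsilon\bigl(\int_0^tM(\Phi)\,\mathrm ds\bigr)^2$. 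Since $t\le1$, $\bigl(\int_0^tM\bigr)^2\le\int_0^tM^2$, which has the required form. The bulk term is likewise bounded pointwise by $M(\Phi)$, so it no longer needs \eqref{ut-closed}.

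\textbf{Comparison with the paper.} The paper does not integrate by parts in time. It substitutes the differentiated chemical relation $\rho\mu_t=-\Delta\phi_t+\rho F'(\phi)\phi_t-\rho_t(\mu-F(\phi))$ into the $\mu_t$ term as well, exactly as you did for $\int\rho\phi_t\mu_t$. This produces $L_1,\dots,L_4$. It then integrates $\int u\cdot\nabla\phi\,\Delta\phi_t$ by parts in space using $\partial_n\phi_t=0$, giving $|L_1|\le\frac18\|\nabla\phi_t\|_{L^2}^2+M(\Phi)$. Every term is bounded pointwise in time and there is no endpoint term, so that route is shorter.

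The term you called the ``main obstacle'' is therefore not an obstacle at the continuum level, where the chemical relation holds pointwise. It is a genuine obstacle only for the Galerkin system, where the paper needs the weighted-projection transfer \eqref{app:weighted-transfer-estimate}. The merit of your repaired argument is that it never pairs the chemical relation with the non-spectral function $u\cdot\nabla\phi$. It would therefore carry over to the semi-discrete setting without that tool.
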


\begin{proof}
	Differentiating \eqref{perturbed_system}$_4$ with respect to $t$
	and testing by $\phi_t$, then testing
	\eqref{perturbed_system}$_3$ by $\mu_t$, we combine the resulting
	identities and integrate by parts to obtain
	\begin{align} \label{final_identity}
			\frac12\frac{\mathrm d}{\mathrm d t}\|\nabla\mu\|_{L^2}^2
			+
			\|\nabla\phi_t\|_{L^2}^2
			={}&
			\int u\cdot\nabla\phi\,\Delta\phi_t\,\mathrm d x
			-
			\int u\cdot\nabla\phi\,\rho_tF(\phi)\,\mathrm d x
            -\int u\cdot\nabla\phi\,\rho F'(\phi)\phi_t\,\mathrm d x  \nonumber
			\\
			&+\int u\cdot\nabla\phi\,\rho_t\mu\,\mathrm d x
			+\int \rho_t\mu\phi_t\,\mathrm d x
			-\int \rho_tF(\phi)\phi_t\,\mathrm d x
            \\
			&-\int \rho F'(\phi)\phi_t^2\,\mathrm d x
			:=\sum_{i=1}^7L_i.  \nonumber
	\end{align}
	We first record that, by Lemma~\ref{Lemma_F03} and
	\eqref{rho_mu_phit_est},
	\[
	\|\phi_t\|_{L^2}
	\le
	C\left(
	\|\nabla\phi_t\|_{L^2}
	+
	\|\sqrt{\rho}\,\phi_t\|_{L^2}
	\right),
	\]
	and
	\begin{equation}\label{est_mu}
	\|\mu\|_{L^2}
	\le
	C\|\nabla\mu\|_{L^2}
	+
	C\|\sqrt{\rho}\,\mu\|_{L^2}.
	\end{equation}
	For $L_1$, after integrating by parts, we obtain from H\"older's and Young's inequalities that
	\[
	\begin{aligned}
		|L_1|
        &=
	   \left|\int \nabla\phi_t\cdot((\nabla u)\nabla\phi)\,\mathrm dx
	   +\int \nabla\phi_t\cdot((u\cdot\nabla)\nabla\phi)\,\mathrm dx\right|\\
		&\le
		\|\nabla\phi_t\|_{L^2}
		\left(
		\|\nabla u\|_{L^3}\|\nabla\phi\|_{L^6}
		+\|u\|_{L^\infty}\|\nabla^2\phi\|_{L^2}
		\right)\\
		&\le
		\frac18\|\nabla\phi_t\|_{L^2}^2+M(\Phi).
	\end{aligned}
	\]
	Similarly, using the preceding estimates, we have
	\[
	\begin{aligned}
		|L_2|
		&\le
		\|u\|_{L^\infty}
		\|\nabla\phi\|_{L^2}
		\|\rho_t\|_{L^2}
		\|F(\phi)\|_{L^\infty}
		\le M(\Phi),
		\\
		|L_3|
		&\le
		\|u\|_{L^\infty}
		\|\nabla\phi\|_{L^2}
		\|\rho\|_{L^\infty}
		\|F'(\phi)\|_{L^\infty}
		\|\phi_t\|_{L^2}
		\le
		\frac18\|\nabla\phi_t\|_{L^2}^2+M(\Phi),
		\\
		|L_4|
		&\le
		\|u\|_{L^\infty}
		\|\nabla\phi\|_{L^\infty}
		\|\rho_t\|_{L^2}
		\|\mu\|_{L^2}
		\le M(\Phi),
		\\
		|L_5|
		&\le
		\|\rho_t\|_{L^2}
		\|\mu\|_{L^3}
		\|\phi_t\|_{L^6}
		\le
		\frac18\|\nabla\phi_t\|_{L^2}^2+M(\Phi),
		\\
		|L_6|
		&\le
		\|\rho_t\|_{L^2}
		\|F(\phi)\|_{L^\infty}
		\|\phi_t\|_{L^2}
		\le
		\frac18\|\nabla\phi_t\|_{L^2}^2+M(\Phi),
		\\
		|L_7|
		&\le
		\|\sqrt{\rho}\,\phi_t\|_{L^2}^2
		\|F'(\phi)\|_{L^\infty}
		\le M(\Phi).
	\end{aligned}
	\]
	Substituting these bounds into \eqref{final_identity} and
	absorbing the terms involving $\|\nabla\phi_t\|_{L^2}^2$
	yield
	\[
	\frac{\mathrm d}{\mathrm dt}\|\nabla\mu\|_{L^2}^2
	+c\|\nabla\phi_t\|_{L^2}^2
	\le M(\Phi)
	\]
	for some constant $c>0$.
	Integrating it over $(0,t)$ and using
	$\|\nabla\mu_0\|_{L^2}^2\le C\mathcal M_0$, we obtain,
	after enlarging $C$ and $M$ if necessary,
	\[
	\|\nabla\mu(t)\|_{L^2}^2
	+\int_0^t\|\nabla\phi_t(s)\|_{L^2}^2\,\mathrm ds
	\le
	C\mathcal M_0+\int_0^t M(\Phi(s))\,\mathrm ds.
	\]
	This, together with Lemma \ref{Lemma_rho_mu_phit} and \eqref{est_mu},
	proves \eqref{phi_mu_t_energy}.
\end{proof}

\begin{lemma}\label{Lemma_phi_H2}
	Assume that $(\rho,u,\phi,\mu)$ is a sufficiently smooth solution to
	\eqref{perturbed_system}-\eqref{B_C}. Then there exists a nondecreasing function
	$M\in\mathcal F$ such that, for any $t\in(0,T)$,
	\begin{equation}\label{phi_H2_low_est}
		\|\phi(t)\|_{H^2}
		\le
		M\left(
		\mathcal M_0+\int_0^tM(\Phi(s))\,\mathrm d s
		\right).
	\end{equation}
\end{lemma}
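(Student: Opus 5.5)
Since \(\partial_n\phi=0\) on \(\partial\Omega\), Neumann elliptic regularity gives
\[
\|\phi\|_{H^2}\le C\left(\|\Delta\phi\|_{L^2}+\|\phi\|_{L^2}\right),
\]
and Lemma~\ref{Lemma_phi_H1} already bounds \(\|\phi\|_{L^2}\) by \(M(\mathcal M_0)\). So everything reduces to bounding \(\|\Delta\phi\|_{L^2}\). The chemical relation \eqref{perturbed_system}\(_4\) gives
\[
-\Delta\phi=\rho\mu-\rho F(\phi)-R .
\]
Hence
\[
\|\Delta\phi\|_{L^2}\le \|\rho\|_{L^\infty}^{1/2}\|\sqrt\rho\,\mu\|_{L^2}+\|\rho F(\phi)\|_{L^2}+\|R\|_{L^2}.
\]
I bound the three terms in turn, each without dividing by \(\rho\).

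For the first term, \(\|\sqrt\rho\,\mu\|_{L^2}^2\) is controlled by Lemma~\ref{Lemma_rho_mu_phit}. Moreover, \(\|\rho\|_{L^\infty}\le C\|\rho\|_{W^{1,q}}\) since \(q>3\), and Lemma~\ref{Lemma_rho_W1q} bounds this by \(\exp\{C\mathcal M_0+\int_0^tM(\Phi)\,\mathrm ds\}\). Both factors are therefore of the form \(M(\mathcal M_0+\int_0^tM(\Phi)\,\mathrm ds)\). The third term satisfies \(\|R\|_{L^2}\le\mathcal M_0^{1/2}\) by \eqref{fixed_residual_H1}.

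The main obstacle is the nonlinear term \(\rho F(\phi)=\rho(\phi^3-\phi)\), whose \(L^2\) norm involves \(\|\phi\|_{L^6}^3\). Lemma~\ref{Lemma_phi_H1} only gives \(\phi\) in \(H^1\hookrightarrow L^6\), and that is exactly enough:
\[
\|\rho\phi^3\|_{L^2}\le\|\rho\|_{L^\infty}\|\phi\|_{L^6}^3\le C\|\rho\|_{L^\infty}\|\phi\|_{H^1}^3\le \|\rho\|_{L^\infty}M(\mathcal M_0).
\]
The linear part is handled the same way. So no interpolation with higher norms of \(\phi\) is needed, and the bound does not involve \(\Phi\) at the current time, only through the time integral.

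If one wanted to avoid the \(L^\infty\) bound on \(\rho\) in this term, an alternative is to test the relation against \(-\Delta\phi\) and use
\[
\int\rho F'(\phi)|\nabla\phi|^2\,\mathrm dx\ge-\int\rho|\nabla\phi|^2\,\mathrm dx .
\]
The direct estimate suffices here, however.

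Collecting the bounds and using that the product and sum of functions in \(\mathcal F\) evaluated at \(\mathcal M_0+\int_0^tM(\Phi)\,\mathrm ds\) is again of this form (enlarging \(M\)), we obtain \eqref{phi_H2_low_est}. The bound is uniform in the lower bound of \(\rho\), since no division by \(\rho\) occurs.
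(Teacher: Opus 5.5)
Your proposal is correct and is essentially the paper's proof. Both apply the Neumann elliptic estimate to the chemical relation, bound $\|\rho\mu\|_{L^2}\le\|\rho\|_{L^\infty}^{1/2}\|\sqrt\rho\,\mu\|_{L^2}$, control the cubic term by $H^1\hookrightarrow L^6$, and draw on Lemmas \ref{Lemma_phi_H1}, \ref{Lemma_rho_W1q}, \ref{Lemma_rho_mu_phit} and \eqref{fixed_residual_H1}. You are right to discard the aside about testing against $-\Delta\phi$: it would produce a $\nabla\rho$ term after integrating by parts, and $\int\rho|\nabla\phi|^2$ still needs $\rho\in L^\infty$, so it would not avoid the $L^\infty$ bound.
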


\begin{proof}
	From \eqref{perturbed_system}$_4$, using the Neumann elliptic estimate together with $\partial_n\phi=0$ on $\partial\Omega$, we obtain
	\[
	\|\phi\|_{H^2}
	\le
	C\left(
	\|\rho\mu\|_{L^2}
	+
	\|\rho(\phi^3-\phi)\|_{L^2}
	+
	\|R\|_{L^2}
	+
	\|\phi\|_{L^2}
	\right).
	\]
	We estimate the terms on the right-hand side. First,
	\[
	\|\rho\mu\|_{L^2}
	\le
	\|\rho\|_{L^\infty}^{1/2}
	\|\sqrt{\rho}\mu\|_{L^2}.
	\]
	Moreover, since $H^1(\Omega)\hookrightarrow L^6(\Omega)$ in three dimensions,
	\[
	\begin{aligned}
		\|\rho(\phi^3-\phi)\|_{L^2}
		\le
		\|\rho\|_{L^\infty}
		\left(
		\|\phi\|_{L^6}^3+\|\phi\|_{L^2}
		\right)
		\le
		C\|\rho\|_{L^\infty}
		\left(
		\|\phi\|_{H^1}^3+\|\phi\|_{H^1}
		\right).
	\end{aligned}
	\]
	By Lemmas \ref{Lemma_phi_H1}, \ref{Lemma_rho_W1q},
	\ref{Lemma_rho_mu_phit}, and \eqref{fixed_residual_H1}, we have
	\[
	\|\phi\|_{H^1}
	+
	\|\sqrt{\rho}\mu\|_{L^2}
	+
	\|\rho\|_{L^\infty}
	+
	\|R\|_{L^2}
	\le
	M\left(
	\mathcal M_0+\int_0^tM(\Phi(s))\,\mathrm d s
	\right).
	\]
	Substituting these estimates into the elliptic estimate yields
	\[
	\|\phi(t)\|_{H^2}
	\le
	M\left(
	\mathcal M_0+\int_0^tM(\Phi(s))\,\mathrm d s
	\right).
	\]
	This completes the proof.
\end{proof}

\subsection{Part V: High-order joint estimate for the phase variables}
The main difficulty in the high-order phase estimate is the occurrence of the terms
	\[
	\int \phi_{tt}\rho_t
	\left[\mu-(\phi^3-\phi)\right]\,\mathrm d x
	-
	\int \phi_{tt}\rho(3\phi^2-1)\phi_t\,\mathrm d x,
	\]
	for which the available estimates do not provide direct control.
	We therefore rewrite their sum as
	$\frac12\frac{\mathrm d}{\mathrm dt}\mathcal B(t)$
	plus remainders involving at most first time derivatives.
	This motivates the corrected energy
	$\|\nabla\phi_t\|_{L^2}^2-\mathcal B(t)$,
	with $\mathcal B$ defined in \eqref{def_phase_correction}.
	Adding a suitable multiple of
	$\|\sqrt{\rho}\phi_t\|_{L^2}^2$
	then restores coercivity up to terms controlled by the preceding estimates.

\begin{lemma}\label{Lemma_phase_gradient_modified}
	Assume that $(\rho,u,\phi,\mu)$ is a sufficiently smooth solution to
	\eqref{perturbed_system}-\eqref{B_C}. For every sufficiently small
	$\varepsilon>0$, there exist $c_1>0$ and a nondecreasing function
	$M\in\mathcal F$, independent of $\delta$, such that, for any $t\in(0,T)$,
	\begin{equation}\label{phit_mut_modified_joint}
		\frac{\mathrm d}{\mathrm d t}
		\left(
		\|\nabla\phi_t\|_{L^2}^2-\mathcal B(t)
		\right)
		+
		c_1\|\nabla\mu_t\|_{L^2}^2
		\le
		\varepsilon\|\nabla u_t\|_{L^2}^2
		+
		M(\Phi(t)),
	\end{equation}
	where
	\begin{equation}\label{def_phase_correction}
		\begin{aligned}
			\mathcal B(t):={}&
			2\int \nabla\phi_t\cdot\rho u
			\left[\mu-(\phi^3-\phi)\right]\,\mathrm d x
            -\int \rho(3\phi^2-1)\phi_t^2\,\mathrm d x
			\\
            &+2\int \phi_t\rho u\cdot
			\left[
			\nabla\mu-(3\phi^2-1)\nabla\phi
			\right]\,\mathrm d x.
		\end{aligned}
	\end{equation}
\end{lemma}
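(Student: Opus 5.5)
We will differentiate the phase equation \eqref{perturbed_system}$_3$ in time, test it with $\mu_t$, differentiate the chemical relation \eqref{perturbed_system}$_4$ twice in time, and test it with $\phi_t$. Since $R_t=0$, the residual drops out entirely. Differentiating $\rho\mu=-\Delta\phi+\rho F(\phi)+R$ in $t$ gives
\[
\rho\mu_t=-\Delta\phi_t+\rho F'(\phi)\phi_t-\rho_t[\mu-F(\phi)] .
\]
Testing this by $\phi_{tt}$ produces $\frac12\frac{\mathrm d}{\mathrm dt}\|\nabla\phi_t\|_{L^2}^2$, together with $\int\rho\mu_t\phi_{tt}$ and the two dangerous terms singled out before the lemma. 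Testing the $t$-differentiated phase equation
\[
\rho(\phi_{tt}+u\cdot\nabla\phi_t)+\rho u_t\cdot\nabla\phi+\rho_t(\phi_t+u\cdot\nabla\phi)=\Delta\mu_t
\]
by $\mu_t$ gives $-\|\nabla\mu_t\|_{L^2}^2$ on one side and $\int\rho\phi_{tt}\mu_t$ on the other. Subtracting the two identities cancels the term $\int\rho\mu_t\phi_{tt}$, which is not controlled. The result is an identity of the form
\[
\frac12\frac{\mathrm d}{\mathrm dt}\|\nabla\phi_t\|^2+\|\nabla\mu_t\|^2=\int\phi_{tt}\rho_t[\mu-F]-\int\phi_{tt}\rho F'\phi_t+\text{(terms with }\rho u\cdot\nabla\phi_t\,\mu_t,\ \rho u_t\cdot\nabla\phi\,\mu_t,\ \rho_t(\phi_t+u\cdot\nabla\phi)\mu_t).
\]

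Next we remove $\phi_{tt}$. We write $\rho_t=-\operatorname{div}(\rho u)$ and integrate by parts, using $u=0$ on $\partial\Omega$, to get
\[
\int\phi_{tt}\rho_t[\mu-F]=\int\nabla\phi_{tt}\cdot\rho u[\mu-F]+\int\phi_{tt}\rho u\cdot[\nabla\mu-F'\nabla\phi].
\]
The first piece is $\frac{\mathrm d}{\mathrm dt}\int\nabla\phi_t\cdot\rho u[\mu-F]$ minus terms in which the time derivative falls on $\rho$, $u$, $\mu$ or $\phi$. The second piece is $\frac{\mathrm d}{\mathrm dt}\int\phi_t\rho u\cdot[\nabla\mu-F'\nabla\phi]$ minus similar terms. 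For the other dangerous term we use
\[
-\int\phi_{tt}\rho F'\phi_t=-\frac12\frac{\mathrm d}{\mathrm dt}\int\rho F'\phi_t^2+\frac12\int(\rho_tF'+6\rho\phi\phi_t)\phi_t^2 .
\]
Together these produce exactly $\frac12\frac{\mathrm d}{\mathrm dt}\mathcal B$, which we move to the left. The remainders contain $u_t$, $\mu_t$, $\phi_t$, $\rho_t$ and $\nabla\mu_t$, but no second time derivatives.

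The remainders are estimated with Hölder, Sobolev and Young, using Lemmas \ref{Lemma_u_H2} and \ref{Lemma_F03}, \eqref{rho_rt_est}, $\|\phi\|_{H^3}$ and $\|\mu\|_{H^2}$, all bounded by $\Phi$.

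\begin{itemize}
\item Terms containing $u_t$, such as $\int\nabla\phi_t\cdot\rho u_t[\mu-F]$ and $\int\rho u_t\cdot\nabla\phi\,\mu_t$, are bounded by $\varepsilon\|\nabla u_t\|^2+M(\Phi)$ or absorbed into $\|\nabla\mu_t\|^2$.
\item Terms containing $\mu_t$ without a weight are bounded through $\|\mu_t\|_{L^6}\le C(\|\nabla\mu_t\|+\|\sqrt\rho\mu_t\|)$.
\item The weighted factor $\|\sqrt\rho\mu_t\|$ is controlled through the differentiated chemical relation tested by $\mu_t$: $\|\sqrt\rho\mu_t\|^2\le \|\nabla\phi_t\|\|\nabla\mu_t\|+M(\Phi)(\ldots)$. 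Hence it is absorbed by $\frac12\|\nabla\mu_t\|^2+M(\Phi)$.
\item Terms like $\int\nabla\phi_t\cdot\rho u\,\mu_t$ are handled with $\|\mu_t\|_{L^6}$.
\item Terms with $\nabla\mu_t$ against $\rho u\cdot\nabla\phi_t$ are handled with $\|u\|_{L^\infty}$ and Young.
\item Terms like $\int\phi_t\rho u\cdot\nabla\mu_t$ need $\|\phi_t\|_{L^2}\le C(\|\nabla\phi_t\|+\|\sqrt\rho\phi_t\|)\le M(\Phi)$.
\item $\int\rho_t\phi_t\mu_t$ uses $\|\rho_t\|_{L^2}\|\phi_t\|_{L^3}\|\mu_t\|_{L^6}$.
\end{itemize}

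Multiplying the identity by 2 gives \eqref{phit_mut_modified_joint} with $c_1=1$ after absorption.

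The main obstacle is the bookkeeping in the $\phi_{tt}$-removal step. Every remainder must involve $\mu_t$ only through $\nabla\mu_t$ or the weighted norm $\|\sqrt\rho\mu_t\|$. Terms like $\int\nabla\phi_t\cdot\rho u\,\mu_t$ need the $L^6$ bound of $\mu_t$, which in turn relies on Lemma \ref{Lemma_F03} and the weighted control of $\sqrt\rho\mu_t$ derived from the differentiated chemical relation. The coefficient of $\|\nabla u_t\|^2$ must stay at $\varepsilon$, so that it can later be absorbed using Lemma \ref{Lemma_ut}.
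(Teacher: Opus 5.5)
Your proposal is correct and follows the paper's proof. It uses the same identity \eqref{energy_phit_joint}, obtained from the differentiated phase equation tested by $\mu_t$ and the once-differentiated chemical relation tested by $\phi_{tt}$ (your opening phrase ``twice in time, tested with $\phi_t$'' is a slip; the computation you actually carry out is the right one), the same spatial-then-temporal integration by parts producing $\frac12\frac{\mathrm d}{\mathrm dt}\mathcal B$, and the same kind of remainder bounds. The only variation is how you control the unweighted $\mu_t$: you use Lemma \ref{Lemma_F03} together with $\|\sqrt\rho\,\mu_t\|_{L^2}^2\le M(\Phi)(1+\|\nabla\mu_t\|_{L^2})$, obtained by testing the differentiated chemical relation with $\mu_t$, whereas the paper integrates that relation over $\Omega$ to bound $\int\rho\mu_t\,\mathrm dx$ and applies Sobolev--Poincar\'e to reach \eqref{mut_L3_joint}; both give $\|\mu_t\|_{L^p}\le M(\Phi)(1+\|\nabla\mu_t\|_{L^2})$ for $p\in\{3,6\}$ and close the estimate equally well.
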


\begin{proof}
	We differentiate \eqref{perturbed_system}$_3$ with respect to $t$
	and test the resulting equation by $\mu_t$.
	Using the time derivative of \eqref{perturbed_system}$_4$
	and integrating by parts, we obtain
	\begin{equation}\label{energy_phit_joint}
		\begin{aligned}
			&\frac12\frac{\mathrm d}{\mathrm d t}
			\|\nabla\phi_t\|_{L^2}^2
			+
			\|\nabla\mu_t\|_{L^2}^2
			\\
			={}&
			-\int \rho_t\phi_t\mu_t\,\mathrm d x
			-\int \rho_tu\cdot\nabla\phi\,\mu_t\,\mathrm d x
			-\int \rho u_t\cdot\nabla\phi\,\mu_t\,\mathrm d x
			-\int \rho u\cdot\nabla\phi_t\,\mu_t\,\mathrm d x
			\\
			&+
			\int \phi_{tt}\rho_t
			\left[\mu-(\phi^3-\phi)\right]\,\mathrm d x
			-
			\int \phi_{tt}\rho(3\phi^2-1)\phi_t\,\mathrm d x
			:=\sum_{i=1}^6 H_i .
		\end{aligned}
	\end{equation}
	
	We first estimate $\mu_t$. Integrating the differentiated
	chemical relation and using H\"older's inequality, we obtain
	\[
	\left|\int\rho\mu_t\,\mathrm dx\right|
	=
	\left|
	\int\rho_t(\phi^3-\phi-\mu)\,\mathrm dx
	+\int\rho(3\phi^2-1)\phi_t\,\mathrm dx
	\right|
	\le M(\Phi(t)).
	\]
	Let $\overline{\mu_t}:=|\Omega|^{-1}\int\mu_t\,\mathrm dx$.
	Since $m:=\int\rho\,\mathrm dx
	=\int\rho_0\,\mathrm dx>0$ is conserved,
	H\"older's and Sobolev--Poincar\'e inequalities give
	\[
	\begin{aligned}
		m|\overline{\mu_t}|
		&\le
		\left|\int\rho\mu_t\,\mathrm dx\right|
		+\left|\int\rho(\mu_t-\overline{\mu_t})\,\mathrm dx\right|\\
		&\le
		\left|\int\rho\mu_t\,\mathrm dx\right|
		+C\|\rho\|_{L^{6/5}}\|\nabla\mu_t\|_{L^2}.
	\end{aligned}
	\]
	Consequently,
	\begin{equation}\label{mut_L3_joint}
		\begin{aligned}
			\|\mu_t\|_{L^3}
			&\le
			\|\mu_t-\overline{\mu_t}\|_{L^3}
			+|\Omega|^{1/3}|\overline{\mu_t}|
			\le
			C\|\nabla\mu_t\|_{L^2}
			+C|\overline{\mu_t}|\\
			&\le
			C(1+\|\rho\|_{L^{6/5}})\|\nabla\mu_t\|_{L^2}
			+C\left|\int\rho\mu_t\,\mathrm dx\right|\\
			&\le
			M(\Phi(t))
			\left(1+\|\nabla\mu_t\|_{L^2}\right).
		\end{aligned}
	\end{equation}
	Moreover, Lemma \ref{Lemma_F03},
	H\"older's inequality, and Sobolev embedding yield
	\begin{equation}\label{phit_L6_joint}
		\begin{aligned}
			\|\phi_t\|_{L^6}
			&\le C\left(
			\|\nabla\phi_t\|_{L^2}
			+\int\rho|\phi_t|\,\mathrm dx
			\right)\\
			&\le C\left(
			\|\nabla\phi_t\|_{L^2}
			+\|\sqrt{\rho}\phi_t\|_{L^2}
			\right)
			\le M(\Phi(t)).
		\end{aligned}
	\end{equation}
	
	For $H_i, i=1,\cdots, 4$, relations
	\eqref{mut_L3_joint} and \eqref{phit_L6_joint} give
	\[
	\begin{aligned}
		|H_1|
		&\le
		\|\rho_t\|_{L^2}
		\|\phi_t\|_{L^6}
		\|\mu_t\|_{L^3}
		\le
		M(\Phi(t))
		\left(
		\|\nabla\mu_t\|_{L^2}
		+
		1
		\right)
		\le
		\varepsilon\|\nabla\mu_t\|_{L^2}^2
		+
		M(\Phi(t)),\\
		|H_2|
		&\le
		\|\rho_t\|_{L^2}
		\|u\|_{L^\infty}
		\|\nabla\phi\|_{L^6}
		\|\mu_t\|_{L^3}
		\le
		\varepsilon\|\nabla\mu_t\|_{L^2}^2
		+
		M(\Phi(t)),\\
		|H_3|
		&\le
		\|\sqrt{\rho}u_t\|_{L^2}
		\|\rho\|_{L^\infty}^{1/2}
		\|\nabla\phi\|_{L^6}
		\|\mu_t\|_{L^3}
		\le
		\varepsilon\|\nabla\mu_t\|_{L^2}^2
		+
		M(\Phi(t)),\\
		|H_4|
		&\le
		\|\rho\|_{L^\infty}
		\|u\|_{L^6}
		\|\nabla\phi_t\|_{L^2}
		\|\mu_t\|_{L^3}
		\le
		\varepsilon\|\nabla\mu_t\|_{L^2}^2
		+
		M(\Phi(t)).
	\end{aligned}
	\]
	Next we deal with $H_5$. From \eqref{perturbed_system}$_1$,
	integrating by parts in time gives
	\[
	\begin{aligned}
		H_5
		={}&
		\frac{\mathrm d}{\mathrm d t}
		\bigg\{
		\int \nabla\phi_t\cdot\rho u
		\left[\mu-(\phi^3-\phi)\right]\,\mathrm d x
		+
		\int \phi_t\rho u\cdot
		\left[
		\nabla\mu-(3\phi^2-1)\nabla\phi
		\right]\,\mathrm d x
		\bigg\}  \\
		&-
		\int \nabla\phi_t\cdot\rho_tu
		\left[\mu-(\phi^3-\phi)\right]\,\mathrm d x
		-
		\int \nabla\phi_t\cdot\rho u_t
		\left[\mu-(\phi^3-\phi)\right]\,\mathrm d x  \\
		&-
		\int \nabla\phi_t\cdot\rho u
		\left[
		\mu_t-(3\phi^2-1)\phi_t
		\right]\,\mathrm d x
		-
		\int \phi_t\rho_t u\cdot
		\left[
		\nabla\mu-(3\phi^2-1)\nabla\phi
		\right]\,\mathrm d x  \\
		&-
		\int \phi_t\rho u_t\cdot
		\left[
		\nabla\mu-(3\phi^2-1)\nabla\phi
		\right]\,\mathrm d x
		-
		\int \phi_t\rho u\cdot
		\left[
		\nabla\mu_t
		-(3\phi^2-1)\nabla\phi_t
		-6\phi\phi_t\nabla\phi
		\right]\,\mathrm d x  \\
		=:&
		\frac{\mathrm d}{\mathrm d t}\bigg\{
		\int \nabla\phi_t\cdot\rho u
		\left[\mu-(\phi^3-\phi)\right]\,\mathrm d x
		+
		\int \phi_t\rho u\cdot
		\left[
		\nabla\mu-(3\phi^2-1)\nabla\phi
		\right]\,\mathrm d x
		\bigg\}
		+
		\sum_{j=1}^6 R_j ,
	\end{aligned}
	\]
	By \eqref{mut_L3_joint}, \eqref{phit_L6_joint}, Sobolev embedding and Young's inequality, we have
	\[
	\begin{aligned}
		|R_1|
		&\le
		\|\nabla\phi_t\|_{L^2}
		\|\rho_t\|_{L^2}
		\|u\|_{L^\infty}
		\left\|\mu-(\phi^3-\phi)\right\|_{L^\infty}
		\le
		M(\Phi(t)),\\
		|R_2|
		&\le
		\|\nabla\phi_t\|_{L^2}
		\|\sqrt{\rho}u_t\|_{L^2}
		\|\rho\|_{L^\infty}^{1/2}
		\left\|\mu-(\phi^3-\phi)\right\|_{L^\infty}
		\le
		M(\Phi(t)),\\
		|R_3|
		&\le
		\|\nabla\phi_t\|_{L^2}
		\|\rho\|_{L^\infty}
		\|u\|_{L^6}
		\|\mu_t\|_{L^3}
		+
		\|\nabla\phi_t\|_{L^2}
		\|\rho\|_{L^\infty}
		\|u\|_{L^3}
		\|3\phi^2-1\|_{L^\infty}
		\|\phi_t\|_{L^6}  \\
		&\le
		M(\Phi(t))
		\left(
		\|\nabla\mu_t\|_{L^2}
		+
		1
		\right)
		\le
		\varepsilon\|\nabla\mu_t\|_{L^2}^2
		+
		M(\Phi(t)),\\
		|R_4|
		&\le
		\|\phi_t\|_{L^6}
		\|\rho_t\|_{L^2}
		\|u\|_{L^\infty}
		\left\|
		\nabla\mu-(3\phi^2-1)\nabla\phi
		\right\|_{L^3}
		\le
		M(\Phi(t)),\\
		|R_5|
		&\le
		\|\phi_t\|_{L^6}
		\|\sqrt{\rho}u_t\|_{L^2}
		\|\rho\|_{L^\infty}^{1/2}
		\left\|
		\nabla\mu-(3\phi^2-1)\nabla\phi
		\right\|_{L^3}
		\le
		M(\Phi(t)).
	\end{aligned}
	\]
	Finally, for $R_6$, we estimate the three terms separately:
	\[
	\begin{aligned}
		|R_6|
		&\le
		\|\phi_t\|_{L^6}
		\|\rho\|_{L^\infty}
		\|u\|_{L^3}
		\|\nabla\mu_t\|_{L^2}
		+
		\|\phi_t\|_{L^6}
		\|\rho\|_{L^\infty}
		\|u\|_{L^3}
		\|3\phi^2-1\|_{L^\infty}
		\|\nabla\phi_t\|_{L^2}  \\
		&\quad
		+
		C\|\phi_t\|_{L^6}^2
		\|\rho\|_{L^\infty}
		\|u\|_{L^6}
		\|\phi\|_{L^\infty}
		\|\nabla\phi\|_{L^2}  \\
		&\le
		M(\Phi(t))
		\|\nabla\mu_t\|_{L^2}
		+
		M(\Phi(t)) \\
		&\le
		\varepsilon\|\nabla\mu_t\|_{L^2}^2
		+
		M(\Phi(t)).
	\end{aligned}
	\]
	Consequently,
	\begin{equation}\label{H5_joint_est}
		\begin{aligned}
			H_5
			\le{}&
			\frac{\mathrm d}{\mathrm d t}\bigg\{
			\int \nabla\phi_t\cdot\rho u
			\left[\mu-(\phi^3-\phi)\right]\,\mathrm d x
			+
			\int \phi_t\rho u\cdot
			\left[
			\nabla\mu-(3\phi^2-1)\nabla\phi
			\right]\,\mathrm d x
			\bigg\}
			+
			\varepsilon\|\nabla\mu_t\|_{L^2}^2
			+
			M(\Phi(t)).
		\end{aligned}
	\end{equation}
	For $H_6$, a similar argument shows
	\begin{align}
		H_6
		&=
		-\frac12\frac{\mathrm d}{\mathrm d t}
		\int \rho(3\phi^2-1)\phi_t^2\,\mathrm d x
		+
		\frac12\int \rho_t(3\phi^2-1)\phi_t^2\,\mathrm d x
		+
		3\int \rho\phi\phi_t^3\,\mathrm d x \\\nonumber
        &\le
        -\frac12\frac{\mathrm d}{\mathrm d t}
		\int \rho(3\phi^2-1)\phi_t^2\,\mathrm d x
		+
        C\|\rho_t\|_{L^2}
		\|3\phi^2-1\|_{L^6}
		\|\phi_t\|_{L^6}^2
        +
        C\|\rho\|_{L^\infty}^{1/2}
		\|\phi\|_{L^6}
		\|\sqrt{\rho}\phi_t\|_{L^2}
		\|\phi_t\|_{L^6}^2\\
        &\le
        -\frac12\frac{\mathrm d}{\mathrm d t}
		\int \rho(3\phi^2-1)\phi_t^2\,\mathrm d x
		+
		M(\Phi(t)).\label{H6_joint_est}
	\end{align}
	Combining the estimates for $H_1,\ldots,H_6$, and choosing
	$\varepsilon>0$ sufficiently small, we obtain  \eqref{phit_mut_modified_joint}.
\end{proof}

\begin{lemma}\label{Lemma_phase_high_joint}
	Assume that $(\rho,u,\phi,\mu)$ is a sufficiently smooth solution to
	\eqref{perturbed_system}-\eqref{B_C}. Then there exists a nondecreasing function
	$M\in\mathcal F$ such that, for any $t\in(0,T)$,
	\begin{equation}\label{phase_high_joint_est}
		\begin{aligned}
			&\|\nabla\phi_t(t)\|_{L^2}^2
			+
			\|\sqrt{\rho}\phi_t(t)\|_{L^2}^2
			+
			\int_0^t
			\left(
			\|\nabla\mu_t\|_{L^2}^2
			+
			\|\sqrt{\rho}\mu_t\|_{L^2}^2
			\right)\,\mathrm d s
			\le
			M\left(
			\mathcal M_0
			+
			\int_0^tM(\Phi(s))\,\mathrm d s
			\right).
		\end{aligned}
	\end{equation}
\end{lemma}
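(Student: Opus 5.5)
The plan is to integrate the differential inequality \eqref{phit_mut_modified_joint} of Lemma~\ref{Lemma_phase_gradient_modified} in time. To the corrected energy $\|\nabla\phi_t\|_{L^2}^2-\mathcal B(t)$ I will add two ingredients:
\begin{enumerate}
\item an evolution estimate for $\|\sqrt\rho\,\phi_t\|_{L^2}^2$, which restores coercivity;
\item a pointwise-in-time identity for $\|\sqrt\rho\,\mu_t\|_{L^2}^2$, which converts $\nabla\mu_t$-dissipation into weighted $\mu_t$-dissipation.
\end{enumerate}
Throughout, every quantity already controlled by Lemmas~\ref{Lemma_energy}--\ref{Lemma_phi_H2} is called \emph{lower order}. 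These are $\|\rho\|_{W^{1,q}}$, $\|\phi\|_{H^2}$, $\|\mu\|_{H^1}$, $\|\nabla u\|_{L^2}$, $\|\sqrt\rho\,u_t\|_{L^2}$ and $\|R\|_{H^1}$. Each is bounded by $N(t):=M(\mathcal M_0+\int_0^tM(\Phi)\,\mathrm ds)$. The term $\varepsilon\int_0^t\|\nabla u_t\|_{L^2}^2\,\mathrm ds$ coming from \eqref{phit_mut_modified_joint} is bounded by \eqref{ut-closed}.

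\textbf{Step 1: the weighted phase derivative.} Differentiate \eqref{perturbed_system}$_3$ in $t$ and test by $\phi_t$. Using \eqref{perturbed_system}$_1$, this gives
\[
\frac12\frac{\mathrm d}{\mathrm dt}\|\sqrt\rho\,\phi_t\|_{L^2}^2=-\int\nabla\mu_t\cdot\nabla\phi_t\,\mathrm dx+(\text{terms in }\rho_t,u,u_t,\nabla\phi,\phi_t).
\]
The first term is at most $\varepsilon'\|\nabla\mu_t\|_{L^2}^2+C_{\varepsilon'}\|\nabla\phi_t\|_{L^2}^2$, and $\|\nabla\phi_t\|_{L^2}^2\le\Phi$. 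The remaining terms are handled as $H_1$--$H_4$ were, via \eqref{mut_L3_joint} and \eqref{phit_L6_joint}, giving $\varepsilon'\|\nabla\mu_t\|_{L^2}^2+M(\Phi)$.

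For the initial values I use Remark~\ref{Remark 1.1}. At positive density, \eqref{perturbed_system}$_3$ and \eqref{App_C_C} give $\phi_t(0)=h_0-u_0\cdot\nabla\phi_0\in H^1$. Hence $\|\nabla\phi_t(0)\|_{L^2}^2+\|\sqrt{\rho_0}\phi_t(0)\|_{L^2}^2\le C\mathcal M_0^{3}$, and $|\mathcal B(0)|\le M(\mathcal M_0)$ by the estimates of Step 2 at $t=0$.

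\textbf{Step 2: controlling $\mathcal B$.} This is the main obstacle: $\mathcal B$ is quadratic in $\phi_t$, and only $\int_0^t\|\sqrt\rho\phi_t\|^2$ was known before. Each piece of $\mathcal B$ is bounded by Hölder's inequality with all spatial derivatives kept at low order.
\begin{itemize}
\item First term: $\|\nabla\phi_t\|_{L^2}\|\rho\|_{L^\infty}\|u\|_{L^6}\|\mu-F(\phi)\|_{L^3}\le\tfrac14\|\nabla\phi_t\|_{L^2}^2+N$.
\item Third term: $\|\phi_t\|_{L^6}\|\rho\|_{L^\infty}\|u\|_{L^6}\|\nabla\mu-F'(\phi)\nabla\phi\|_{L^{3/2}}$, where $\|F'(\phi)\nabla\phi\|_{L^{3/2}}\le\|\phi\|_{L^{6}}^2\|\nabla\phi\|_{L^6}$.
\item Middle term: $\int\rho\phi^2\phi_t^2\le\|\sqrt\rho\phi_t\|_{L^2}\|\rho\|_{L^\infty}^{1/2}\|\phi\|_{L^{12}}^2\|\phi_t\|_{L^6}$.
\end{itemize}
Since $\|\phi_t\|_{L^6}\le C(\|\nabla\phi_t\|_{L^2}+\|\sqrt\rho\phi_t\|_{L^2})$ by Lemma~\ref{Lemma_F03}, all three pieces together give
\[
|\mathcal B(t)|\le\tfrac12\|\nabla\phi_t\|_{L^2}^2+N(t)\bigl(1+\|\sqrt\rho\phi_t\|_{L^2}^2\bigr).
\]

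\textbf{Step 3: closing.} Integrate \eqref{phit_mut_modified_joint} and insert Step 2:
\[
\tfrac12\|\nabla\phi_t(t)\|_{L^2}^2+c_1\int_0^t\|\nabla\mu_t\|_{L^2}^2\le N\bigl(1+\|\sqrt\rho\phi_t(t)\|_{L^2}^2\bigr)+\dots
\]
Step 1 with $\varepsilon'=c_1/(4N)$ gives $\|\sqrt\rho\phi_t(t)\|_{L^2}^2\le C\mathcal M_0^3+\varepsilon'\int_0^t\|\nabla\mu_t\|^2+\int_0^t M(\Phi)$, with the factor $C_{\varepsilon'}$ absorbed into $M$. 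Substituting, the $\nabla\mu_t$ term is absorbed, and the product of $N$ with $\mathcal M_0+\int_0^tM(\Phi)$ is again of the form $M(\mathcal M_0+\int_0^tM(\Phi))$.

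If the $t$-dependence of $N$ inside $\varepsilon'$ proves awkward, I will instead multiply the Step 1 inequality by the nondecreasing weight $N(t)$ before adding it. This introduces only the harmless extra term $N'(t)\|\sqrt\rho\phi_t\|_{L^2}^2\le M(\Phi)\Phi$.

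Finally, test the time-differentiated \eqref{perturbed_system}$_4$ by $\mu_t$:
\[
\|\sqrt\rho\mu_t\|_{L^2}^2=\int\nabla\phi_t\cdot\nabla\mu_t+\int\rho F'(\phi)\phi_t\mu_t-\int\rho_t(\mu-F(\phi))\mu_t.
\]
By \eqref{mut_L3_joint}, the right-hand side is at most $C\|\nabla\mu_t\|_{L^2}^2+M(\Phi)$. Integrating in time then yields the $\sqrt\rho\,\mu_t$ part of \eqref{phase_high_joint_est}.
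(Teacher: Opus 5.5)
Your proof is correct, but it closes the estimate by a different mechanism than the paper's.

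\textbf{Organisation of the weighted estimates.} The paper tests the differentiated phase equation by $\phi_t$ and immediately rewrites $-\int\nabla\mu_t\cdot\nabla\phi_t$ using the differentiated chemical relation. A single identity, \eqref{rho_phit_identity_joint}, then carries both $\frac{\mathrm d}{\mathrm dt}\|\sqrt\rho\phi_t\|_{L^2}^2$ and the dissipation $\|\sqrt\rho\mu_t\|_{L^2}^2$. Adding your Step~1 identity to your final $\mu_t$-identity gives exactly this identity. You instead bound that term by Young's inequality and $\|\nabla\phi_t\|_{L^2}^2\le\Phi$, and recover $\int_0^t\|\sqrt\rho\mu_t\|_{L^2}^2$ afterwards. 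That is fine.

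\textbf{Coercivity.} This is the real difference. The paper never bounds $3\int\rho\phi^2\phi_t^2$ from above. The functional $-\mathcal B$ contains $+\int\rho(3\phi^2-1)\phi_t^2\ge-\|\sqrt\rho\phi_t\|_{L^2}^2$. So adding a \emph{fixed} multiple $\kappa>2$ of the weighted inequality makes $\|\nabla\phi_t\|_{L^2}^2-\mathcal B+\kappa\|\sqrt\rho\phi_t\|_{L^2}^2$ coercive up to lower-order terms. The result is a differential inequality with constant coefficients.

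You bound all of $\mathcal B$ in absolute value. This leaves the solution-dependent factor $N(t)$ in front of $\|\sqrt\rho\phi_t(t)\|_{L^2}^2$, and forces the $t$-dependent choice $\varepsilon'=c_1/(4N(t))$. That choice is legitimate: the inequality is proved at each fixed $t$, and $C_{\varepsilon'}\sim N(t)/c_1$ is constant in $s$. It can therefore be pulled out of the time integral, giving $N(t)^2\int_0^t\Phi\,\mathrm ds$. This is still of the form $M(\mathcal M_0+\int_0^tM(\Phi)\,\mathrm ds)$, so your weighted-multiplier fallback is unnecessary.

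\textbf{What each route buys.} The paper's route is shorter, but it relies on the structural bound $\Psi''\ge-1$ of the quartic potential. Yours uses no sign information on $\Psi''$, at the cost of a more nonlinear dependence on the lower-order bounds.
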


\begin{proof}
	Testing the time derivative of \eqref{perturbed_system}$_3$
	by $\phi_t$, we use the time
	derivative of \eqref{perturbed_system}$_4$ and integrate
	by parts to obtain
	\begin{equation}\label{rho_phit_identity_joint}
		\begin{aligned}
			\frac12\frac{\mathrm d}{\mathrm d t}
			\|\sqrt{\rho}\phi_t\|_{L^2}^2
			+
			\|\sqrt{\rho}\mu_t\|_{L^2}^2
			={}&
			-\int \rho_t\mu\mu_t\,\mathrm d x
			+
			\int \rho_t(\phi^3-\phi)\mu_t\,\mathrm d x  \\
			&+
			\int \rho(3\phi^2-1)\phi_t\mu_t\,\mathrm d x
			-
			\int\rho_t\phi_t^2\,\mathrm d x  \\
			&-
			\int\rho_tu\cdot\nabla\phi\,\phi_t\,\mathrm d x
			-
			\int\rho u_t\cdot\nabla\phi\,\phi_t\,\mathrm d x  \\
			=:& \sum_{i=1}^6 K_i .
		\end{aligned}
	\end{equation}
	We estimate the right-hand side of \eqref{rho_phit_identity_joint}.
	By Lemma \ref{Lemma_F03}, H\"older's inequality, Sobolev embeddings and Young's inequality, for every $\varepsilon>0$, we have
	\[
	\begin{aligned}
		|K_1|+|K_2|
		&\le
		\|\rho_t\|_{L^2}
		\left(
		\|\mu\|_{L^\infty}
		+
		\|\phi^3-\phi\|_{L^\infty}
		\right)
		\|\mu_t\|_{L^2} \\
		&\le
		M(\Phi(t))(\|\nabla\mu_t\|_{L^2}
		+
		\|\sqrt{\rho}\mu_t\|_{L^2})
		\le
		\varepsilon\|\sqrt{\rho}\mu_t\|_{L^2}^2
		+
		\varepsilon\|\nabla\mu_t\|_{L^2}^2
		+
		M(\Phi(t)),\\
		|K_3|
		&\le
		\|3\phi^2-1\|_{L^\infty}
		\|\sqrt{\rho}\phi_t\|_{L^2}
		\|\sqrt{\rho}\mu_t\|_{L^2}
		\le
		\varepsilon\|\sqrt{\rho}\mu_t\|_{L^2}^2
		+
		M(\Phi(t)) ,\\
		|K_4|
		&\le
		\|\rho_t\|_{L^2}
		\|\phi_t\|_{L^4}^2
		\le
		M(\Phi(t)), \\
		|K_5|
		&\le
		\|\rho_t\|_{L^2}
		\|u\|_{L^\infty}
		\|\nabla\phi\|_{L^3}
		\|\phi_t\|_{L^6}
		\le
		M(\Phi(t)),
	\end{aligned}
	\]
	and
	\[
	\begin{aligned}
		|K_6|
		&\le
		\|\sqrt{\rho}u_t\|_{L^2}
		\|\sqrt{\rho}\phi_t\|_{L^2}
		\|\nabla\phi\|_{L^\infty}
		\le
		M(\Phi(t)).
	\end{aligned}
	\]
	Substituting the above estimates into \eqref{rho_phit_identity_joint} and
	choosing $\varepsilon>0$ sufficiently small, we obtain
	\begin{equation}\label{rho_phit_diff_joint}
		\frac{\mathrm d}{\mathrm d t}
		\|\sqrt{\rho}\phi_t\|_{L^2}^2
		+
		c\|\sqrt{\rho}\mu_t\|_{L^2}^2
		\le
		\varepsilon\|\nabla\mu_t\|_{L^2}^2
		+
		M(\Phi(t)).
	\end{equation}
	Here $c>0$ is independent of the approximation parameters.

	For a fixed $\kappa>2$, choose $\varepsilon>0$ sufficiently small such that
	the term $\kappa\varepsilon\|\nabla\mu_t\|_{L^2}^2$
	can be absorbed by the left-hand side of
	\eqref{phit_mut_modified_joint}. Adding $\kappa$ times
	\eqref{rho_phit_diff_joint} to \eqref{phit_mut_modified_joint}, we obtain
	\begin{equation}\label{joint_phase_energy}
		\begin{aligned}
			\frac{\mathrm d}{\mathrm d t}
			\left(
			\|\nabla\phi_t(t)\|_{L^2}^2
			-\mathcal B(t)
			+
			\kappa\|\sqrt{\rho}\phi_t(t)\|_{L^2}^2
			\right)
			+
			c\|\nabla\mu_t\|_{L^2}^2
			+
			c\|\sqrt{\rho}\mu_t\|_{L^2}^2
			\le
			\varepsilon\|\nabla u_t\|_{L^2}^2
			+
			M(\Phi(t)),
		\end{aligned}
	\end{equation}
	where \(c>0\) is independent of the approximation parameters.
	Define
	\[
	\mathcal Y(t):=
	\|\nabla\phi_t(t)\|_{L^2}^2
	-\mathcal B(t)
	+
	\kappa\|\sqrt{\rho}\phi_t(t)\|_{L^2}^2 .
	\]
	Then \eqref{joint_phase_energy} becomes
	\begin{equation}\label{Y_diff_ineq}
		\frac{\mathrm d}{\mathrm d t}\mathcal Y(t)
		+
		c\|\nabla\mu_t\|_{L^2}^2
		+
		c\|\sqrt{\rho}\mu_t\|_{L^2}^2
		\le
		\varepsilon\|\nabla u_t\|_{L^2}^2
		+
		M(\Phi(t)).
	\end{equation}
	We next establish the coercivity of \(\mathcal Y(t)\). Recalling the definition \eqref{def_phase_correction} of
	\(\mathcal B(t)\),
	by H\"older's inequality, Sobolev embedding, and Young's inequality, for any
	$\varepsilon>0$,
	\[
	\begin{aligned}
		\left|
		2\int \nabla\phi_t\cdot\rho u
		\left[\mu-(\phi^3-\phi)\right]\,\mathrm d x
		\right|
		&
		\le
		C\|\nabla\phi_t\|_{L^2}
		\|\rho\|_{L^\infty}
		\|u\|_{L^6}
		\|\mu-(\phi^3-\phi)\|_{L^3}
		\\
		&
		\le
		\varepsilon\|\nabla\phi_t\|_{L^2}^2
		+
		C_\varepsilon
		\|\rho\|_{L^\infty}^2
		\|u\|_{L^6}^2
		\|\mu-(\phi^3-\phi)\|_{L^3}^2 .
	\end{aligned}
	\]
	Since \(q>3\), \(W^{1,q}(\Omega)\hookrightarrow L^\infty(\Omega)\), and
	\(H_0^1(\Omega)\hookrightarrow L^6(\Omega)\). Moreover,
	\[
	\|\mu-(\phi^3-\phi)\|_{L^3}
	\le
	\|\mu\|_{L^3}
	+
	\|\phi^3-\phi\|_{L^3}
	\le
	C\|\mu\|_{H^1}
	+
	C(1+\|\phi\|_{L^\infty}^2)\|\phi\|_{L^3}.
	\]
	Using \(H^2(\Omega)\hookrightarrow L^\infty(\Omega)\) in three dimensions,
	we obtain
	\[
	\|\mu-(\phi^3-\phi)\|_{L^3}
	\le
	M\left(
	\|\mu\|_{H^1}
	+
	\|\phi\|_{H^2}
	\right).
	\]
	Therefore, by Lemmas \ref{Lemma_nabla_u}, \ref{Lemma_rho_W1q} and \ref{Lemma_phi_H2}, one has
	\[
	\begin{aligned}
		\left|
		2\int \nabla\phi_t\cdot\rho u
		\left[\mu-(\phi^3-\phi)\right]\,\mathrm d x
		\right|
		\le
		\varepsilon\|\nabla\phi_t\|_{L^2}^2
		+
		M\left(
		\mathcal M_0+\int_0^tM(\Phi(s))\,\mathrm d s
		\right).
	\end{aligned}
	\]
	Similarly, by H\"older's inequality, Sobolev embedding, and Lemma
	\ref{Lemma_F03}, we have
	\[
	\begin{aligned}
		\left|
		2\int \phi_t\rho u\cdot
		\left[
		\nabla\mu-(3\phi^2-1)\nabla\phi
		\right]\,\mathrm d x
		\right|
		\le
		C
		\|\phi_t\|_{L^6}
		\|\rho\|_{L^\infty}
		\|u\|_{L^3}
		\left\|
		\nabla\mu-(3\phi^2-1)\nabla\phi
		\right\|_{L^2}.
	\end{aligned}
	\]
	Since \(H^1(\Omega)\hookrightarrow L^6(\Omega)\), Lemma
	\ref{Lemma_F03} gives
	\[
	\|\phi_t\|_{L^6}
	\le
	C\|\phi_t\|_{H^1}
	\le
	C\left(
	\|\sqrt{\rho}\phi_t\|_{L^2}
	+
	\|\nabla\phi_t\|_{L^2}
	\right).
	\]
	Moreover,
	\[
	\|u\|_{L^3}
	\le
	C\|u\|_{H^1}
	\le
	M\left(
	\mathcal M_0+\int_0^tM(\Phi(s))\,\mathrm d s
	\right),
	\]
	and
	\[
	\begin{aligned}
		\left\|
		\nabla\mu-(3\phi^2-1)\nabla\phi
		\right\|_{L^2}
		&\le
		\|\nabla\mu\|_{L^2}
		+
		\|3\phi^2-1\|_{L^\infty}
		\|\nabla\phi\|_{L^2}  \\
		&\le
		M\left(
		\mathcal M_0+\int_0^tM(\Phi(s))\,\mathrm d s
		\right).
	\end{aligned}
	\]
	Therefore,
	\[
	\begin{aligned}
		\left|
		2\int \phi_t\rho u\cdot
		\left[
		\nabla\mu-(3\phi^2-1)\nabla\phi
		\right]\,\mathrm d x
		\right|
		\le
		\varepsilon
		\left(
		\|\nabla\phi_t\|_{L^2}^2
		+
		\|\sqrt{\rho}\phi_t\|_{L^2}^2
		\right)
		+
			M\left(
		\mathcal M_0+\int_0^tM(\Phi(s))\,\mathrm d s
		\right).
	\end{aligned}
	\]
	Besides, since
	$
	3\phi^2-1\ge -1,
	$
	we have
	\[
	\int \rho(3\phi^2-1)\phi_t^2\,\mathrm d x
	\ge
	-\|\sqrt{\rho}\phi_t\|_{L^2}^2 .
	\]
	Therefore,
	\[
	\begin{aligned}
		\mathcal Y(t)
		&=
		\|\nabla\phi_t(t)\|_{L^2}^2
		-\mathcal B(t)
		+
		\kappa\|\sqrt{\rho}\phi_t(t)\|_{L^2}^2
		\\
		&\ge
		(1-C\varepsilon)\|\nabla\phi_t(t)\|_{L^2}^2
		+
		(\kappa-1-C\varepsilon)
		\|\sqrt{\rho}\phi_t(t)\|_{L^2}^2
		-
		M\left(
		\mathcal M_0+\int_0^tM(\Phi(s))\,\mathrm d s
		\right).
	\end{aligned}
	\]
	Choosing \(\varepsilon>0\) smaller if necessary, we get
	\begin{equation}\label{Y_lower_bound}
		\mathcal Y(t)
		\ge
		c_0\left(
		\|\nabla\phi_t(t)\|_{L^2}^2
		+
		\|\sqrt{\rho}\phi_t(t)\|_{L^2}^2
		\right)
		-
		M\left(
		\mathcal M_0+\int_0^tM(\Phi(s))\,\mathrm d s
		\right).
	\end{equation}

	Integrating \eqref{Y_diff_ineq} over $(0,t)$ and using \eqref{Y_lower_bound}, we obtain
	\[
	\begin{aligned}
		&\|\nabla\phi_t(t)\|_{L^2}^2
		+
		\|\sqrt{\rho}\phi_t(t)\|_{L^2}^2
		+
		\int_0^t
		\left(
		\|\nabla\mu_t\|_{L^2}^2
		+
		\|\sqrt{\rho}\mu_t\|_{L^2}^2
		\right)\,\mathrm d s  \\
		\le&
		C\mathcal Y(0)
		+
		M\left(\mathcal M_0+\int_0^tM(\Phi(s))\,\mathrm d s\right).
	\end{aligned}
	\]
	The initial compatibility condition gives
	\[
	\Delta\mu_0=\rho_0h_0.
	\]
	Evaluating \eqref{perturbed_system}$_3$ at \(t=0\), we formally obtain
	\[
	\rho_0\phi_t(0)+\rho_0u_0\cdot\nabla\phi_0
	=
	\Delta\mu_0
	=
	\rho_0h_0.
	\]
	Hence,
	\[
	\rho_0
	\left(
	\phi_t(0)-h_0+u_0\cdot\nabla\phi_0
	\right)
	=0.
	\]
	We define
	\[
	\phi_{1,0}:=h_0-u_0\cdot\nabla\phi_0.
	\]
	Since \(\rho_0\ge\delta>0\),
	\(\phi_t(0)=\phi_{1,0}\) a.e.  This identity is used at fixed \(\delta\).
	Its estimate is uniform in \(\delta\).  Indeed,
	\(h_0\in H^1\), \(u_0\in H^2\), \(\phi_0\in H^3\), and
	\(H^2\hookrightarrow L^\infty\) imply $\phi_{1,0}\in H^1(\Omega)$
	and
	\[
	\|\phi_{1,0}\|_{H^1}
	\le
	C\left(
	\|h_0\|_{H^1}
	+
	\|u_0\|_{H^2}\|\phi_0\|_{H^3}
	\right).
	\]
	Therefore,
	\[
	\|\nabla\phi_t(0)\|_{L^2}^2
	+
	\|\sqrt{\rho_0}\phi_t(0)\|_{L^2}^2
	\le
	C\mathcal M_0.
	\]
	Moreover, by the definition of $\mathcal B$ and the initial regularity
	assumptions,
	\[
	|\mathcal B(0)|\le C\mathcal M_0.
	\]
	Therefore,
	\[
	|\mathcal Y(0)|\le C\mathcal M_0.
	\]
	Combining the above estimates, we conclude that
	\[
	\begin{aligned}
		\|\nabla\phi_t(t)\|_{L^2}^2
		+
		\|\sqrt{\rho}\phi_t(t)\|_{L^2}^2
		+
		\int_0^t
		\left(
		\|\nabla\mu_t\|_{L^2}^2
		+
		\|\sqrt{\rho}\mu_t\|_{L^2}^2
		\right)\,\mathrm d s
		\le
		M\left(
		\mathcal M_0+\int_0^tM(\Phi(s))\,\mathrm d s
		\right).
	\end{aligned}
	\]
	This completes the proof.
\end{proof}

\begin{lemma}\label{Lemma_mu_phi_elliptic_closed}
	Under the assumptions of Lemma \ref{Lemma_phase_high_joint}, there exists
	a nondecreasing function $M\in\mathcal F$ such that, for any $t\in(0,T)$,
	\begin{equation}\label{mu_phi_elliptic_closed}
		\|\mu(t)\|_{H^2}^2
		+
		\|\phi(t)\|_{H^3}^2
		\le
		M\left(
		\mathcal M_0+\int_0^tM(\Phi(s))\,\mathrm d s
		\right).
	\end{equation}
\end{lemma}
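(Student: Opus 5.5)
The proof is purely elliptic, and we treat $\mu$ first and then $\phi$. For $\mu$, we read \eqref{perturbed_system}$_3$ as the Neumann problem
\[
\Delta\mu=\rho(\phi_t+u\cdot\nabla\phi),\qquad \partial_n\mu=0 .
\]
The Neumann $H^2$ estimate then gives
\[
\|\mu\|_{H^2}\le C\bigl(\|\rho\phi_t\|_{L^2}+\|\rho u\cdot\nabla\phi\|_{L^2}+\|\mu\|_{L^2}\bigr).
\]
The first term is bounded by $\|\rho\|_{L^\infty}^{1/2}\|\sqrt\rho\phi_t\|_{L^2}$, which is controlled by Lemmas~\ref{Lemma_rho_W1q} and \ref{Lemma_phase_high_joint}. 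For the second term we use $\|\rho\|_{L^\infty}\|u\|_{L^6}\|\nabla\phi\|_{L^3}$. Here $\|\nabla u\|_{L^2}$ comes from Lemma~\ref{Lemma_nabla_u}, and $\|\nabla\phi\|_{L^3}\le C\|\phi\|_{H^2}$ comes from Lemma~\ref{Lemma_phi_H2}. The last term $\|\mu\|_{L^2}$ is bounded by Lemma~\ref{Lemma_mu_phi_t}. Every right-hand side already has the form $M(\mathcal M_0+\int_0^tM(\Phi)\,\mathrm ds)$, so this gives the $\mu$ part. In particular $\mu$ is bounded in $L^\infty$ and in $W^{1,6}$ with constants of the same type.

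For $\phi$, we use the chemical relation \eqref{perturbed_system}$_4$ as the Neumann problem
\[
-\Delta\phi=\rho\mu-\rho(\phi^3-\phi)-R,\qquad \partial_n\phi=0 .
\]
The $H^3$ Neumann estimate gives
\[
\|\phi\|_{H^3}\le C\bigl(\|\rho\mu\|_{H^1}+\|\rho(\phi^3-\phi)\|_{H^1}+\|R\|_{H^1}+\|\phi\|_{L^2}\bigr).
\]
The product estimate \eqref{W1q-H1-product} (valid since $q>3$) yields $\|\rho\mu\|_{H^1}\le C\|\rho\|_{W^{1,q}}\|\mu\|_{H^1}$ and
\[
\|\rho(\phi^3-\phi)\|_{H^1}\le C\|\rho\|_{W^{1,q}}\|\phi^3-\phi\|_{H^1}\le C\|\rho\|_{W^{1,q}}\bigl(1+\|\phi\|_{H^2}^2\bigr)\|\phi\|_{H^2}.
\]
Here we use $H^2\hookrightarrow L^\infty$ and $\nabla(\phi^3)=3\phi^2\nabla\phi$. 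The residual is bounded by \eqref{fixed_residual_H1}, since it is time-independent. The norms $\|\rho\|_{W^{1,q}}$, $\|\mu\|_{H^1}$ and $\|\phi\|_{H^2}$ are controlled by Lemmas~\ref{Lemma_rho_W1q}, \ref{Lemma_mu_phi_t} and \ref{Lemma_phi_H2}. Squaring the two elliptic bounds and enlarging $M$ gives \eqref{mu_phi_elliptic_closed}.

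We do not expect a real obstacle, but two points need checking. First, the Neumann solvability conditions must hold for the equations as written; this is automatic, because they are genuine identities satisfied by the solution, and only the $L^2$ lower-order terms are needed to fix the constant modes. Second, every quantity on the right must be bounded by $M(\mathcal M_0+\int_0^tM(\Phi)\,\mathrm ds)$ and not by $\Phi(t)$ itself, since only the former closes the argument in Remark~\ref{rem:apriori-closure}. In particular $\rho$ must be estimated through Lemma~\ref{Lemma_rho_W1q}. The step needing the most care is the weighted term $\|\rho\phi_t\|_{L^2}$. It must be bounded through $\sqrt\rho\,\phi_t$, using Lemma~\ref{Lemma_phase_high_joint}, because no lower bound on $\rho$ is available. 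Alternatively, $\|\phi_t\|_{H^1}$ can be obtained from Lemma~\ref{Lemma_F03} combined with the same lemma.
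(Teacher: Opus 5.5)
Your proposal is correct and matches the paper's proof essentially line by line. The paper also applies the Neumann $H^2$ estimate to $\mu$ via \eqref{perturbed_system}$_3$, with $\|\rho\phi_t\|_{L^2}\le\|\rho\|_{L^\infty}^{1/2}\|\sqrt\rho\,\phi_t\|_{L^2}$, and the Neumann $H^3$ estimate to $\phi$ via \eqref{perturbed_system}$_4$ and the $W^{1,q}\times H^1$ product estimate, closing with Lemmas~\ref{Lemma_nabla_u}, \ref{Lemma_rho_W1q}, \ref{Lemma_mu_phi_t}, \ref{Lemma_phi_H2}, \ref{Lemma_phase_high_joint} and \eqref{fixed_residual_H1}. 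Your bound $(1+\|\phi\|_{H^2}^2)\|\phi\|_{H^2}$ in place of the paper's $(1+\|\phi\|_{H^2}^2)\|\phi\|_{H^1}$ is only cosmetic and is equally covered by Lemma~\ref{Lemma_phi_H2}.
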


\begin{proof}
	By \eqref{perturbed_system}$_3$ and Neumann elliptic regularity,
	\[
	\begin{aligned}
		\|\mu\|_{H^2}
		&\le C\left(\|\rho\phi_t+\rho u\cdot\nabla\phi\|_{L^2}
		+\|\mu\|_{L^2}\right)\\
		&\le C\left(
		\|\rho\|_{L^\infty}^{1/2}\|\sqrt{\rho}\phi_t\|_{L^2}
		+\|\rho\|_{L^\infty}\|u\|_{H^1}\|\phi\|_{H^2}
		+\|\mu\|_{L^2}\right).
	\end{aligned}
	\]
	Similarly, using \eqref{perturbed_system}$_4$, the multiplication
	estimate $\|fg\|_{H^1}\le C\|f\|_{W^{1,q}}\|g\|_{H^1}$ for $q>3$,
	and $H^2\hookrightarrow L^\infty$, we have
	\[
	\begin{aligned}
		\|\phi\|_{H^3}
		&\le C\left(
		\|\rho\mu\|_{H^1}+\|\rho(\phi^3-\phi)\|_{H^1}
		+\|R\|_{H^1}+\|\phi\|_{L^2}\right)\\
		&\le C\|\rho\|_{W^{1,q}}
		\big(\|\mu\|_{H^1}
		+(1+\|\phi\|_{H^2}^2)\|\phi\|_{H^1}\big)
		+C\|R\|_{H^1}+C\|\phi\|_{L^2}.
	\end{aligned}
	\]
	All terms on the right are controlled by
	Lemmas~\ref{Lemma_nabla_u}, \ref{Lemma_rho_W1q},
	\ref{Lemma_mu_phi_t}, \ref{Lemma_phi_H2}, and
	\ref{Lemma_phase_high_joint}.
	Together with \eqref{fixed_residual_H1}, after enlarging $M$ if necessary, one obtains
	\[
	\|\mu(t)\|_{H^2}^2+\|\phi(t)\|_{H^3}^2
	\le M\left(
	\mathcal M_0+\int_0^t M(\Phi(s))\,\mathrm ds
	\right).
	\]
	The proof is complete.
\end{proof}

 \noindent
 \textbf{Proof of Proposition \ref{apriori-prop}.}
  Collecting all the estimates above, we obtain
 \[
 \Phi(t)+\int_0^t\mathcal D(s)\,\mathrm d s
 \le
 M\left(
 \mathcal M_0+\int_0^tM(\Phi(s))\,\mathrm d s
 \right),
 \]
 where $\mathcal D(t)$ is given by \eqref{def_D}. This proves \eqref{apriori-est} and completes the proof of Proposition \ref{apriori-prop}.

\section{Compactness, convergence, and well-posedness with vacuum}
\label{sec:vacuum-limit}

For each $\delta>0$, Theorem~\ref{positive-density-thm} and
local uniqueness yield a maximal positive-density strong solution
$(\rho^\delta,u^\delta,\phi^\delta,\mu^\delta)$ on
$[0,T_\delta^{\max})$.
We establish a common lifespan, pass to the limit as
$\delta\to0$, and prove uniqueness to complete the proof of
Theorem~\ref{main-thm}. Hereafter, $\mathcal M_0^\delta$ denotes the initial size of the $\delta$-approximation, while $\Phi^\delta$ and
$\mathcal D^\delta$ are the quantities $\Phi$ and $\mathcal D$ evaluated at $(\rho^\delta,u^\delta,\phi^\delta,\mu^\delta)$ with residual $R^\delta$; see \eqref{def_Phi}, \eqref{def_initial_M} and \eqref{def_D}.

\subsection{A common lifespan}

The initial-data estimates in
Section~\ref{sec:perturbed-problem} imply
\begin{equation}\label{uniform-initial-size}
	B_0:=\sup_{0<\delta<1}\mathcal M_0^\delta<\infty.
\end{equation}
Proposition~\ref{apriori-prop} and a standard continuity
argument provide $T_0\in(0,1]$, independent of $\delta$, such that
\[
\sup_{0\le s\le t}\Phi^\delta(s)
+\int_0^t\mathcal D^\delta(s)\,\mathrm ds
\le C(B_0),
\qquad
0\le t<\min\{T_0,T_\delta^{\max}\}.
\]
Since these estimates control
$\int_0^t\|\nabla u^\delta\|_{L^\infty}\,\mathrm ds$,
the characteristic formula gives
\begin{equation}\label{rho-delta-stays-positive}
	\inf_{\Omega\times[0,t]}\rho^\delta
	\ge \delta^{1/\gamma}e^{-C(B_0)}>0.
\end{equation}
For each fixed $\delta$, this lower bound and the preceding
estimates ensure the hypotheses of
Proposition~\ref{positive-density-continuation}.
Consequently, $T_\delta^{\max}>T_0$ for every $0<\delta<1$,
and the approximate solutions satisfy the uniform bound
\begin{equation}\label{uniform-Phi-D}
	\sup_{0<\delta<1}
	\left\{
	\sup_{0\le t\le T_0}\Phi^\delta(t)
	+\int_0^{T_0}\mathcal D^\delta(s)\,\mathrm ds
	\right\}
	\le C(B_0).
\end{equation}
This bound allows us to pass to the vacuum limit
on the common interval $[0,T_0]$.
\subsection{Compactness}
By \eqref{uniform-Phi-D}, the associated elliptic estimates,
and the Aubin--Lions--Simon compactness theorem \cite{Simon87},
there exists a subsequence, still indexed by $\delta$, such that
\begin{alignat*}{2}
	\rho^\delta &\rightharpoonup^* \rho
	\quad\text{in }L^\infty(0,T_0;W^{1,q}),
	&\qquad
	\rho_t^\delta &\rightharpoonup^* \rho_t
	\quad\text{in }L^\infty(0,T_0;L^2),
	\\
	u^\delta &\rightharpoonup^* u
	\quad\text{in }L^\infty(0,T_0;H^2\cap H_0^1),
	&
	u_t^\delta &\rightharpoonup u_t
	\quad\text{in }L^2(0,T_0;H_0^1),
	\\
	\phi^\delta &\rightharpoonup^* \phi
	\quad\text{in }L^\infty(0,T_0;H^3),
	&
	\phi_t^\delta &\rightharpoonup^* \phi_t
	\quad\text{in }L^\infty(0,T_0;H^1),
	\\
	\mu^\delta &\rightharpoonup^* \mu
	\quad\text{in }L^\infty(0,T_0;H^2),
	&
	\mu_t^\delta &\rightharpoonup \mu_t
	\quad\text{in }L^2(0,T_0;H^1).
\end{alignat*}
Furthermore, the Lam\'e elliptic estimate gives
\[
\|u^\delta(t)\|_{W^{2,q}}
\le M(\Phi^\delta(t))
\bigl(1+\|\nabla u_t^\delta(t)\|_{L^2}\bigr).
\]
Together with \eqref{uniform-Phi-D}, this yields
\[
\sup_{0<\delta<1}
\int_0^{T_0}\|u^\delta(t)\|_{W^{2,q}}^2\,\mathrm dt
\le C(B_0).
\]
Hence, after further extraction if necessary,
\[
u^\delta\rightharpoonup u
\quad\text{in }L^2(0,T_0;W^{2,q}).
\]

Moreover, for every $0<\alpha<1-3/q$,
\begin{alignat}{2}
	\rho^\delta &\to\rho
	\quad\text{in }C([0,T_0];C^\alpha(\overline\Omega)),
	&\qquad
	u^\delta &\to u
	\quad\text{in }C([0,T_0];H^1),
	\label{vacuum-strong-rho-u}\\
	\phi^\delta &\to\phi
	\quad\text{in }C([0,T_0];H^2),
	&
	\mu^\delta &\to\mu
	\quad\text{in }C([0,T_0];H^1).
	\label{vacuum-strong-phi-mu}
\end{alignat}
The uniform convergence of $\sqrt{\rho^\delta}$ identifies
the weighted weak-star limits as
\begin{align}
	\sqrt{\rho^\delta}\,u_t^\delta
	&\rightharpoonup^*\sqrt\rho\,u_t
	&&\text{in }L^\infty(0,T_0;L^2),
	\label{vacuum-weighted-ut-limit}\\
	\sqrt{\rho^\delta}\,\phi_t^\delta
	&\rightharpoonup^*\sqrt\rho\,\phi_t
	&&\text{in }L^\infty(0,T_0;L^2).
	\label{vacuum-weighted-phit-limit}
\end{align}
Finally, the strong convergence of $\phi^\delta$ and
Sobolev multiplication imply
\begin{align}
	\Psi'(\phi^\delta)&\to\Psi'(\phi)
	&&\text{in }C([0,T_0];H^1),
	\label{vacuum-strong-nonlinearity}\\
	(\Delta\phi^\delta)\nabla\phi^\delta
	&\to(\Delta\phi)\nabla\phi
	&&\text{in }C([0,T_0];L^{3/2}).
	\label{vacuum-strong-capillary}
\end{align}
By the uniform bounds and the convergences established above,
we pass to the limit in the approximate equations in the sense
of distributions. Since $R^\delta\to0$ in $H^1(\Omega)$,
the limit $(\rho,u,\phi,\mu)$ satisfies \eqref{NSCH}.
By the established regularity, these equations hold almost
everywhere in $\Omega\times(0,T_0)$.

The strong time convergences, together with the convergence
of the approximate initial data, yield the weighted initial
conditions \eqref{NSCH_I_C}.
The boundary conditions are preserved under the weak Sobolev
convergences, and $\rho\ge0$ follows from the uniform
convergence of $\rho^\delta$.
Finally, combining \eqref{uniform-Phi-D}, weak lower semicontinuity,
and the weighted convergences
\eqref{vacuum-weighted-ut-limit}-\eqref{vacuum-weighted-phit-limit}
yields the regularity and estimates stated in
Theorem~\ref{main-thm}.
This completes the existence proof of Theorem~\ref{main-thm}.

\subsection{Uniqueness}
\label{sec:uniqueness}
We prove uniqueness in the class of strong solutions from
Theorem~\ref{main-thm}. The same argument also applies to two
positive-density solutions with the same time-independent residual $R$:
this residual cancels when the chemical relations are subtracted.
Let $(\rho_i,u_i,\phi_i,\mu_i)$, $i=1,2$, be two strong solutions of the system
\eqref{NSCH}--\eqref{NSCH_B_C} on $[0,T]$ with the same initial data. Set
\[
\rho:=\rho_1-\rho_2,\qquad
u:=u_1-u_2,\qquad
\phi:=\phi_1-\phi_2,\qquad
\mu:=\mu_1-\mu_2.
\]
The differences of these solutions satisfy
\begin{equation}\label{diff_NSCH}
	\left\{
	\begin{aligned}
		&\rho_t+\operatorname{div}(\rho u_1)+\operatorname{div}(\rho_2u)=0,
		\\
		&\rho_1\bigl(u_t+u_1\cdot\nabla u\bigr)
		-\mathcal L u
		+\nabla(\rho_1^\gamma-\rho_2^\gamma)
		\\
		&\qquad
		=
		-\rho\bigl(u_{2t}+u_2\cdot\nabla u_2\bigr)
		-\rho_1u\cdot\nabla u_2
		-(\Delta\phi)\nabla\phi_1
		-(\Delta\phi_2)\nabla\phi,
		\\
		&\rho_1\bigl(\phi_t+u_1\cdot\nabla\phi\bigr)
		-\Delta\mu
		=
		-\rho\bigl(\phi_{2t}+u_2\cdot\nabla\phi_2\bigr)
		-\rho_1u\cdot\nabla\phi_2,
		\\
		&\rho_1\mu
		=
		-\Delta\phi
		+\rho_1\bigl[\Psi'(\phi_1)-\Psi'(\phi_2)\bigr]
		+\rho\bigl(\Psi'(\phi_2)-\mu_2\bigr),
	\end{aligned}
	\right.
\end{equation}
with the boundary conditions
\[
u=0,\qquad \partial_n\phi=\partial_n\mu=0,
\qquad \text{on } \partial\Omega\times(0,T),
\]
and the initial conditions
\[
\rho(0)=0,\qquad \rho_0u(0)=0,\qquad \rho_0\phi(0)=0,
\qquad \text{in }\Omega.
\]

To begin with, we define
\begin{equation}\label{uniqueness_energy}
\mathcal H(t):=
\|\rho(t)\|_{L^2}^2
+\|\sqrt{\rho_1}u(t)\|_{L^2}^2
+\|\sqrt{\rho_1}\phi(t)\|_{L^2}^2
+\|\nabla\phi(t)\|_{L^2}^2
\end{equation}
and
\begin{equation}\label{uniqueness_coefficient}
\mathcal G(t):=
1+\|\nabla u_1(t)\|_{L^\infty}
+\|\nabla u_2(t)\|_{L^\infty}
+\|\mu_{2t}(t)\|_{H^1}^2
+\|u_{2t}(t)\|_{H^1}^2.
\end{equation}
The regularity in Theorem~\ref{main-thm} implies
$\mathcal G\in L^1(0,T)$. We will prove that
\begin{equation}\label{uniqueness_target}
\mathcal H(t)+c\int_0^t
\bigl(\|\nabla u(s)\|_{L^2}^2
+\|\nabla\mu(s)\|_{L^2}^2\bigr)\,\mathrm ds
\le C\int_0^t\mathcal G(s)\mathcal H(s)\,\mathrm ds
\end{equation}
for some $c>0$. Gronwall's inequality will then give
$\mathcal H=0$, and the dissipation terms will determine
the velocity and chemical potential also in the vacuum region.

Throughout this proof, $C$ may depend on the uniform bounds of the two
solutions in the regularity class of Theorem~\ref{main-thm}, the domain,
and the positive total mass, but is independent of time. In particular,
Sobolev embedding gives
\[
 u_i,\ \mu_i,\ \phi_i,\ \nabla\phi_i\in L^\infty(0,T;L^\infty),
 \qquad \phi_{it}\in L^\infty(0,T;L^6),\qquad i=1,2.
\]
The constants in the weighted Poincar\'e inequalities below depend on
the positive conserved total mass and the uniform density bound,
but not on a positive lower bound for the density.

\medskip
\noindent
\textbf{Step 1. Density difference.}
Multiplying \eqref{diff_NSCH}$_1$ by $\rho$ and integrating by parts over
$\Omega$, we obtain
\[
\frac12\frac{\mathrm d}{\mathrm d t}\|\rho\|_{L^2}^2
+
\frac12\int \rho^2\operatorname{div}u_1\,\mathrm d x
+
\int \rho\,\operatorname{div}(\rho_2u)\,\mathrm d x
=0.
\]
Since $\rho_i \in L^\infty(0,T;W^{1,q}\cap H^1)$, it follows from H\"older's and Young's inequalities that, for any $\varepsilon>0$,
\begin{equation}\label{density_final}
\begin{aligned}
	\frac{\mathrm d}{\mathrm d t}\|\rho\|_{L^2}^2
	&\le
	C\|\nabla u_1\|_{L^\infty}\|\rho\|_{L^2}^2
	+
	2\left|
	\int \rho\,\operatorname{div}(\rho_2u)\,\mathrm d x
	\right| \\
	&\le
	C\|\nabla u_1\|_{L^\infty}\|\rho\|_{L^2}^2
	+
	C\|\rho\|_{L^2}
	\|u\|_{L^{\frac{2q}{q-2}}}
	\|\nabla\rho_2\|_{L^q}
	+C\|\rho_2\|_{L^\infty}\|\rho\|_{L^2}\|\nabla u\|_{L^2}
	\\
	&\le
	\varepsilon\|\nabla u\|_{L^2}^2
	+
	C_\varepsilon
	(
	\|\nabla u_1\|_{L^\infty}
	+1
	)
	\|\rho\|_{L^2}^2 ,
\end{aligned}
\end{equation}
where we used $q>3$ and therefore
$H_0^1(\Omega)\hookrightarrow L^{\frac{2q}{q-2}}(\Omega)$.

\medskip
\noindent
\textbf{Step 2. Phase difference.}
Testing \eqref{diff_NSCH}$_4$ by $\mu$ and using
$\partial_n\phi=0$, we have
\begin{equation}\label{weighted_mu_control}
	\begin{aligned}
		\|\sqrt{\rho_1}\mu\|_{L^2}^2
		&=
		\int \nabla\phi\cdot\nabla\mu\,\mathrm d x
		+
		\int
		\rho_1\bigl[\Psi'(\phi_1)-\Psi'(\phi_2)\bigr]\mu\,\mathrm d x
		+
		\int
		\rho\bigl(\Psi'(\phi_2)-\mu_2\bigr)\mu\,\mathrm d x
		\\
		&\le
		\|\nabla\phi\|_{L^2}\|\nabla\mu\|_{L^2}
		+
		C\|\sqrt{\rho_1}\phi\|_{L^2}
		\|\sqrt{\rho_1}\mu\|_{L^2}
		+
		C\|\rho\|_{L^2}
		\bigl(
		\|\sqrt{\rho_1}\mu\|_{L^2}
		+
		\|\nabla\mu\|_{L^2}
		\bigr)
		\\
		&\le
		\frac12\|\sqrt{\rho_1}\mu\|_{L^2}^2
		+
		C\Bigl(
		\|\nabla\phi\|_{L^2}^2
		+
		\|\sqrt{\rho_1}\phi\|_{L^2}^2
		+
		\|\nabla\mu\|_{L^2}^2
		+
		\|\rho\|_{L^2}^2
		\Bigr).
	\end{aligned}
\end{equation}
Consequently, after absorbing the first term on the right-hand side of
\eqref{weighted_mu_control}, it follows from Lemma \ref{Lemma_F03} that
\[
	\|\mu\|_{L^2}^2
    +
    \|\mu\|_{L^3}^2
	\le
	C\Bigl(
	\|\nabla\mu\|_{L^2}^2
	+
	\|\nabla\phi\|_{L^2}^2
	+
	\|\sqrt{\rho_1}\phi\|_{L^2}^2
	+
	\|\rho\|_{L^2}^2
	\Bigr).
\]
Therefore,
\begin{equation}\label{mu_control}
	\begin{aligned}
		\|\sqrt{\rho_1}\mu\|_{L^2}^2
		+
		\|\mu\|_{L^2}^2
		+
		\|\mu\|_{L^3}^2
		\le
		C\Bigl(
		\|\nabla\mu\|_{L^2}^2
		+
		\|\nabla\phi\|_{L^2}^2
		+
		\|\sqrt{\rho_1}\phi\|_{L^2}^2
		+
		\|\rho\|_{L^2}^2
		\Bigr).
	\end{aligned}
\end{equation}

Testing \eqref{diff_NSCH}$_3$ by $\mu$ and
\eqref{diff_NSCH}$_4$ by $\phi_t$, we combine the resulting
identities and add the time derivative of
$\frac12\|\sqrt{\rho_1}\phi\|_{L^2}^2$ to obtain
\begin{equation}\label{phase_basic_identity}
	\begin{aligned}
		&\frac12\frac{\mathrm d}{\mathrm d t}
		\Bigl(
		\|\sqrt{\rho_1}\phi\|_{L^2}^2
		+
		\|\nabla\phi\|_{L^2}^2
		\Bigr)
		+
		\|\nabla\mu\|_{L^2}^2
		\\
		=&
		\frac12\int \rho_{1t}\phi^2\,\mathrm d x
		-\int
		\rho_1
		\Bigl(
		\Psi'(\phi_1)-\Psi'(\phi_2)-\phi
		\Bigr)\phi_t\,\mathrm d x
		-\int \rho_1u_1\cdot\nabla\phi\,\mu\,\mathrm d x\\
		&-\int \rho_1u\cdot\nabla\phi_2\,\mu\,\mathrm d x
		-\int \rho\phi_{2t}\mu\,\mathrm d x
		-\int \rho u_2\cdot\nabla\phi_2\,\mu\,\mathrm d x
		+
		\int
		\rho\bigl(\mu_2-\Psi'(\phi_2)\bigr)\phi_t\,\mathrm d x \\
		:=& \sum_{i=1}^7 K_i.
	\end{aligned}
\end{equation}
We estimate the terms in \eqref{phase_basic_identity} separately.
As for $K_1$, since $\phi_i\in L^\infty(0,T;H^3)$ and $\rho_{it} \in L^\infty(0,T;L^2)$, $i=1,2$, we have
\begin{equation}\label{k1_estimate}
	K_1
	\le
	C\|\rho_{1t}\|_{L^2}\|\phi\|_{L^4}^2
	\le
	C\Bigl(
	\|\sqrt{\rho_1}\phi\|_{L^2}^2
	+
	\|\nabla\phi\|_{L^2}^2
	\Bigr)
	\le
	C \mathcal H(t) .
\end{equation}

As for $K_2$, we first notice that, by the mean value theorem,
\[
\Psi'(\phi_1)-\Psi'(\phi_2)
=
\Psi''(\theta\phi_1+(1-\theta)\phi_2)(\phi_1-\phi_2)
\]
for some $\theta=\theta(x,t)\in(0,1)$. Then,  the bounds
$\phi_i\in L^\infty(0,T;H^3)$, $i=1,2$, together with
Sobolev embedding and the smoothness of $\Psi$, imply
\begin{equation}\label{psi_hidden_estimates_1}
	\left|\Psi'(\phi_1)-\Psi'(\phi_2)-\phi\right|
	\le C|\phi|
\end{equation}
and
\begin{equation}\label{psi_hidden_estimates_2}
	\left\|\nabla\bigl(\Psi'(\phi_1)-\Psi'(\phi_2)-\phi\bigr)
	\right\|_{L^2}
	\le C\|\phi\|_{H^1}.
\end{equation}
Using \eqref{diff_NSCH}$_3$ and integrating by parts, we can rewrite $K_2$ as
\[
\begin{aligned}
	K_2
	={}&\int
	\nabla\bigl(\Psi'(\phi_1)-\Psi'(\phi_2)-\phi\bigr)
	\cdot\nabla\mu\,\mathrm dx\\
	&+\int
	\bigl(\Psi'(\phi_1)-\Psi'(\phi_2)-\phi\bigr)
	\Bigl(
	\rho_1u_1\cdot\nabla\phi
	+\rho(\phi_{2t}+u_2\cdot\nabla\phi_2)
	+\rho_1u\cdot\nabla\phi_2
	\Bigr)\,\mathrm dx.
\end{aligned}
\]
Applying  Lemma~\ref{Lemma_F03}, H\"older's inequality and Young's inequality, we obtain
\begin{equation}\label{potential_phi_t_estimate}
	\begin{aligned}
		K_2
		&\le
		\left\|
		\nabla\Bigl(
		\Psi'(\phi_1)-\Psi'(\phi_2)-\phi
		\Bigr)
		\right\|_{L^2}
		\|\nabla\mu\|_{L^2}
		+
		C\|\rho_1\|_{L^\infty}\|u_1\|_{L^\infty}
		\|\phi\|_{L^2}\|\nabla\phi\|_{L^2}
		\\
		&\quad
		+
		C\|\rho\|_{L^2}
		\|\phi\|_{L^3}
		\bigl(
		\|\phi_{2t}\|_{L^6}
		+\|u_2\|_{L^\infty}\|\nabla\phi_2\|_{L^6}
		\bigr)
		+
		C\|\nabla\phi_2\|_{L^\infty}
		\|\sqrt{\rho_1}u\|_{L^2}
		\|\sqrt{\rho_1}\phi\|_{L^2}
		\\
		&\le
		C\bigl(
		\|\nabla\phi\|_{L^2}
		+\|\phi\|_{L^2}
		\bigr)
		\|\nabla\mu\|_{L^2}
		+
		C
		\bigl(
		\|\phi\|_{H^1}^2
		+\|\rho\|_{L^2}\|\phi\|_{H^1}
		+\|\sqrt{\rho_1}u\|_{L^2}\|\sqrt{\rho_1}\phi\|_{L^2}
		\bigr)
		\\
		&\le
		\varepsilon\|\nabla\mu\|_{L^2}^2
		+
		C_\varepsilon
		\mathcal H(t),
	\end{aligned}
\end{equation}

Next, it follows from \eqref{mu_control} that
\begin{equation}\label{mu_terms_estimate}
	\begin{aligned}
		\sum_{i=3}^6 K_i
		&\le
		C\|\nabla\phi\|_{L^2}\|\mu\|_{L^2}
		+
		C\|\sqrt{\rho_1}u\|_{L^2}\|\sqrt{\rho_1}\mu\|_{L^2}
		\\
		&\quad
		+
		C\|\rho\|_{L^2}\|\phi_{2t}\|_{L^6}\|\mu\|_{L^3}
		+
		C\|\rho\|_{L^2}\|u_2\|_{L^\infty}
		\|\nabla\phi_2\|_{L^6}\|\mu\|_{L^3}
		\\
		&\le
		\varepsilon\|\nabla\mu\|_{L^2}^2
		+
		C_\varepsilon
		\mathcal H(t).
	\end{aligned}
\end{equation}

For the last term in \eqref{phase_basic_identity}, set
$\Theta_2:=\mu_2-\Psi'(\phi_2)$. The regularity of the two solutions
and Sobolev embedding imply
\[
 \|\Theta_2\|_{L^\infty}+\|\nabla\Theta_2\|_{L^3}\le C,
 \qquad
 \|\Theta_{2t}\|_{L^3}
 \le C\bigl(1+\|\mu_{2t}\|_{H^1}\bigr).
\]
Consequently, H\"older's inequality and $u\in H_0^1$ give
\begin{align*}
 \left|\int_\Omega\rho\Theta_{2t}\phi\,\mathrm dx\right|
 &\le C\bigl(1+\|\mu_{2t}\|_{H^1}\bigr)
       \|\rho\|_{L^2}\|\phi\|_{H^1},\\
 \left|\int_\Omega(\rho u_1+\rho_2u)\cdot
       \nabla(\Theta_2\phi)\,\mathrm dx\right|
 &\le C\bigl(\|\rho\|_{L^2}+\|\nabla u\|_{L^2}\bigr)
       \|\phi\|_{H^1}.
\end{align*}
Using the density difference equation \eqref{diff_NSCH}$_1$,
Young's inequality, and the weighted Poincar\'e inequality, we obtain
\begin{equation}\label{cross_phi_t}
	\begin{aligned}
		K_7
		&=
		\frac{\mathrm d}{\mathrm d t}
		\int
		\rho\Theta_2\phi\,\mathrm d x
		-\int
		\rho\Theta_{2t}\phi\,\mathrm d x
		-\int
		(\rho u_1+\rho_2u)\cdot
		\nabla\Bigl[
		\Theta_2\phi
		\Bigr]\,\mathrm d x
		\\
		&\le
		\frac{\mathrm d}{\mathrm d t}
		\int
		\rho\Theta_2\phi\,\mathrm d x
		+
		\varepsilon\|\nabla u\|_{L^2}^2
		+
		C_\varepsilon\mathcal G(t)
		\mathcal H(t).
	\end{aligned}
\end{equation}

Combining \eqref{k1_estimate}-\eqref{cross_phi_t} and taking
$\varepsilon>0$ sufficiently small, we arrive at
\begin{equation}\label{phase_final_differential}
	\begin{aligned}
		&\frac{\mathrm d}{\mathrm d t}
		\Biggl[
		\|\sqrt{\rho_1}\phi\|_{L^2}^2
		+
		\|\nabla\phi\|_{L^2}^2
		-
		2\int
		\rho\bigl(\mu_2-\Psi'(\phi_2)\bigr)\phi\,\mathrm d x
		\Biggr]
		+
		\|\nabla\mu\|_{L^2}^2
		\\
		&\le
		\varepsilon\|\nabla u\|_{L^2}^2
		+
		C_\varepsilon\mathcal G(t)
		\mathcal H(t).
	\end{aligned}
\end{equation}

We first verify that the modified phase energy vanishes at
$t=0$. The common initial data give $\rho(0)=0$ and
$\rho_0\phi(0)=0$.
By the time-continuity in Theorem~\ref{main-thm}, the difference
of the chemical relations holds at $t=0$ in $L^2(\Omega)$:
\[
\rho_0\mu(0)
=-\Delta\phi(0)
+\rho_0\bigl[\Psi'(\phi_1(0))-\Psi'(\phi_2(0))\bigr].
\]
Testing it by $\phi(0)$ and using the homogeneous Neumann
condition and $\rho_0\phi(0)=0$, we obtain
$\|\nabla\phi(0)\|_{L^2}^2=0$.
Since $\Omega$ is connected and $\int\rho_0\,\mathrm dx>0$,
it follows that $\phi(0)=0$. Consequently,
\begin{equation}\label{phase_initial_zero}
	\Biggl[
	\|\sqrt{\rho_1}\phi\|_{L^2}^2
	+\|\nabla\phi\|_{L^2}^2
	-2\int\rho\bigl(\mu_2-\Psi'(\phi_2)\bigr)\phi\,\mathrm dx
	\Biggr]_{t=0}=0.
\end{equation}

By Sobolev embedding and the weighted Poincar\'e inequality,
\[
	\left|2\int
	\rho\bigl(\mu_2-\Psi'(\phi_2)\bigr)\phi\,\mathrm dx\right|
	\le \frac12\Bigl(
	\|\sqrt{\rho_1}\phi\|_{L^2}^2
	+\|\nabla\phi\|_{L^2}^2
	\Bigr)+C\|\rho\|_{L^2}^2.
\]
Integrating \eqref{phase_final_differential} over $(0,t)$,
using \eqref{phase_initial_zero}, and absorbing the cross
term, we obtain, after adjusting $\varepsilon$ and
$C_\varepsilon$,
\begin{equation}\label{phase_final_integral}
	\begin{aligned}
		&\|\sqrt{\rho_1}\phi(t)\|_{L^2}^2
		+\|\nabla\phi(t)\|_{L^2}^2
		+\int_0^t\|\nabla\mu(s)\|_{L^2}^2\,\mathrm ds\\
		\le& C\|\rho(t)\|_{L^2}^2
		+\varepsilon\int_0^t\|\nabla u(s)\|_{L^2}^2\,\mathrm ds
		+C_\varepsilon\int_0^t
		\mathcal G(s)
		\mathcal H(s)\,\mathrm ds.
	\end{aligned}
\end{equation}
\medskip
\noindent
\textbf{Step 3. Velocity difference.}
Taking the inner product of \eqref{diff_NSCH}$_2$ with $u$ and integrating
over $\Omega$, we first note that
\[
\begin{aligned}
	\int \rho_1(u_t+u_1\cdot\nabla u)\cdot u\,\mathrm d x
	=
	\frac12\frac{\mathrm d}{\mathrm d t}\|\sqrt{\rho_1}u\|_{L^2}^2.
\end{aligned}
\]
Moreover, by the coercivity of $\mathcal L$ and $u|_{\partial\Omega}=0$,
\[
-\int \mathcal L u\cdot u\,\mathrm d x
\ge c_0\|\nabla u\|_{L^2}^2.
\]
Hence
\begin{equation}\label{vel_basic}
	\begin{aligned}
		\frac12\frac{\mathrm d}{\mathrm d t}\|\sqrt{\rho_1}u\|_{L^2}^2
		+c_0\|\nabla u\|_{L^2}^2
		\le&
		\left|\int (\rho_1^\gamma-\rho_2^\gamma)\operatorname{div}u\,\mathrm d x\right|
		+
		\left|\int \rho(u_{2t}+u_2\cdot\nabla u_2)\cdot u\,\mathrm d x\right|
		\\
		&
		+
		\left|\int \rho_1(u\cdot\nabla u_2)\cdot u\,\mathrm d x\right|
		+
		\left|\int (\Delta\phi)\nabla\phi_1\cdot u\,\mathrm d x\right|
		\\
		&+
		\left|\int (\Delta\phi_2)\nabla\phi\cdot u\,\mathrm d x\right|.
	\end{aligned}
\end{equation}

Since $\rho_i\in L^\infty(0,T;L^\infty)$ and $s\mapsto s^\gamma$ is
Lipschitz continuous on bounded intervals, we have
\[
|\rho_1^\gamma-\rho_2^\gamma|\le C|\rho|.
\]
Therefore, for any $\varepsilon>0$,
\begin{equation}\label{pressure_est}
		\left|
		\int
		(\rho_1^\gamma-\rho_2^\gamma)\operatorname{div}u\,\mathrm d x
		\right|
		\le
		C\|\rho\|_{L^2}\|\nabla u\|_{L^2}
		\le
		\varepsilon\|\nabla u\|_{L^2}^2
		+
		C_\varepsilon\|\rho\|_{L^2}^2 .
\end{equation}
Using $H_0^1(\Omega)\hookrightarrow L^6(\Omega)$ and
$H^1(\Omega)\hookrightarrow L^3(\Omega)$, we get
\begin{equation}\label{u2_terms_est}
	\begin{aligned}
		\left|
		\int
		\rho(u_{2t}+u_2\cdot\nabla u_2)\cdot u\,\mathrm d x
		\right|
		&\le
		\|\rho\|_{L^2}
		\bigl(
		\|u_{2t}\|_{L^3}
		+
		\|u_2\|_{L^\infty}\|\nabla u_2\|_{L^3}
		\bigr)
		\|u\|_{L^6}
		\\
		&\le
		C
		\|\rho\|_{L^2}
		\bigl(
		\|u_{2t}\|_{H^1}
		+
		\|u_2\|_{L^\infty}\|\nabla u_2\|_{L^3}
		\bigr)
		\|\nabla u\|_{L^2}
		\\
		&\le
		\varepsilon\|\nabla u\|_{L^2}^2
		+
		C_\varepsilon
		\bigl(
		\|u_{2t}\|_{H^1}^2
		+
		1
		\bigr)
		\|\rho\|_{L^2}^2 .
	\end{aligned}
\end{equation}
Moreover,
\begin{equation}\label{u_grad_u2_est}
	\begin{aligned}
		\left|
		\int
		\rho_1(u\cdot\nabla u_2)\cdot u\,\mathrm d x
		\right|
		&\le
		\|\nabla u_2\|_{L^\infty}
		\|\sqrt{\rho_1}u\|_{L^2}^2 .
	\end{aligned}
\end{equation}
For the capillary terms, by integration by parts and $u=0$ on
$\partial\Omega$, we have
\begin{equation}\label{capillary_1_est}
	\begin{aligned}
		\left|
		\int
		(\Delta\phi)\nabla\phi_1\cdot u\,\mathrm d x
		\right|
		&=
		\left|
		-\int
		\nabla\phi\cdot\nabla(\nabla\phi_1\cdot u)\,\mathrm d x
		\right|
		\\
		&\le
		C\|\nabla\phi\|_{L^2}
		\bigl(
		\|\nabla^2\phi_1\|_{L^3}\|u\|_{L^6}
		+
		\|\nabla\phi_1\|_{L^\infty}\|\nabla u\|_{L^2}
		\bigr)
		\\
		&\le
		\varepsilon\|\nabla u\|_{L^2}^2
		+
		C_\varepsilon
		\|\nabla\phi\|_{L^2}^2 .
	\end{aligned}
\end{equation}
Similarly,
\begin{equation}\label{capillary_2_est}
	\begin{aligned}
		\left|
		\int
		(\Delta\phi_2)\nabla\phi\cdot u\,\mathrm d x
		\right|
		\le
		\|\Delta\phi_2\|_{L^3}
		\|\nabla\phi\|_{L^2}
		\|u\|_{L^6}
		\le
		\varepsilon\|\nabla u\|_{L^2}^2
		+
		C_\varepsilon
		\|\nabla\phi\|_{L^2}^2 .
	\end{aligned}
\end{equation}
Hence,
we infer from \eqref{vel_basic}--\eqref{capillary_2_est} that
\begin{equation}\label{velocity_final}
\frac{\mathrm d}{\mathrm dt}\|\sqrt{\rho_1}u\|_{L^2}^2
+c\|\nabla u\|_{L^2}^2
\le C\mathcal G(t)\mathcal H(t).
\end{equation}

\medskip
\noindent
\textbf{Step 4. Gronwall argument.}
Integrating \eqref{density_final} and \eqref{velocity_final}
and using the common initial density and momentum, we obtain
\begin{equation}\label{rho_final_integral}
\|\rho(t)\|_{L^2}^2
\le\varepsilon\int_0^t\|\nabla u(s)\|_{L^2}^2\,\mathrm ds
+C_\varepsilon\int_0^t\mathcal G(s)\mathcal H(s)\,\mathrm ds
\end{equation}
and
\begin{equation}\label{velocity_final_integral}
\|\sqrt{\rho_1}u(t)\|_{L^2}^2
+c\int_0^t\|\nabla u(s)\|_{L^2}^2\,\mathrm ds
\le C\int_0^t\mathcal G(s)\mathcal H(s)\,\mathrm ds.
\end{equation}
Substituting \eqref{rho_final_integral} into
\eqref{phase_final_integral}, adding the density and velocity
estimates, and choosing $\varepsilon$ sufficiently small,
we absorb the terms involving
$\int_0^t\|\nabla u\|_{L^2}^2\,\mathrm ds$ and obtain
\begin{equation}\label{gronwall_pre}
\mathcal H(t)+c\int_0^t
\bigl(\|\nabla u(s)\|_{L^2}^2
+\|\nabla\mu(s)\|_{L^2}^2\bigr)\,\mathrm ds
\le C\int_0^t\mathcal G(s)\mathcal H(s)\,\mathrm ds.
\end{equation}
This is \eqref{uniqueness_target}. Since $\mathcal G\in L^1(0,T)$,
Gronwall's inequality gives $\mathcal H(t)=0$ for $0\le t\le T$.
Thus $\rho=0$, and the weighted Poincar\'e inequality gives
$\phi=0$. By \eqref{gronwall_pre}, both dissipation integrals
vanish. The homogeneous Dirichlet condition then implies $u=0$,
and \eqref{mu_control} yields $\mu=0$. Therefore,
\[
(\rho_1,u_1,\phi_1,\mu_1)=(\rho_2,u_2,\phi_2,\mu_2)
\quad\text{on }\Omega\times(0,T),
\]
which proves uniqueness and completes the proof of
Theorem~\ref{main-thm}.

\section*{Statements and Declarations}

%\subsection*{Funding}
%Yinghua Li's work is supported by the National Natural Science Foundation of China (Grant No. 12371205) and the Natural Science Foundation of Guangdong Province (Grant Nos. 2025A1515012026 and 2025A1515040001).

\subsection*{Competing interests}
On behalf of all authors, the corresponding author states that there is no conflict of interest.

\subsection*{Data availability}
Data sharing is not applicable to this article as no datasets were generated or analysed during the current study.

\subsection*{Use of generative AI and AI-assisted technologies}
During the preparation of this work, the authors used ChatGPT 6.1 in order to improve language and readability, and to assist with the formatting of mathematical expressions. After using this tool, the authors reviewed and edited the content as needed and take full responsibility for the content of the publication. All mathematical proofs and arguments were independently verified by the authors.

\subsection*{Acknowledgments} 
Yinghua Li's work is supported by the National Natural Science Foundation of China (Grant No. 12371205) and the Natural Science Foundation of Guangdong Province (Grant Nos. 2025A1515012026 and 2025A1515040001).
Jingxue Yin's work is supported by the National Natural Science Foundation of China (Grant No. 12171166), Jiangxi Provincial Natural Science Foundation (Grant No. 20243BCE51015), and the Natural Science Foundation of Guangdong Province (Grant No. 2025A1515012026).
\vskip2mm
\appendix

\section{Auxiliary linear lemmas}
\label{app:linear-lemmas}

\begin{lemma}[Transport equation]\label{app:transport-lemma}
For $3<q<\infty$, let $v\in L^1(0,T;W^{2,q}\cap H_0^1)$ and consider
\[
 \rho_t+\operatorname{div}(\rho v)=0,\qquad \rho(0)=\rho_0,
\]
where $0\le\rho_0\in W^{1,q}$.
Then there is a unique solution $\rho\in C([0,T];W^{1,q})$, which satisfies
\begin{align}
 \|\rho(t)\|_{W^{1,q}}
 &\le \|\rho_0\|_{W^{1,q}}
 \exp\left(C\int_0^t\|v(s)\|_{W^{2,q}}\,\mathrm ds\right),
 \label{app:transport-W1q}\\
 \inf_x\rho(x,t)
 &\ge\inf_x\rho_0(x)
 \exp\left(-\int_0^t\|\operatorname{div}v(s)\|_{L^\infty}\,\mathrm ds\right).
 \label{app:transport-lower}
\end{align}
Let $\rho_i=\rho[v_i]$, $i=1,2$, be the solutions of the
continuity equation corresponding to velocities $v_i$ and
the same initial density $\rho_0$.  If
$\sum_{i=1}^2\|v_i\|_{L^1(0,T; W^{2,q})}\le L$, then, whenever
$v_1-v_2\in L^2(0,T;H^1)$,
\begin{equation}\label{app:transport-difference}
 \|\rho_1-\rho_2\|_{L^\infty(0,T;L^2)}
 \le C T^{1/2}\|v_1-v_2\|_{L^2(0,T;H^1)}.
\end{equation}
Here the constant may depend on $L$, $\|\rho_0\|_{W^{1,q}}$, $q$,
and the domain, but not on the two velocities separately.
\end{lemma}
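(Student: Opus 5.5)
Since $v\in L^1(0,T;W^{2,q})$ with $q>3$, we have $\nabla v\in L^1(0,T;L^\infty)$ by Sobolev embedding. Because $v=0$ on $\partial\Omega$, the flow map
\[
\partial_s X(s;x,t)=v(X(s;x,t),s),\qquad X(t;x,t)=x,
\]
is a well-defined family of bi-Lipschitz homeomorphisms of $\overline\Omega$ preserving $\partial\Omega$. We will write the solution along characteristics as
\[
\rho(x,t)=\rho_0(X(0;x,t))\exp\Bigl(-\int_0^t\operatorname{div}v(X(s;x,t),s)\,\mathrm ds\Bigr).
\]
This formula gives existence and nonnegativity. The lower bound \eqref{app:transport-lower} follows at once by bounding the exponential from below by $\exp(-\int_0^t\|\operatorname{div}v\|_{L^\infty}\mathrm ds)$. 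Uniqueness in $C([0,T];W^{1,q})$ follows from the $L^2$ energy estimate for the linear difference equation: testing by the difference $w$ gives $\frac{\mathrm d}{\mathrm dt}\|w\|_{L^2}^2\le\|\operatorname{div}v\|_{L^\infty}\|w\|_{L^2}^2$, and Gronwall closes it. For the regularity, we will first take $v$ and $\rho_0$ smooth, then pass to general data by mollification, using stability of the a priori bounds. The continuity in time with values in $W^{1,q}$ then follows from the standard DiPerna--Lions renormalization argument.

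For \eqref{app:transport-W1q} we argue formally on smooth solutions. The $L^q$ norm satisfies
\[
\frac{\mathrm d}{\mathrm dt}\|\rho\|_{L^q}\le C\|\nabla v\|_{L^\infty}\|\rho\|_{L^q}.
\]
Differentiating the equation gives
\[
\partial_t\nabla\rho+v\cdot\nabla\nabla\rho=-\nabla v\,\nabla\rho-\operatorname{div}v\,\nabla\rho-\rho\nabla\operatorname{div}v.
\]
Multiplying by $|\nabla\rho|^{q-2}\nabla\rho$ and integrating by parts (the boundary term vanishes since $v|_{\partial\Omega}=0$) yields
\[
\frac{\mathrm d}{\mathrm dt}\|\nabla\rho\|_{L^q}\le C\|\nabla v\|_{L^\infty}\|\nabla\rho\|_{L^q}+\|\rho\|_{L^\infty}\|\nabla^2v\|_{L^q}.
\]
Using $\|\rho\|_{L^\infty}\le C\|\rho\|_{W^{1,q}}$ and $\|\nabla v\|_{L^\infty}\le C\|v\|_{W^{2,q}}$, we get $\frac{\mathrm d}{\mathrm dt}\|\rho\|_{W^{1,q}}\le C\|v\|_{W^{2,q}}\|\rho\|_{W^{1,q}}$, and Gronwall gives \eqref{app:transport-W1q}.

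For the stability estimate \eqref{app:transport-difference}, set $\sigma=\rho_1-\rho_2$. It satisfies
\[
\sigma_t+\operatorname{div}(\sigma v_1)=-\operatorname{div}(\rho_2(v_1-v_2)),\qquad\sigma(0)=0.
\]
Testing by $\sigma$ and integrating by parts gives
\[
\frac{\mathrm d}{\mathrm dt}\|\sigma\|_{L^2}^2\le C\|\nabla v_1\|_{L^\infty}\|\sigma\|_{L^2}^2+C\|\sigma\|_{L^2}\bigl(\|\nabla\rho_2\|_{L^q}\|v_1-v_2\|_{L^{2q/(q-2)}}+\|\rho_2\|_{L^\infty}\|\nabla(v_1-v_2)\|_{L^2}\bigr).
\]
Here $\|v_1-v_2\|_{L^{2q/(q-2)}}\le C\|v_1-v_2\|_{H^1}$ because $2q/(q-2)<6$ for $q>3$. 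By \eqref{app:transport-W1q}, $\|\rho_2\|_{W^{1,q}}$ is bounded by a constant depending on $L$ and $\|\rho_0\|_{W^{1,q}}$. Dividing by $\|\sigma\|_{L^2}$ we obtain
\[
\frac{\mathrm d}{\mathrm dt}\|\sigma\|_{L^2}\le C\|\nabla v_1\|_{L^\infty}\|\sigma\|_{L^2}+C\|v_1-v_2\|_{H^1}.
\]
Gronwall with $\int_0^T\|\nabla v_1\|_{L^\infty}\le CL$ then gives
\[
\|\sigma(t)\|_{L^2}\le C\int_0^t\|v_1-v_2\|_{H^1}\,\mathrm ds\le CT^{1/2}\|v_1-v_2\|_{L^2(0,T;H^1)}
\]
by Cauchy--Schwarz. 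This bound is rigorous after regularization, or directly, since $\sigma\in C([0,T];L^2)$ and the equation holds in $L^2$.

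The main obstacle is justifying the $W^{1,q}$ energy computation and the time continuity at the stated low regularity, where $v$ is only in $L^1$ in time and $\nabla\rho$ is only in $L^q$. The plan is to handle this by mollifying $v$ and $\rho_0$, using the uniform bounds above together with the $L^2$ stability estimate to pass to the limit, and obtaining strong continuity in $W^{1,q}$ via renormalized solutions of the equation for $\nabla\rho$.
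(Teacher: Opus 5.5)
Your proposal is correct and follows essentially the same route as the paper: the characteristic formula for existence and the lower bound, the differentiated equation with H\"older and Sobolev for the $W^{1,q}$ Gronwall bound, an approximation argument for time continuity, and the $L^2$ energy estimate with $H^1\hookrightarrow L^{2q/(q-2)}$ for the difference. The only deviations are minor: you obtain uniqueness from the $L^2$ energy estimate rather than from the characteristic formula, and where you divide by $\|\sigma\|_{L^2}$ (formally problematic where it vanishes) the paper uses $y_\varepsilon=(\|\sigma\|_{L^2}^2+\varepsilon^2)^{1/2}$ and lets $\varepsilon\to0$, a one-line fix you should adopt.
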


\begin{proof}
Since $W^{2,q}\hookrightarrow W^{1,\infty}$, the velocity generates
a unique flow $X$ mapping $\overline\Omega$ onto itself. The method of characteristics then gives
\[
 \rho(t,X(t,x))=\rho_0(x)
 \exp\left(-\int_0^t\operatorname{div}v(s,X(s,x))\,\mathrm ds\right),
\]
which proves \eqref{app:transport-lower}. Differentiating the continuity equation in space and applying
H\"older's inequality and Sobolev embedding, we obtain
$\frac{\mathrm d}{\mathrm dt}\|\rho\|_{W^{1,q}}
\le C\|v\|_{W^{2,q}}\|\rho\|_{W^{1,q}}$.
It can be verified from Gronwall's inequality that \eqref{app:transport-W1q} holds, and a standard approximation argument gives
$\rho\in C([0,T];W^{1,q})$.

To estimate the difference, set $r=\rho_1-\rho_2$ and $V=v_1-v_2$. Then
\[
 r_t+v_1\cdot\nabla r+r\operatorname{div}v_1
 =-V\cdot\nabla\rho_2-\rho_2\operatorname{div}V,\qquad r(0)=0.
\]
Testing it by $r$, using $H^1\hookrightarrow L^{2q/(q-2)}$ and
\eqref{app:transport-W1q}, one computes, with
$y_\varepsilon=(\|r\|_2^2+\varepsilon^2)^{1/2}$,
\[
 y_\varepsilon'\le C\|\nabla v_1\|_\infty y_\varepsilon
                  +C_L\|V\|_{H^1},\qquad y_\varepsilon(0)=\varepsilon.
\]
Applying Gronwall's inequality, letting $\varepsilon\to0$,
and using Cauchy's inequality in time yields
\eqref{app:transport-difference}.
Uniqueness follows from the characteristic formula.
\end{proof}

\begin{lemma}[Linear parabolic Lam\'e system]
	\label{app:lame-parabolic-lemma}
	Let $\Omega\subset\mathbb R^3$ be a bounded domain with
	$C^2$ boundary, and let $3<q\le6$.
	Assume that
	\[
	0<a_*\le a(x,t)\le a^*,\qquad
	a\in L^\infty(0,T;W^{1,q}),\qquad
	a_t\in L^\infty(0,T;L^q).
	\]
	Let $w_0\in H^2\cap H_0^1$, $f\in C([0,T];L^2)\cap L^2(0,T;L^q)$ and
	$f_t\in L^2(0,T;H^{-1}),$
	where $H^{-1}=(H_0^1)'$, and define
	\[
	w_1:=a(0)^{-1}\bigl(\mathcal Lw_0+f(0)\bigr)\in L^2.
	\]
	Then the initial-boundary value problem
	\[
	\begin{cases}
		aw_t-\mathcal Lw=f
		&\text{in }\Omega\times(0,T),\\
		w=0  &\text{on }\partial\Omega\times(0,T),\\
		w(0)=w_0
		&\text{in }\Omega
	\end{cases}
	\]
	admits a unique solution satisfying
	\[
	\begin{aligned}
		w&\in C([0,T];H^2\cap H_0^1)
		\cap L^2(0,T;W^{2,q}),\\
		w_t&\in C([0,T];L^2)
		\cap L^2(0,T;H_0^1),
		\qquad w_t(0)=w_1.
	\end{aligned}
	\]
	Moreover,
	\begin{align}
		&\|w\|_{L^\infty(0,T;H^2)}
		+\|w\|_{L^2(0,T;W^{2,q})}
		+\|w_t\|_{L^\infty(0,T;L^2)}
		+\|w_t\|_{L^2(0,T;H_0^1)}
		\notag\\
		&\qquad\le C\Bigl(
		\|w_0\|_{H^2}
		+\|w_1\|_{L^2}
		+\|f\|_{L^\infty(0,T;L^2)}
		+\|f\|_{L^2(0,T;L^q)}
		+\|f_t\|_{L^2(0,T;H^{-1})}
		\Bigr).
		\label{app:lame-parabolic-estimate}
	\end{align}
	Here $C$ depends only on $\Omega$, $\lambda$, $\eta$, $q$,
	$a_*^{-1}$, $a^*$,
	$\|a\|_{L^\infty(0,T;W^{1,q})}$,
	$\|a_t\|_{L^\infty(0,T;L^q)}$,
	and an upper bound for $T$.
\end{lemma}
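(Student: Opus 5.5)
The plan is to prove Lemma~\ref{app:lame-parabolic-lemma} by a Faedo--Galerkin approximation in the eigenbasis of the Dirichlet Lam\'e operator, closing uniform estimates at three levels, passing to the limit, and recovering the top spatial regularity from elliptic theory.

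\textbf{Step 1: Galerkin system.} Let $\{e_k\}$ be the $L^2$-orthonormal eigenfunctions of $-\mathcal L$ with homogeneous Dirichlet data. They are smooth by the $C^2$ boundary regularity of the Lam\'e system. Seek $w_m=\sum_{k\le m}c_k(t)e_k$ solving
\[
\int a\,\partial_tw_m\cdot e_k\,\mathrm dx-\int\mathcal Lw_m\cdot e_k\,\mathrm dx=\int f\cdot e_k\,\mathrm dx,\qquad k\le m,
\]
with $w_m(0)=P_mw_0$. The mass matrix $\bigl(\int a e_j\cdot e_k\bigr)$ is uniformly positive definite because $a\ge a_*$. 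It is also Lipschitz in $t$, since $a_t\in L^\infty(0,T;L^q)$. Hence the resulting linear ODE system is globally solvable on $[0,T]$.

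\textbf{Step 2: uniform estimates.}
\begin{itemize}
\item[(i)] Test by $\partial_tw_m$. This gives $a_*\|\partial_tw_m\|_{L^2}^2+\frac12\frac{\mathrm d}{\mathrm dt}\langle-\mathcal Lw_m,w_m\rangle\le\|f\|_{L^2}\|\partial_tw_m\|_{L^2}$, which bounds $w_m$ in $L^\infty(H^1_0)$ and $\partial_tw_m$ in $L^2(L^2)$.
\item[(ii)] Differentiate the Galerkin system in $t$ and test by $\partial_tw_m$. This yields
\[
\frac12\frac{\mathrm d}{\mathrm dt}\int a|\partial_tw_m|^2+c_0\|\nabla\partial_tw_m\|_{L^2}^2=-\frac12\int a_t|\partial_tw_m|^2+\langle f_t,\partial_tw_m\rangle.
\]
The first term on the right is bounded by $\|a_t\|_{L^q}\|\partial_tw_m\|_{L^2}^{2-3/q}\|\nabla\partial_tw_m\|_{L^2}^{3/q}$ via Gagliardo--Nirenberg, since $q>3$ puts $\partial_tw_m$ in $L^{2q/(q-1)}$ with an absorbable power. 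The second term is bounded by $\|f_t\|_{H^{-1}}\|\nabla\partial_tw_m\|_{L^2}$.
\end{itemize}

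\textbf{Step 3: initial value of the time derivative (main obstacle).} Closing (ii) with Gronwall needs a uniform bound on $\|\partial_tw_m(0)\|_{L^2}$. Testing the Galerkin equation at $t=0$ by $\partial_tw_m(0)$ gives
\[
\|\sqrt{a(0)}\,\partial_tw_m(0)\|_{L^2}\le a_*^{-1/2}\|\mathcal LP_mw_0+f(0)\|_{L^2}.
\]
Because the $e_k$ are Lam\'e eigenfunctions, $\mathcal LP_mw_0=P_m\mathcal Lw_0$, so this is bounded by $C\bigl(\|w_0\|_{H^2}+\|f(0)\|_{L^2}\bigr)$. This bound is controlled by the right side of \eqref{app:lame-parabolic-estimate}, and $f\in C([0,T];L^2)$ makes $f(0)$ meaningful. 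With this I expect to obtain uniform bounds for $\partial_tw_m$ in $L^\infty(L^2)\cap L^2(H^1_0)$.

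\textbf{Step 4: elliptic regularity.} Rewrite the equation as $-\mathcal Lw_m=P_m(f-a\partial_tw_m)$, using the commutation of $P_m$ with $\mathcal L$. The $H^2$ estimate then gives $w_m\in L^\infty(H^2)$. For $L^2(W^{2,q})$ I would pass to the limit first and apply the $W^{2,q}$ estimate to $-\mathcal Lw=f-aw_t$. Here $aw_t\in L^2(L^6)\subset L^2(L^q)$ since $q\le6$ and $H_0^1\hookrightarrow L^6$, which avoids $L^q$-boundedness of $P_m$.

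\textbf{Step 5: limit and conclusion.} Weak-star compactness gives a solution with the stated bounds. Continuity of $w$ in $H^2$ and of $w_t$ in $L^2$ follows as follows:
\begin{itemize}
\item for $w_t$, from $w_t\in L^2(H_0^1)$ together with $(a w_t)_t\in L^2(H^{-1})$, via the Lions--Magenes lemma applied to the weighted quantity $\sqrt a\,w_t$;
\item for $w$, from the elliptic identity, since $f-aw_t\in C([0,T];L^2)$.
\end{itemize}
The equation at $t=0$ then gives $w_t(0)=w_1$. For uniqueness, test the difference equation with zero data by the difference itself. This yields $\frac{\mathrm d}{\mathrm dt}\int a|w|^2\le\int a_t|w|^2$, and the same Gagliardo--Nirenberg absorption as in (ii) closes Gronwall.
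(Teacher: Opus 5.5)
Your proposal is correct and follows essentially the same route as the paper. The ingredients match: a Galerkin scheme in Dirichlet Lam\'e eigenfunctions (your $L^2$ projection $P_mw_0$ coincides with the paper's Ritz projection for this basis), the time-differentiated weighted energy estimate with the $\|a_t\|_{L^q}$ term absorbed by interpolation, the bound on $\partial_tw_m(0)$ obtained by testing the Galerkin equation at $t=0$, elliptic $H^2$ and $W^{2,q}$ recovery using $H_0^1\hookrightarrow L^q$, and Lions--Magenes for time continuity.

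The differences are minor. Your first-level estimate is not needed; the paper gets the bounds on $w_N$ from the $w_{N,t}$ bound together with convergence of the initial projection. The paper also applies Lions--Magenes directly to $aw_t\in L^2(0,T;H_0^1)$, using $(aw_t)_t=\mathcal Lw_t+f_t$, and then divides by $a$. That route is cleaner than working with $\sqrt a\,w_t$, because it never differentiates $\sqrt a$. In either version you should state explicitly that $a^{-1}\in C([0,T];L^\infty)$, which follows by interpolation from the hypotheses on $a$.
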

\begin{proof}
	Define the bilinear form
	$\mathcal A(u,z)
	:=\lambda(\nabla u,\nabla z)
	+(\lambda+\eta)(\operatorname{div}u,\operatorname{div}z).$
	By coercivity and Poincar\'e's inequality,
	$\mathcal A(z,z)\ge c\|z\|_{H_0^1}^2$.
	Let $V_N$ be the span of the first $N$ Dirichlet eigenfunctions
	of $-\mathcal L$, and let $w_{0N}$ be the Ritz projection of
	$w_0$ onto $V_N$. We consider the Galerkin problem
	\[
	(aw_{N,t},z)+\mathcal A(w_N,z)=(f,z),
	\qquad z\in V_N,
	\qquad w_N(0)=w_{0N}.
	\]
	Since $a\ge a_*>0$, the mass matrix is positive definite.
	The Ritz projection property and the definition of $w_1$ imply
	\[
	(a(0)w_{N,t}(0),z)=(a(0)w_1,z),
	\qquad z\in V_N.
	\]
	Taking $z=w_{N,t}(0)$ and applying Cauchy--Schwarz gives
	\[
	E_N(0)\le a^*\|w_1\|_{L^2}^2,
	\qquad
	E_N(t):=\int a(t)|w_{N,t}(t)|^2\,\mathrm dx.
	\]
	
	Differentiating the Galerkin equations in time and testing
	by $w_{N,t}$, we obtain
	\[
	\frac12E_N'
	+\mathcal A(w_{N,t},w_{N,t})
	=\langle f_t,w_{N,t}\rangle
	-\frac12\int a_t|w_{N,t}|^2\,\mathrm dx.
	\]
	For $\beta=2q/(2q-3)$, Sobolev interpolation and Young's
	inequality yield
	\[
	\left|\int a_t|z|^2\,\mathrm dx\right|
	\le \varepsilon\|z\|_{H_0^1}^2
	+C_\varepsilon\|a_t\|_{L^q}^{\beta}\|z\|_{L^2}^2.
	\]
	Estimating the duality term, absorbing the terms involving
	$\|w_{N,t}\|_{H_0^1}^2$, and applying Gronwall's inequality,
	we obtain
	\[
	\|w_{N,t}\|_{L^\infty(0,T;L^2)}
	+\|w_{N,t}\|_{L^2(0,T;H_0^1)}
	\le C\left(
	\|w_1\|_{L^2}
	+\|f_t\|_{L^2(0,T;H^{-1})}
	\right).
	\]
	Together with the convergence of $w_{0N}$ in $H_0^1$, this
	bounds $w_N$ in $H^1(0,T;H_0^1)$ uniformly in $N$.
	Weak compactness then yields a solution $w$ with
	$w(0)=w_0$.
	
	Applying Dirichlet elliptic estimates to
	$\mathcal Lw=aw_t-f$ and using
	$H_0^1(\Omega)\hookrightarrow L^q(\Omega)$ for $q\le6$, we obtain
	\[
	\begin{aligned}
		&\|w\|_{L^\infty(0,T;H^2)}
		+\|w\|_{L^2(0,T;W^{2,q})}\\
		&\qquad\le C\Bigl(
		\|w_t\|_{L^\infty(0,T;L^2)}
		+\|w_t\|_{L^2(0,T;H_0^1)}
		+\|f\|_{L^\infty(0,T;L^2)}
		+\|f\|_{L^2(0,T;L^q)}
		\Bigr).
	\end{aligned}
	\]
	This proves \eqref{app:lame-parabolic-estimate} with the
	stated dependence of the constant.
	
	It remains to establish time continuity and identify the
	initial value of $w_t$. Since $q>3$, the embedding $W^{1,q}(\Omega)\hookrightarrow L^\infty(\Omega)$ holds. The assumptions $a\in L^\infty(0,T;W^{1,q}(\Omega))$ and $a_t\in L^\infty(0,T;L^q(\Omega))$, together with interpolation, imply $a\in C([0,T];L^\infty(\Omega))$. The uniform lower bound $a\ge a_*>0$ then yields $a^{-1}\in C([0,T];L^\infty(\Omega))$.  
	Set $m:=aw_t$. Since multiplication by a $W^{1,q}$ function
	is bounded on $H_0^1$, we have
	\[
	m\in L^2(0,T;H_0^1),
	\qquad
	m_t=\mathcal Lw_t+f_t\in L^2(0,T;H^{-1}).
	\]
	The Lions--Magenes lemma \cite{LionsMagenes1972} therefore
	implies $m\in C([0,T];L^2)$. Hence
	$w_t=a^{-1}m\in C([0,T];L^2)$.
	Since $\mathcal Lw=m-f\in C([0,T];L^2)$,
	elliptic regularity further gives
	$w\in C([0,T];H^2\cap H_0^1)$.
	Evaluating the equation at $t=0$, we conclude that
	$w_t(0)=w_1$.
	
	Finally, testing the homogeneous difference equation by
	the time derivative of the difference and integrating
	in time proves uniqueness.
\end{proof}

\begin{lemma}[Weighted phase projection]
	\label{app:weighted-projection-lemma}
	Let $\Omega\subset\mathbb R^3$ be a bounded domain with
	$C^2$ boundary, and let $3<q<\infty$.
	Let $\{e_j\}_{j\ge1}$ be an $L^2$-orthonormal basis of
	Neumann Laplace eigenfunctions, with $e_1$ constant, and set
	\[
	X_m:=\operatorname{span}\{e_1,\ldots,e_m\}.
	\]
	Suppose that $a\in W^{1,q}(\Omega)$ and $a\ge a_*>0$.
	For $f\in H^1(\Omega)$, define $\mathcal P_m^a f\in X_m$ by
	\[
	(a\mathcal P_m^a f,\chi)=(af,\chi),
	\qquad \chi\in X_m.
	\]
	Then $\mathcal P_m^a$ is well defined and satisfies
	\begin{equation}
		\|\mathcal P_m^a f\|_{H^1}
		\le C\|f\|_{H^1},
		\label{app:weighted-projection-estimate}
	\end{equation}
	where $C$ depends only on $\Omega$, $q$, $a_*^{-1}$,
	and $\|a\|_{W^{1,q}}$, and is independent of $m$.
\end{lemma}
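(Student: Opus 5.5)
Well-posedness of \(\mathcal P_m^a\) follows from Lax--Milgram applied to the finite-dimensional problem. The quantity
\[
\chi\mapsto (a\chi,\chi)\ge a_*\|\chi\|_{L^2}^2
\]
is a coercive bilinear form on \(X_m\), so the Gram matrix \(((ae_i,e_j))_{i,j\le m}\) is positive definite. Hence there is a unique \(\mathcal P_m^a f\in X_m\) with \((a\mathcal P_m^a f,\chi)=(af,\chi)\) for all \(\chi\in X_m\). Testing with \(\chi=\mathcal P_m^a f\) and using Cauchy--Schwarz gives the \(L^2\) bound
\[
a_*\|\mathcal P_m^a f\|_{L^2}^2\le \|a\|_{L^\infty}\|f\|_{L^2}\|\mathcal P_m^a f\|_{L^2}.
\]
Since \(W^{1,q}\hookrightarrow L^\infty\) for \(q>3\), this yields \(\|\mathcal P_m^a f\|_{L^2}\le C\|f\|_{L^2}\), uniformly in \(m\).

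For the \(H^1\) bound, I would compare \(\mathcal P_m^a f\) with the unweighted spectral projection \(P_m f\), the \(L^2\)-orthogonal projection onto \(X_m\). Because \(X_m\) is spanned by Neumann eigenfunctions, \(P_m\) commutes with the Neumann Laplacian and is therefore a contraction on \(H^1\) in the norm \(\|\cdot\|_{L^2}^2+\|\nabla\cdot\|_{L^2}^2\). Set \(w:=\mathcal P_m^a f-P_m f\in X_m\). Then for all \(\chi\in X_m\),
\[
(aw,\chi)=(a(f-P_mf),\chi).
\]
Take \(\chi=-\Delta_N w\in X_m\), where \(\Delta_N\) is the Neumann Laplacian; this is admissible because \(X_m\) is invariant under \(\Delta_N\). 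Integrating by parts gives
\[
(aw,-\Delta_N w)=\int a|\nabla w|^2+\int w\nabla a\cdot\nabla w ,
\]
and the right-hand side similarly becomes
\[
\int a\nabla(f-P_mf)\cdot\nabla w+\int (f-P_mf)\nabla a\cdot\nabla w .
\]
The boundary terms vanish because \(\partial_n w=0\). Estimating with \(\|\nabla a\|_{L^q}\), the lower-order products are bounded by
\[
\|\nabla a\|_{L^q}\,\|g\|_{L^{2q/(q-2)}}\,\|\nabla w\|_{L^2},
\]
with \(g=w\) or \(g=f-P_mf\). Since \(2q/(q-2)<6\), the Gagliardo--Nirenberg interpolation
\[
\|g\|_{L^{2q/(q-2)}}\le C\|g\|_{L^2}^{1-3/q}\|g\|_{H^1}^{3/q}
\]
together with Young's inequality absorbs these terms into \(\tfrac{a_*}{2}\|\nabla w\|_{L^2}^2\). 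The remaining contributions are controlled by \(\|w\|_{L^2}\) and \(\|f-P_mf\|_{H^1}\le 2\|f\|_{H^1}\). The \(L^2\) bound on \(w\) comes from the first step together with \(\|P_mf\|_{L^2}\le\|f\|_{L^2}\). This gives \(\|\nabla w\|_{L^2}\le C\|f\|_{H^1}\), and the triangle inequality then yields \eqref{app:weighted-projection-estimate}.

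The main point to check is the admissibility of \(\chi=-\Delta_N w\) and the integration by parts for a product with a merely \(W^{1,q}\) coefficient. Since \(w\) is a finite combination of smooth eigenfunctions (the boundary being \(C^2\) suffices for \(H^2\) regularity of the \(e_j\)), and \(a\nabla w\in W^{1,1}\) in the needed sense, the identity \((aw,-\Delta w)=(\nabla(aw),\nabla w)\) holds. The constant depends only on \(a_*^{-1}\), \(\|a\|_{W^{1,q}}\), \(q\) and \(\Omega\), and not on \(m\).
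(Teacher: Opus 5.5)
Your proof is correct and follows essentially the paper's argument. Both test with the Neumann Laplacian of the discrete function, which is admissible because $X_m$ is invariant under $\Delta_N$, integrate by parts against the weight, and absorb the $\nabla a$ term via Gagliardo--Nirenberg with $\theta=3/q<1$. The only difference is your subtraction of the unweighted projection $P_m f$, which is harmless but unnecessary: the paper tests directly with $\chi=-\Delta z$, $z=\mathcal P_m^a f$, and bounds $\int f\,\nabla a\cdot\nabla z$ using $\|f\|_{L^{2q/(q-2)}}\le C\|f\|_{H^1}$.
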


\begin{proof}
Since $a\ge a_*>0$, the weighted mass matrix is positive
definite, so the defining equation admits a unique solution
$z:=\mathcal P_m^a f$. Taking $\chi=z$ gives
\begin{equation}\label{app:weighted-projection-L2}
	\|z\|_{L^2}
	\le a_*^{-1}\|a\|_{L^\infty}\|f\|_{L^2}.
\end{equation}
Since $-\Delta z\in X_m$ and $\partial_n z=0$, we may take
$\chi=-\Delta z$. Integrating by parts, we obtain
\[
 \int a|\nabla z|^2=\int a\nabla f\cdot\nabla z
                       +\int(f-z)\nabla a\cdot\nabla z.
\]
For $p=2q/(q-2)$ and $\theta=3/q<1$, Sobolev interpolation gives
$\|z\|_p\le C\|z\|_2^{1-\theta}\|\nabla z\|_2^\theta+C\|z\|_2$.
H\"older's and Young's inequalities therefore imply
\[
 a_*\|\nabla z\|_2^2\le\tfrac12a_*\|\nabla z\|_2^2
                       +C(\|f\|_{H^1}^2+\|z\|_2^2).
\]
Absorbing the gradient term and using
\eqref{app:weighted-projection-L2} yields
\eqref{app:weighted-projection-estimate}.
All constants depend only on the stated quantities and
are independent of $m$.
\end{proof}

\section{Proofs for the positive-density problem}
\label{app:construction}
This appendix proves Theorem~\ref{positive-density-thm} and
Proposition~\ref{positive-density-continuation}. Only the phase
variables are Galerkin truncated; the density and velocity retain
their full spatial equations on the original bounded domain $\Omega$.
We use the auxiliary results of Appendix~\ref{app:linear-lemmas}.

\subsection{The semidiscrete system}\label{app:semidiscrete-system}

Let $\{e_j\}_{j\ge1}$ be an $L^2$-orthonormal basis of Neumann Laplace
eigenfunctions, including the constant eigenfunction. Set
$X_m=\operatorname{span}\{e_1,\ldots,e_m\}$ and let $Q_m$ denote the
$L^2$-orthogonal projection onto $X_m$. We use the weighted projection
$\mathcal P_m^a$ from Lemma~\ref{app:weighted-projection-lemma}.

For the initial data, set $\phi_{0m}=Q_m\phi_0$ and
$\widehat\mu_{0m}=Q_m\mu_0$.  Take
\begin{equation}\label{app:initial-phase-projection}
\begin{aligned}
 b_m&=
 \frac{\displaystyle\int R\,\mathrm dx-
 \displaystyle\int
 \bigl(\rho_0\widehat\mu_{0m}+\Delta\phi_{0m}
 -\rho_0F(\phi_{0m})\bigr)\,\mathrm dx}
 {\displaystyle\int \rho_0\,\mathrm dx},\qquad
 \mu_{0m}=\widehat\mu_{0m}+b_m,\\
 R_m&=\rho_0\mu_{0m}+\Delta\phi_{0m}-\rho_0F(\phi_{0m}).
\end{aligned}
\end{equation}
Then $\int  R_m\,\mathrm dx=\int  R\,\mathrm dx = 0$ for every
$m$.
The correction $b_m$ changes only the constant eigenfunction, so
$\mu_{0m}\in X_m$ and its Neumann condition is preserved.
It can be verified from the construction above that
$(\phi_{0m},\mu_{0m},R_m)\to(\phi_0,\mu_0,R)$
in $H^3\times H^3\times H^1$.
Indeed, the Neumann spectral characterization gives
$\phi_{0m}\to\phi_0$ and $\widehat\mu_{0m}\to\mu_0$ in $H^3$, which further implies $b_m\to0$.  It follows that $\mu_{0m}\to\mu_0$ in $H^3$, and
\begin{align*}
 R_m-R=\rho_0(\mu_{0m}-\mu_0)+\Delta(\phi_{0m}-\phi_0)
 -\rho_0\bigl(F(\phi_{0m})-F(\phi_0)\bigr)
 \rightarrow0\quad\hbox{in }H^1.
\end{align*}
For the compatibility conditions, set
\begin{equation}\label{app:h0m-g0m}
	h_{0m} =\rho_0^{-1}\Delta\mu_{0m},\qquad
	g_{0m} =\rho_0^{-1/2}
	\left[
	\mathcal Lu_0-\nabla\rho_0^\gamma
	-(\Delta\phi_{0m})\nabla\phi_{0m}
	\right].
\end{equation}
The multiplier property of $\rho_0^{-1}\in W^{1,q}$ and
$H^3\hookrightarrow W^{1,\infty}$ give
\[
 \|h_{0m}-h_0\|_{H^1}
 \le C\|\mu_{0m}-\mu_0\|_{H^3},\qquad
 \|g_{0m}-g_0\|_2
 \le C(\|\phi_{0m}\|_{H^3}+\|\phi_0\|_{H^3})
       \|\phi_{0m}-\phi_0\|_{H^3}.
\]
Thus $h_{0m}\to h_0$ in $H^1$ and $g_{0m}\to g_0$ in $L^2$;
both sequences are uniformly bounded. Here and below these
constants may depend on $\underline\rho^{-1}$.
Moreover, since $\mu_{0m}\in X_m$ satisfies
$\partial_n\mu_{0m}=0$, one has
\begin{equation}\label{app:h0m-zero-mean}
	\int\rho_0h_{0m}\,\mathrm dx
	=
	\int\Delta\mu_{0m}\,\mathrm dx
	=
	\int_{\partial\Omega}\partial_n\mu_{0m}\,\mathrm dS
	=0.
\end{equation}

We now consider the following semi-discrete system,
in which $\rho_m$ and $u_m$ remain full spatial functions, whereas
$\phi_m,\mu_m\in X_m$.
\begin{align}
	&\rho_{m,t}+\operatorname{div}(\rho_m u_m)=0,
	\label{app:m-continuity}\\
	&\rho_m(u_{m,t}+u_m\cdot\nabla u_m)-\mathcal Lu_m
	+\nabla\rho_m^\gamma
	=\bigl[\rho_m\mu_m-\rho_mF(\phi_m)-R_m\bigr]\nabla\phi_m,
	\label{app:m-momentum}\\
	&(\rho_m(\phi_{m,t}+u_m\cdot\nabla\phi_m),\chi)
	+(\nabla\mu_m,\nabla\chi)=0,
	\qquad \chi\in X_m,
	\label{app:m-phase}\\
	&(\rho_m\mu_m,\zeta)
	=(\nabla\phi_m,\nabla\zeta)
	+(\rho_mF(\phi_m)+R_m,\zeta),
	\qquad \zeta\in X_m.
	\label{app:m-chemical}
\end{align}
The system is supplemented with the initial and boundary conditions
\begin{align}
	&(\rho_m,u_m,\phi_m,\mu_m)|_{t=0}
	=(\rho_0,u_0,\phi_{0m},\mu_{0m}),
	\label{app:m-initial}\\
	&u_m=0,\qquad
	\partial_n\phi_m=\partial_n\mu_m=0
	\quad\text{on }\partial\Omega\times(0,T_m).
	\label{app:m-boundary}
\end{align}
Furthermore, the initial data satisfy the discrete chemical and phase
compatibility conditions:
\begin{align}
	\rho_0\mu_{0m}
	&=-\Delta\phi_{0m}+\rho_0F(\phi_{0m})+R_m,
	\label{app:m-initial-chemical}\\
	\Delta\mu_{0m}
	&=\rho_0h_{0m},
	\qquad
	\int\rho_0h_{0m}\,\mathrm dx=0.
	\label{app:m-initial-phase}
\end{align}
\begin{remark}
	At the semi-discrete level, the chemical-potential relation holds
	only when tested against functions in $X_m$. Consequently, the
	capillary force
	$[\rho_m\mu_m-\rho_mF(\phi_m)-R_m]\nabla\phi_m$
	need not coincide with $-(\Delta\phi_m)\nabla\phi_m$ for fixed $m$.
	This choice preserves the cancellation of the coupling terms
	in the basic energy estimate, with the contribution of $R_m$
	retained explicitly. The original capillary force is recovered
	in the limit $m\to\infty$.
\end{remark}

We first record several identities used in the subsequent
estimates. Since $\Delta\phi_m,\Delta\mu_m\in X_m$, the phase
equations give
\begin{equation}\label{app:phase-spectral-identities}
	\begin{aligned}
		\Delta\mu_m
		&=Q_m\bigl[\rho_m(\phi_{m,t}+u_m\cdot\nabla\phi_m)\bigr],\\
		-\Delta\phi_m
		&=Q_m\bigl[\rho_m\mu_m-\rho_mF(\phi_m)-R_m\bigr].
	\end{aligned}
\end{equation}
Set
$\mathcal A_m^{\rm ch}:=\rho_m\mu_m-\rho_mF(\phi_m)-R_m$
and
$\mathcal G_m:=\mathcal A_m^{\rm ch}+\Delta\phi_m$.
Then
\[
\mathcal G_m=(I-Q_m)\mathcal A_m^{\rm ch},
\qquad Q_m\mathcal G_m=0,
\]
and the force can be written as
\[
\mathcal A_m^{\rm ch}\nabla\phi_m
=-(\Delta\phi_m)\nabla\phi_m+\mathcal G_m\nabla\phi_m.
\]
Since $Q_m$ and $R_m$ are independent of time,
\begin{equation}\label{app:chemical-residual-time}
	\partial_t\mathcal G_m
	=(I-Q_m)\bigl[
	\rho_m\mu_{m,t}+\rho_{m,t}\mu_m
	-\rho_{m,t}F(\phi_m)
	-\rho_mF'(\phi_m)\phi_{m,t}
	\bigr]
\end{equation}
in the sense of distributions. The Neumann spectral projection
bounds imply
\begin{align*}
	\|\mathcal G_m\|_{L^2}
	&\le \|\mathcal A_m^{\rm ch}\|_{L^2},
	\qquad
	\|\mathcal G_m\|_{H^1}
	\le C\|\mathcal A_m^{\rm ch}\|_{H^1},\\
	\|\partial_t\mathcal G_m\|_{L^2}
	&\le \|\rho_m\mu_{m,t}\|_{L^2}
	+\|\rho_{m,t}\mu_m\|_{L^2}
	+\|\rho_{m,t}F(\phi_m)\|_{L^2}
	+\|\rho_mF'(\phi_m)\phi_{m,t}\|_{L^2},
\end{align*}
where $C$ is independent of $m$. Moreover,
\eqref{app:m-initial-chemical} gives $\mathcal G_m(0)=0$.
Although $Q_m\mathcal G_m=0$, the pairing
$(\mathcal G_m,u_{m,t}\cdot\nabla\phi_m)$ need not vanish,
since its second factor generally lies outside $X_m$.

Finally, testing the phase equations by $1\in X_m$ and
using the continuity equation and the boundary conditions,
we obtain
\begin{equation}\label{app:discrete-constant-modes}
	\begin{aligned}
		\int\rho_m\mu_m\,\mathrm dx
		&=\int\rho_mF(\phi_m)\,\mathrm dx+\int R_m\,\mathrm dx,\\
		\frac{\mathrm d}{\mathrm dt}\int\rho_m\phi_m\,\mathrm dx
		&=0,
		\qquad
		\frac{\mathrm d}{\mathrm dt}\int\rho_m\,\mathrm dx=0.
	\end{aligned}
\end{equation}
These relations provide the weighted mean bounds required
for the Poincar\'e estimates.

\subsection{Local solvability at fixed $m$}

Fix $m\ge1$. In this step, we will apply the Banach fixed-point theorem to construct a unique local solution
$(\rho_m,u_m,\phi_m,\mu_m)$ of
\eqref{app:m-continuity}-\eqref{app:m-boundary}
on some interval $[0,T_m]$, with
\begin{align}
	&\rho_m\in C([0,T_m];W^{1,q}),
	\qquad
	\rho_{m,t}\in L^\infty(0,T_m;L^q),
	\label{app:fixed-m-rho-regularity}\\
	&u_m\in L^\infty(0,T_m;H^2\cap H_0^1)
	\cap L^2(0,T_m;W^{2,q}),
	\label{app:fixed-m-u-regularity}\\
	&u_{m,t}\in L^\infty(0,T_m;L^2)
	\cap L^2(0,T_m;H_0^1),
	\label{app:fixed-m-ut-regularity}\\
	&\phi_m\in W^{2,\infty}(0,T_m;X_m),
	\qquad
	\mu_m\in W^{1,\infty}(0,T_m;X_m).
	\label{app:fixed-m-phase-regularity}
\end{align}
The construction also ensures that
\begin{equation}\label{app:fixed-m-positive}
	\rho_m(x,t)\ge\frac12\underline\rho
	\qquad
	\text{on }\overline\Omega\times[0,T_m].
\end{equation}
Here $T_m$ and the corresponding bounds may depend on $m$
and $\underline\rho^{-1}$.

\textbf{Step 1: Fixed-point space and trial functions.}
To begin with, for $T>0$ to be chosen below, define
	\begin{align*}
		\mathcal V_T:=\Bigl\{v:\;
		v\in L^\infty(0,T;H^2\cap H_0^1)
		\cap L^2(0,T;W^{2,q}),v_t\in L^\infty(0,T;L^2)
		\cap L^2(0,T;H_0^1)
		\Bigr\}.
	\end{align*}
	Set $\mathcal P_{m,T}:=W^{1,\infty}(0,T;X_m)$ and
		$\mathcal X_{m,T}:=\mathcal V_T\times\mathcal P_{m,T}$.
	For $(v,\psi) \in \mathcal X_{m,T}$, we use the norm
	\begin{align*}
		\|(v,\psi)\|_{\mathcal X_{m,T}}
		:={}&
		\|v\|_{L^\infty(0,T; H^2)}
		+\|v\|_{L^2(0,T; W^{2,q})}
		+\|v_t\|_{L^\infty(0,T; L^2)}
		+\|v_t\|_{L^2(0,T; H_0^1)}\\
		&+\|\psi\|_{L^\infty(0,T; X_m)}
		+\|\psi_t\|_{L^\infty(0,T; X_m)}.
	\end{align*}
	Choose a sufficiently large $M$ and define
	\begin{equation}\label{app:fixed-ball}
		\mathfrak B_{M,T}
		:=
		\left\{
		(v,\psi)\in\mathcal X_{m,T}:
		\|(v,\psi)\|_{\mathcal X_{m,T}}\le M,\quad
		v(0)=u_0,\quad
		\psi(0)=\phi_{0m}
		\right\}.
	\end{equation}
	It is obvious that the set $\mathfrak B_{M,T}$ is nonempty.
	For $(v,\psi)\in\mathfrak B_{M,T}$, we have
	$
	v\in W^{1,\infty}(0,T;L^2)
	\cap W^{1,2}(0,T;H_0^1)$.
	In particular, $v\in C([0,T];H_0^1)$.
	Since $v(0)=u_0$, the fundamental theorem of calculus and
	Cauchy's inequality yield
	\begin{align}
		\sup_{0\le t\le T}\|v(t)-u_0\|_{L^2}
		&\le T\|v_t\|_{L^\infty(0,T;L^2)}
		\le MT,
		\notag\\
		\sup_{0\le t\le T}\|v(t)-u_0\|_{H^1}
		&\le T^{1/2}\|v_t\|_{L^2(0,T;H^1)}
		\le MT^{1/2}.
		\label{app:fixed-v-small-time}
	\end{align}
	Similarly, since $\psi(0)=\phi_{0m}$,
	\begin{equation}\label{app:fixed-psi-small-time}
		\sup_{0\le t\le T}
		\|\psi(t)-\phi_{0m}\|_{X_m}
		\le T\|\psi_t\|_{L^\infty(0,T;X_m)}
		\le MT.
	\end{equation}

\textbf{Step 2: The transport equation.}
Let $\rho=\rho[v]$ be the solution of
\[
\rho_t+\operatorname{div}(\rho v)=0,
\qquad
\rho(0)=\rho_0.
\]
For $(v,\psi)\in\mathfrak B_{M,T}$, H\"older's inequality in time and
$W^{2,q}\hookrightarrow W^{1,\infty}$ give
\begin{equation}\label{app:fixed-velocity-small}
	\begin{aligned}
		&	\int_0^T\|v(s)\|_{W^{2,q}}\,\mathrm ds
	\le
		T^{1/2}\|v\|_{L^2(0,T;W^{2,q})}
		\le MT^{1/2},\\
		&\int_0^T
		\bigl(
		\|v(s)\|_{L^\infty}+\|\nabla v(s)\|_{L^\infty}
		+\|\nabla^2v(s)\|_{L^q}
		\bigr)\,\mathrm ds
		\le CMT^{1/2}.
	\end{aligned}
\end{equation}
Lemma~\ref{app:transport-lemma} therefore gives
\begin{align}
	\sup_{0\le s\le T}\|\rho(s)\|_{W^{1,q}}
	&\le
	\|\rho_0\|_{W^{1,q}}
	\exp(CMT^{1/2}),
	\label{app:fixed-rho-bound}\\
	\rho(x,t)
	&\ge
	\underline\rho\exp(-CMT^{1/2}).
	\label{app:fixed-rho-lower}
\end{align}
Hence, after taking $CMT^{1/2}\le\log2$, one derives
$
\rho(x,t)\ge\frac12\underline\rho.
$

We next prove that $\rho(t)\to\rho_0$ in $W^{1,q}$ uniformly over
$\mathfrak B_{M,T}$.  Let $X(s;t,x)$ be the flow of $v$ and set the backward flow map $Y(t,x):=X(0;t,x)$. The characteristic formula is therefore
\begin{equation}\label{app:rho-flow-representation}
	\rho(t,x)=\rho_0(Y(t,x))A(t,x),
\end{equation}
where
\[
A(t,x):=
\exp\left(
-\int_0^t
\operatorname{div}v(s,X(s;t,x))\,\mathrm ds
\right).
\]
By \eqref{app:fixed-velocity-small} and the flow equation,
\begin{align}
	\|Y(t)-\operatorname{Id}\|_{L^\infty}
	&\le
	\int_0^t\|v(s)\|_{L^\infty}\,\mathrm ds
	\le CMT^{1/2},
	\label{app:flow-close-identity}\\
	\|D Y(t)-I\|_{L^\infty}
	&\le
	\exp\left(
	\int_0^t\|\nabla v(s)\|_{L^\infty}\,\mathrm ds
	\right)-1
	\le C_M T^{1/2}.
	\label{app:flow-gradient-close}
\end{align}
Here $DY$ is the spatial Jacobian matrix of $Y$.
After reducing $T$, the maps $Y(t,\cdot)$ and $Y(t,\cdot)^{-1}$ are
uniformly Lipschitz, and their Jacobians are bounded above and below by
positive constants independent of $(v,\psi)$.

Besides, the definition of $A$ and the preceding flow estimates imply
\begin{align}
	\|A(t)-1\|_{L^\infty}
	&\le C_M T^{1/2},
	\label{app:A-close-one}\\
	\|\nabla A(t)\|_{L^q}
	&\le
	C\int_0^t\|\nabla\operatorname{div}v(s)\|_{L^q}\,\mathrm ds
	\le C_M T^{1/2}.
	\label{app:A-gradient-small}
\end{align}
Indeed, the chain rule gives
\[
\nabla A(t,x)
=-A(t,x)\int_0^t
D_xX(s;t,x)^{\mathsf T}
\nabla\operatorname{div}v(s,X(s;t,x))\,\mathrm ds.
\]
Using the uniform bounds for $A$, $D_xX(s;t,\cdot)$,
and the Jacobian determinants of the inverse flow maps,
the change of variables $y=X(s;t,x)$ yields
\eqref{app:A-gradient-small}.

We now estimate the difference in \eqref{app:rho-flow-representation}.
By the characteristic formula,
\begin{align}
	\|\rho(t)-\rho_0\|_{L^q}
	\le
	\|A(t)\|_{L^\infty}
	\|\rho_0\circ Y(t)-\rho_0\|_{L^q}
	+\|\rho_0\|_{L^q}\|A(t)-1\|_{L^\infty}\le C_M T^{1/2}.
	\label{app:rho-Lq-time-modulus}
\end{align}
Here we used $\rho_0\in W^{1,q}$ and the standard composition
estimate along the flow, together with
\eqref{app:A-close-one}. Then,
after differentiating \eqref{app:rho-flow-representation}, one has
\begin{align}
	\nabla\rho(t)-\nabla\rho_0
	=
	A D Y^{\mathsf T}
	\bigl[
	(\nabla\rho_0)\circ Y-\nabla\rho_0
	\bigr] +
	\bigl(A D Y^{\mathsf T}-I\bigr)\nabla\rho_0
	+(\rho_0\circ Y)\nabla A.
	\label{app:rho-gradient-difference}
\end{align}
Since $\nabla\rho_0\in L^q$, approximation by continuous
vector fields and the uniform Jacobian bounds yield a modulus
$\omega_{\rho_0,q}$, independent of the particular flow, such that
\begin{equation}\label{app:composition-modulus}
	\|(\nabla\rho_0)\circ Y-\nabla\rho_0\|_{L^q}
	\le
	\omega_{\rho_0,q}
	\bigl(\|Y-\operatorname{Id}\|_{L^\infty}\bigr),
	\qquad
	\omega_{\rho_0,q}(r)\rightarrow0
	\quad\text{as }r\rightarrow0.
\end{equation}
Indeed, choose $g_\varepsilon\in C(\overline\Omega;\mathbb R^3)$
with $\|\nabla\rho_0-g_\varepsilon\|_q<\varepsilon$.
Change of variables and the uniform inverse-Jacobian bound give
\[
 \|(\nabla\rho_0)\circ Y-\nabla\rho_0\|_q
 \le C\varepsilon+|\Omega|^{1/q}
 \sup_{\substack{a,b\in\overline\Omega\\|a-b|\le r}}
 |g_\varepsilon(a)-g_\varepsilon(b)|,
 \qquad r=\|Y-\operatorname{Id}\|_\infty.
\]
First letting $r\to0$ and then $\varepsilon\to0$ proves
\eqref{app:composition-modulus} uniformly over the trial ball.
Combining
\eqref{app:flow-close-identity}-\eqref{app:A-gradient-small},
\eqref{app:rho-gradient-difference}, and
\eqref{app:composition-modulus}, we obtain
\[
\sup_{0\le t\le T}
\|\nabla\rho(t)-\nabla\rho_0\|_{L^q}
\le
C_M T^{1/2}
+
C\omega_{\rho_0,q}(CMT^{1/2}).
\]
Together with \eqref{app:rho-Lq-time-modulus},
there exists a function $\omega_M^\rho:[0,1]\to[0,\infty)$ such that
$
\omega_M^\rho(T)\rightarrow0$ as $T\rightarrow0,$
and
\begin{equation}\label{app:transport-uniform-modulus}
	\sup_{(v,\psi)\in\mathfrak B_{M,T}}
	\sup_{0\le t\le T}
	\|\rho[v](t)-\rho_0\|_{W^{1,q}}
	\le\omega_M^\rho(T).
\end{equation}

\textbf{Step 3: Finite-dimensional phase system.}
Let $\rho=\rho[v]$ be the density obtained above.  After reducing $T$,
if necessary, we have
$
0<\rho_*:=\frac12\underline\rho
\le \rho(x,t)\le\rho^*:=2\overline\rho$
in $\Omega\times[0,T]$.
We seek $\phi,\mu:[0,T]\to X_m$ satisfying
\begin{align}
	&(\rho\phi_t,\chi)+(\rho v\cdot\nabla\phi,\chi)
	+(\nabla\mu,\nabla\chi)=0,
	&&\chi\in X_m,
	\label{app:frozen-phase}\\
	&(\rho\mu,\zeta)
	=(\nabla\phi,\nabla\zeta)
	+(\rho F(\psi)+R_m,\zeta),
	&&\zeta\in X_m,
	\label{app:frozen-chemical}\\
	&\phi(0)=\phi_{0m}.
	\label{app:frozen-phase-initial}
\end{align}
We write
$
\phi=\sum_{j=1}^m a_j e_j$,
and
	$\mu=\sum_{j=1}^m b_j e_j.
	$
	Then define the matrices $\mathbf M=\mathbf M_m$, $\mathbf S=\mathbf S_m$,
	$\mathbf C=\mathbf C_m$, and the vector $\mathbf N=\mathbf N_m$ by
	\begin{align*}
		(\mathbf M)_{ij}&=(\rho e_j,e_i),
		&
		(\mathbf S)_{ij}&=(\nabla e_j,\nabla e_i),\\
		(\mathbf C)_{ij}&=(\rho v\cdot\nabla e_j,e_i),
		&
		(\mathbf N)_i&=(\rho F(\psi)+R_m,e_i).
	\end{align*}
	Therefore \eqref{app:frozen-phase} and \eqref{app:frozen-chemical} become
	\begin{equation}\label{app:phase-matrix}
		\mathbf M\dot a+\mathbf C a+\mathbf S b=0,
		\qquad
		\mathbf M b=\mathbf S a+\mathbf N.
	\end{equation}
Since $\{e_j\}_{j=1}^m$ is orthonormal in $L^2$, for every
$z\in\mathbb R^m$,
\begin{equation}\label{app:phase-mass-coercivity}
	\rho_*|z|^2
		\le z^{\mathsf T}\mathbf M(t)z
	\le \rho^*|z|^2,
	\qquad
		\|\mathbf M(t)^{-1}\| \le\rho_*^{-1}.
\end{equation}
Noticing that $R_m$ is independent of time, the coefficients satisfy
\begin{align*}
	\dot{\mathbf M}_{ij}
	=(\rho_t e_j,e_i), \quad
	\dot{\mathbf C}_{ij}
	=(\rho_t v\cdot\nabla e_j,e_i)
	+(\rho v_t\cdot\nabla e_j,e_i),\quad
	\dot{\mathbf N}_i
	=(\rho_tF(\psi)+\rho F'(\psi)\psi_t,e_i).
\end{align*}
The regularity of $\rho$, $v$, and $\psi$, together with
the smoothness of the fixed basis functions, therefore gives
\begin{equation}\label{app:phase-coefficients}
	\mathbf M,\mathbf C
	\in W^{1,\infty}(0,T;\mathbb R^{m\times m}),
	\qquad
	\mathbf N\in W^{1,\infty}(0,T;\mathbb R^m).
\end{equation}
Eliminating $b$ from \eqref{app:phase-matrix}, we obtain
\begin{equation}\label{app:phase-explicit-ODE-revised}
	\dot a=-A_m a-G_m,
\end{equation}
where
$
A_m=\mathbf M^{-1}\mathbf C
+\mathbf M^{-1}\mathbf S\mathbf M^{-1}\mathbf S$,
and
$G_m=\mathbf M^{-1}\mathbf S\mathbf M^{-1}\mathbf N.
$
The uniform positive definiteness of $\mathbf M$ and
\eqref{app:phase-coefficients} imply
\[
\mathbf M^{-1},A_m,G_m\in W^{1,\infty}(0,T).
\]
Standard linear ODE theory therefore yields a unique solution
$a\in W^{2,\infty}(0,T;\mathbb R^m)$ with the prescribed
initial value. Moreover,
\begin{equation}\label{app:b-regularity}
b=\mathbf M^{-1}(\mathbf S a+\mathbf N)
\in W^{1,\infty}(0,T;\mathbb R^m).
\end{equation}
The initial chemical compatibility ensures that
$b(0)$ agrees with the coefficient vector of $\mu_{0m}$.
Thus \eqref{app:frozen-phase}-\eqref{app:frozen-phase-initial}
has a unique solution satisfying
\begin{equation}\label{app:fixed-phase-regularity}
	\phi\in W^{2,\infty}(0,T;X_m),
	\qquad
	\mu\in W^{1,\infty}(0,T;X_m).
\end{equation}

At $t=0$, \eqref{app:m-initial-chemical} and the invertibility
of $\mathbf M(0)$ give $\mu(0)=\mu_{0m}$. The phase equation
then yields $(\rho_0\phi_t(0),\chi)
=(\rho_0(h_{0m}-u_0\cdot\nabla\phi_{0m}),\chi)$ for $\chi\in X_m$.
Hence

\begin{equation}\label{app:discrete-phit-initial}
	\phi_t(0)
	=
	\mathcal P_m^{\rho_0}
	\bigl(
	h_{0m}-u_0\cdot\nabla\phi_{0m}
	\bigr).
\end{equation}
Consequently, Lemma \ref{app:weighted-projection-lemma} gives
\begin{align*}
	\|\phi_t(0)\|_{H^1}
	\le
	C\left(
	\|h_{0m}\|_{H^1}
	+\|u_0\|_{H^2}\|\phi_{0m}\|_{H^3}
	\right),
\end{align*}
and hence
\begin{equation}\label{app:discrete-phit-uniform}
	\sup_m\|\phi_t(0)\|_{H^1}<\infty.
\end{equation}

The coefficient bounds and the ODE give
$\|a\|_{W^{2,\infty}}+\|b\|_{W^{1,\infty}}\le C_{m,M}$,
uniformly over the trial ball for small $T$.
Integration in time and finite-dimensional norm equivalence yield

\begin{equation}\label{app:fixed-output-phase-modulus}
	\sup_{0\le t\le T}\left(
	\|\phi(t)-\phi_{0m}\|_{X_m}
	+\|\mu(t)-\mu_{0m}\|_{X_m}
	+\|\phi_t(t)-\phi_t(0)\|_{X_m}
	\right)
	\le C_{m,M}T.
\end{equation}
Since $\phi(0)$ and $\phi_t(0)$ are determined solely by the
initial data, it follows that
\begin{equation}\label{app:discrete-phase-self-map}
	\|\phi\|_{W^{1,\infty}(0,T;X_m)}
	\le C_{0,m}+C_{m,M}T,
	\qquad
	\|\mu\|_{W^{1,\infty}(0,T;X_m)}
	\le C_{m,M},
\end{equation}
where $C_{0,m}$ is independent of $M$ and $T$.

\textbf{Step 4: Velocity.}
We now construct the velocity output. For a given
$(v,\psi)\in\mathfrak B_{M,T}$, we have already determined the density $\rho=\rho[v]$
and the phase variables $(\phi,\mu)$.
Set
\[
\mathcal A^{\rm ch}[\rho,\phi,\mu]
:=\rho\mu-\rho F(\phi)-R_m
\]
and
\[
f[v,\psi]
:=-\rho(v\cdot\nabla)v-\nabla\rho^\gamma
+\mathcal A^{\rm ch}[\rho,\phi,\mu]\nabla\phi.
\]
We seek $u$ as the solution of the linear problem
\begin{equation}\label{app:fixed-full-velocity}
	\begin{cases}
		\rho u_t-\mathcal Lu=f[v,\psi]
		&\text{in }\Omega\times(0,T),\\
		u=0&\text{on }\partial\Omega\times(0,T),\\
		u(0)=u_0&\text{in }\Omega.
	\end{cases}
\end{equation}
To this end, we verify the hypotheses of
Lemma~\ref{app:lame-parabolic-lemma}.

First, the transport estimates ensure that $\rho$ has the
required regularity and is bounded away from zero.
The bounds for $v$, $\rho$, and the phase outputs, together
with finite-dimensional norm equivalence, give
\[
\|f\|_{L^\infty(0,T;L^q)}\le C_{m,M}.
\]
In particular, since $q>3$ and $\Omega$ is bounded,
$f\in L^\infty(0,T;L^2)$ and
\begin{equation}\label{app:fixed-f-Lq-small}
	\|f\|_{L^2(0,T;L^q)}
	\le C_{m,M}T^{1/2}.
\end{equation}
To bound $f_t$ in $H^{-1}$, the only convection term requiring
integration by parts is $\rho(v\cdot\nabla)v_t$:
\[
 \langle\rho(v\cdot\nabla)v_t,z\rangle
 =-\int\rho v_t\cdot(v\cdot\nabla z)
  -\int\operatorname{div}(\rho v)v_t\cdot z,
 \qquad z\in H_0^1.
\]
The other convection terms and the pressure derivative are
bounded by $C_{m,M}(\|\rho_t\|_q+\|v_t\|_2)$.
Since $R_{m,t}=0$,
$\mathcal A_t^{\rm ch}=\rho_t(\mu-F(\phi))
+\rho(\mu_t-F'(\phi)\phi_t)$.
The fixed-dimensional phase bounds therefore give
\[
 \|f_t\|_{H^{-1}}
 \le C_{m,M}(\|\rho_t\|_q+\|v_t\|_2
                  +\|\phi_t\|_{X_m}+\|\mu_t\|_{X_m}).
\]
The right-hand side is uniformly bounded on the trial ball, so

\begin{equation}\label{app:fixed-ft-small}
	\|f_t\|_{L^2(0,T;H^{-1})}
	\le C_{m,M}T^{1/2}.
\end{equation}
The continuity of $\rho$ in $W^{1,q}$, of $v$ in $H^1$
(with bounded $H^2$ norm), and of the phase outputs gives
$f\in C([0,T];L^2)$. Expanding differences of the factors,
using the positive density bound for the pressure term, yields
\[
 \|f(t)-f(0)\|_2\le C_{m,M}\bigl(
 \|\rho(t)-\rho_0\|_{W^{1,q}}+\|v(t)-u_0\|_{H^1}
 +\|\phi(t)-\phi_{0m}\|_{X_m}+\|\mu(t)-\mu_{0m}\|_{X_m}\bigr).
\]
Thus, with $\omega_{m,M}^0(T)
=C_{m,M}(\omega_M^\rho(T)+T^{1/2})$, we have

\begin{equation}\label{app:fixed-f-continuity}
	\sup_{0\le t\le T}\|f(t)-f(0)\|_{L^2}
	\le\omega_{m,M}^0(T),
	\qquad
	\omega_{m,M}^0(T)\rightarrow0
	\quad\text{as }T\rightarrow0.
\end{equation}
Combining this with \eqref{app:fixed-ft-small} and
\eqref{app:fixed-f-Lq-small}, there exists a modulus $\omega_{m,M}(T)$ such that
\begin{equation}\label{app:fixed-forcing-small-time}
	\|f_t\|_{L^2(0,T;H^{-1})}
	+\|f-f(0)\|_{L^\infty(0,T;L^2)}
	+\|f\|_{L^2(0,T;L^q)}
	\le\omega_{m,M}(T),
	\quad
	\omega_{m,M}(T)\rightarrow0
	\quad\text{as }T\rightarrow0.
\end{equation}

 Finally, the initial chemical compatibility gives
 \[
 \mathcal A^{\rm ch}[\rho_0,\phi_{0m},\mu_{0m}]
 =-\Delta\phi_{0m}.
 \]
 Thus the initial acceleration prescribed by
 \eqref{app:fixed-full-velocity} is
 \[
 u_t(0) =\rho_0^{-1}\bigl(\mathcal Lu_0+f(0)\bigr)
 =\rho_0^{-1/2}g_{0m}-u_0\cdot\nabla u_0
 \in L^2.
 \]
 Consequently, Lemma~\ref{app:lame-parabolic-lemma} now yields a unique
 solution $u$ satisfying
 \[
 u\in L^\infty(0,T;H^2\cap H_0^1)
 \cap L^2(0,T;W^{2,q}),
 \qquad
 u_t\in L^\infty(0,T;L^2)
 \cap L^2(0,T;H_0^1).
 \]
 This defines the velocity output associated with $(v,\psi)$.

 \textbf{Step 5: Self-mapping and contraction.}
 We complete the construction by applying the Banach fixed-point theorem.
 Throughout this step, $m$ is fixed and $0<T\le1$.

 \medskip
 \noindent\textit{Definition of the map and the metric.}
 For $(v,\psi)\in\mathfrak B_{M,T}$, the preceding steps determine
 successively the density $\rho=\rho[v]$, the phase variables
 $(\phi,\mu)$, and the velocity $u$. Define
 \[
 \mathcal T_m(v,\psi):=(u,\phi).
 \]
 At this stage, $\mathcal T_m$ takes values in $\mathcal X_{m,T}$;
 the self-mapping property will be established below. Equip
 $\mathfrak B_{M,T}$ with the metric
 \[
 \begin{aligned}
 	d_T\bigl((v_1,\psi_1),(v_2,\psi_2)\bigr)
 	:=\|v_1-v_2\|_{L^\infty(0,T;L^2)}
 	+\|v_1-v_2\|_{L^2(0,T;H^1)}
 	+\|\psi_1-\psi_2\|_{C([0,T];X_m)}.
 \end{aligned}
 \]
 It should be mentioned that the strong norm defining the ball supplies the regularity needed for the construction, whereas this weaker metric is used for contraction.

 \medskip
 \noindent\textit{Completeness.}
 The metric space $(\mathfrak B_{M,T},d_T)$ is complete.
 Indeed, a $d_T$-Cauchy sequence $(v_n,\psi_n)$ converges to
 $(v,\psi)$ in
 \[
 \bigl(C([0,T];L^2)\cap L^2(0,T;H_0^1)\bigr)
 \times C([0,T];X_m),
 \]
 where we use the continuous representatives of $v_n$.
 The uniform $\mathcal X_{m,T}$ bounds, weak and weak-* compactness,
 and identification of time derivatives in distributions imply
 that $(v,\psi)\in\mathcal X_{m,T}$. By weak lower semicontinuity,
 \[
 \|(v,\psi)\|_{\mathcal X_{m,T}}\le M.
 \]
 The initial conditions pass to the limit by uniform convergence
 in time. Hence $(v,\psi)\in\mathfrak B_{M,T}$, proving completeness.

 \medskip
 \noindent\textit{Self-mapping.}
 We first separate the contribution of the initial data from
 the terms that become small as $T\rightarrow0$.
 If $CMT^{1/2}\le\log2$, the transport bounds give
 $
 \tfrac12\underline\rho\le\rho(x,t)\le2\overline\rho$.
 In particular, the density bounds depend only on the initial
 bounds. To estimate $u_t$, set $w=u_t$ and
 \[
 E_u(t):=\int\rho|w|^2\,\mathrm dx.
 \]
 Differentiating the velocity equation in time gives
 \[
 \rho w_t-\mathcal Lw=f_t-\rho_t w.
 \]
 Testing by $w$ yields
 \[
 \frac12E_u'(t)+(-\mathcal Lw,w)
 =\langle f_t,w\rangle
 -\frac12\int\rho_t|w|^2\,\mathrm dx.
 \]
Sobolev interpolation, Young's inequality, and the positivity
 of $\rho$ imply
 \[
 E_u'(t)+c\|\nabla w\|_2^2
 \le C\|\rho_t\|_q^{2q/(2q-3)}E_u(t)
      +C\|f_t\|_{H^{-1}}^2.
 \]
 Since $\|\rho_t\|_{L^\infty L^q}\le C_M$,
 \eqref{app:fixed-ft-small} and Gronwall's inequality give
 \[
 \sup_{s\le T}E_u(s)+c\int_0^T\|\nabla u_t\|_2^2\,\mathrm dt
 \le C\bigl(E_u(0)+C_{m,M}T\bigr)e^{C_MT}.
 \]
 Here $C,c$ are independent of $M$, and
 \[
 E_u(0)
 =\int \rho_0
 \left|\rho_0^{-1/2}g_{0m}-u_0\cdot\nabla u_0\right|^2
 \,\mathrm dx
 \]
 depends only on the fixed initial data and $m$.
  Thus the norms of
 $u_t$ are bounded by an initial-data constant plus a quantity
 tending to zero for fixed $m,M$. Applying the elliptic
 estimates to $\mathcal Lu=\rho u_t-f$ yields
 \[
 \begin{aligned}
 	\|u\|_{L^\infty H^2}
 	&\le C\bigl(\|u_t\|_{L^\infty L^2}
 	+\|f\|_{L^\infty L^2}\bigr),\\
 	\|u\|_{L^2W^{2,q}}
 	&\le C\bigl(\|u_t\|_{L^2H^1}
 	+\|f\|_{L^2L^q}\bigr).
 \end{aligned}
 \]
 Then using  \eqref{app:discrete-phase-self-map} and
 \eqref{app:fixed-forcing-small-time}, one has
 \begin{equation}\label{app:fixed-self-map-estimate}
 	\|\mathcal T_m(v,\psi)\|_{\mathcal X_{m,T}}
 	\le C_{0,m}+\widetilde\omega_{m,M}(T),
 	\qquad
 	\widetilde\omega_{m,M}(T)\rightarrow0
 	\quad\text{as }T\rightarrow0.
 \end{equation}
 The constant $C_{0,m}$ depends only on $m$, the fixed initial
 data, and the fixed equation parameters; in particular, it
 does not depend on $M$. Choose $M>2C_{0,m}$ large enough that
 the ball is nonempty, and then choose $T$ sufficiently small
 that $CMT^{1/2}\le\log2$ and
 $\widetilde\omega_{m,M}(T)\le M/2$.
 The initial values of the outputs were verified in the preceding
 steps. Consequently,
 \[
 \mathcal T_m(\mathfrak B_{M,T})\subset\mathfrak B_{M,T}.
 \]
 The radius $M$ is fixed from now on; $T$ may be reduced further.

 \medskip
 \noindent\textit{Difference estimates and contraction.}
 Take $(v_k,\psi_k)\in\mathfrak B_{M,T}$, and denote the corresponding
 outputs by $(\rho_k,\phi_k,\mu_k,u_k)$, $k=1,2$. Set
 \[
 V:=v_1-v_2,\quad r:=\rho_1-\rho_2,\quad
 \Phi:=\phi_1-\phi_2,\quad H:=\mu_1-\mu_2,\quad U:=u_1-u_2.
 \]

 The transport difference estimate \eqref{app:transport-difference} gives
 \begin{equation}\label{app:fixed-density-difference}
 	\|r\|_{L^\infty(0,T;L^2)}
 	\le C_M T^{1/2}\|V\|_{L^2(0,T;H^1)}.
 \end{equation}
 For the phase variables, the inverse-matrix identity and
 the local Lipschitz continuity of $F$ imply
 \begin{align}
 	\|\mathbf M_1-\mathbf M_2\|
 	+\|\mathbf M_1^{-1}-\mathbf M_2^{-1}\|
 	&\le C_{m,M}\|r\|_{L^2},\notag\\
 	\|\mathbf C_1-\mathbf C_2\|
 	&\le C_{m,M}(\|r\|_{L^2}+\|V\|_{L^2}),\notag\\
 	|\mathbf N_1-\mathbf N_2|
 	&\le C_{m,M}(\|r\|_{L^2}+\|\psi_1-\psi_2\|_{X_m}).
 	\label{app:phase-matrix-differences}
 \end{align}
 Subtracting the coefficient ODEs and using $a_1(0)=a_2(0)$,
 Gronwall's inequality yields
 \[
 \sup_{0\le t\le T}|a_1(t)-a_2(t)|
 \le C_{m,M}\left(
 T\|r\|_{L^\infty(0,T;L^2)}
 +T^{1/2}\|V\|_{L^2(0,T;L^2)}
 +T\|\psi_1-\psi_2\|_{C([0,T];X_m)}\right).
 \]
 Thus, by finite-dimensional norm equivalence and $T\le1$,
 \begin{equation}\label{app:fixed-phase-difference}
 	\|\Phi\|_{C([0,T];X_m)}
 	\le C_{m,M}T^{1/2}
 	\left(\|V\|_{L^2(0,T;H^1)}
 	+\|\psi_1-\psi_2\|_{C([0,T];X_m)}\right).
 \end{equation}
 The algebraic formula \eqref{app:b-regularity} also gives
 \begin{equation}\label{app:fixed-mu-difference}
 	\|H\|_{C([0,T];X_m)}
 	\le C_{m,M}\left(
 	\|r\|_{L^\infty(0,T;L^2)}
 	+\|\Phi\|_{C([0,T];X_m)}
 	+\|\psi_1-\psi_2\|_{C([0,T];X_m)}\right).
 \end{equation}

We finally estimate the velocity difference.
Subtracting the two velocity equations and expanding the
convection terms, we obtain
\begin{align*}
	\rho_1U_t-\mathcal LU
	={}&-r u_{2,t}
	-r(v_1\cdot\nabla)v_1
	-\rho_2(V\cdot\nabla)v_1
	-\rho_2(v_2\cdot\nabla)V\\
	&-\nabla(\rho_1^\gamma-\rho_2^\gamma)
	+\mathcal A^{\rm ch}[\rho_1,\phi_1,\mu_1]\nabla\phi_1
	-\mathcal A^{\rm ch}[\rho_2,\phi_2,\mu_2]\nabla\phi_2,
\end{align*}
with $U(0)=0$ and $U|_{\partial\Omega}=0$.

Write $\mathcal A_k^{\rm ch}=\mathcal A^{\rm ch}[\rho_k,\phi_k,\mu_k]$.
The same $R_m$ appears in both chemical coefficients, and
$\mathcal A_1^{\rm ch}-\mathcal A_2^{\rm ch}
=r(\mu_1-F(\phi_1))+\rho_2H-\rho_2(F(\phi_1)-F(\phi_2))$.
The ball bounds and finite-dimensional norm equivalence therefore give

\begin{equation}\label{app:fixed-capillary-difference}
	\begin{aligned}
		\|\mathcal A^{\rm ch}[\rho_1,\phi_1,\mu_1]\nabla\phi_1
		-\mathcal A^{\rm ch}[\rho_2,\phi_2,\mu_2]\nabla\phi_2\|_{L^2}
		\le C_{m,M}
		\bigl(\|r\|_{L^2}+\|H\|_{X_m}+\|\Phi\|_{X_m}\bigr).
	\end{aligned}
\end{equation}
By \eqref{app:fixed-density-difference},
\eqref{app:fixed-phase-difference}, and
\eqref{app:fixed-mu-difference}, it follows that
\begin{equation}\label{app:fixed-capillary-difference-time}
	\begin{aligned}
		&\int_0^T
		\|\mathcal A^{\rm ch}[\rho_1,\phi_1,\mu_1]\nabla\phi_1
		-\mathcal A^{\rm ch}[\rho_2,\phi_2,\mu_2]\nabla\phi_2\|_{L^2}^2
		\,\mathrm dt\\
		&\qquad\le C_{m,M}T\left(
		\|V\|_{L^2(0,T;H^1)}^2
		+\|\psi_1-\psi_2\|_{C([0,T];X_m)}^2
		\right).
	\end{aligned}
\end{equation}
Test the velocity difference equation by $U$. We first notice that
\[
 \int\rho_2(v_2\cdot\nabla)V\cdot U
 =-\int\rho_2V\cdot(v_2\cdot\nabla U)
  -\int\operatorname{div}(\rho_2v_2)V\cdot U.
\]
Since $q>3$, $\operatorname{div}(\rho_2v_2)$ is bounded in $L^3$.
Besides, Sobolev and Young inequalities bound the convection and pressure
terms by $\varepsilon\|\nabla U\|_2^2
+C_{m,M,\varepsilon}(\|V\|_2^2+\|r\|_2^2)$.
The remaining density terms satisfy
\[
 \left|\int r u_{2,t}\cdot U\right|
 \le\varepsilon\|\nabla U\|_2^2
    +C_\varepsilon\|u_{2,t}\|_{H^1}^2\|r\|_2^2,
 \qquad
 \left|\int\rho_{1,t}|U|^2\right|
 \le\varepsilon\|\nabla U\|_2^2+C_{m,M,\varepsilon}\|U\|_2^2.
\]
Finally, the capillary pairing is bounded by
$\varepsilon\|\nabla U\|_2^2
+C_\varepsilon\|\mathcal A_1^{\rm ch}\nabla\phi_1
-\mathcal A_2^{\rm ch}\nabla\phi_2\|_2^2$.

Therefore, after choosing $\varepsilon$ sufficiently small and using the
coercivity of $-\mathcal L$, we obtain
\begin{equation}\label{app:fixed-velocity-differential}
	\begin{aligned}
		\frac{\mathrm d}{\mathrm dt}
		\int \rho_1|U|^2\,\mathrm dx
		+c\|\nabla U\|_{L^2}^2
		\le&
		C_{m,M}\|U\|_{L^2}^2
		+C_{m,M}(1+\|u_{2,t}\|_{H^1}^2)\|r\|_{L^2}^2
		+C_{m,M}\|V\|_{L^2}^2\\
		&+C\|\mathcal A^{\rm ch}[\rho_1,\phi_1,\mu_1]\nabla\phi_1
		-\mathcal A^{\rm ch}[\rho_2,\phi_2,\mu_2]\nabla\phi_2\|_{L^2}^2.
	\end{aligned}
\end{equation}
The output bounds and \eqref{app:fixed-density-difference} imply
\[
 \int_0^T(1+\|u_{2,t}\|_{H^1}^2)\|r\|_2^2\,\mathrm dt
 \le C_{m,M}T\|V\|_{L^2H^1}^2,
 \qquad
 \int_0^T\|V\|_2^2\,\mathrm dt
 \le T\|V\|_{L^\infty L^2}^2.
\]
Combining these with \eqref{app:fixed-capillary-difference-time},
$U(0)=0$, and the lower density bound, Gronwall's inequality yields

\begin{equation}\label{app:fixed-velocity-difference}
	\begin{aligned}
		&\|U\|_{L^\infty(0,T;L^2)}^2+\|U\|_{L^2(0,T;H^1)}^2\\
        &\quad\le C_{m,M}T\left(
        \|V\|_{L^\infty(0,T;L^2)}^2+\|V\|_{L^2(0,T;H^1)}^2
        +\|\psi_1-\psi_2\|_{C([0,T];X_m)}^2\right).
	\end{aligned}
\end{equation}
 Together with \eqref{app:fixed-phase-difference}, this proves
 \[
 d_T\bigl(\mathcal T_m(v_1,\psi_1),\mathcal T_m(v_2,\psi_2)\bigr)
 \le C_{m,M}T^{1/2}
 d_T\bigl((v_1,\psi_1),(v_2,\psi_2)\bigr).
 \]
 Reducing $T$ further so that $C_{m,M}T^{1/2}<1$, we conclude
 that $\mathcal T_m$ is a contraction on the complete metric
 space $(\mathfrak B_{M,T},d_T)$.

 \medskip
 \noindent\textit{Existence and uniqueness.}
 The Banach fixed-point theorem yields a unique fixed point
 $(u,\phi)\in\mathfrak B_{M,T}$. At this fixed point,
 $v=u$ and $\psi=\phi$, so the constructed quadruple
 $(\rho,u,\phi,\mu)$ solves the semi-discrete system with the
 prescribed initial and boundary conditions. The density remains
 positive by \eqref{app:fixed-rho-lower}, and the phase construction
 gives
 \[
 \phi\in W^{2,\infty}(0,T;X_m),
 \qquad \mu\in W^{1,\infty}(0,T;X_m).
 \]
 To prove uniqueness in the stated regularity class, let
 $(\rho_i,u_i,\phi_i,\mu_i)$, $i=1,2$, be two solutions
 with the same initial data. The preceding difference estimates
 give, on a sufficiently short interval $[0,\tau]$,
 \[
 d_\tau\bigl((u_1,\phi_1),(u_2,\phi_2)\bigr)
 \le C\tau^{1/2}
 d_\tau\bigl((u_1,\phi_1),(u_2,\phi_2)\bigr),
 \]
 where $C$ depends on common bounds for the two solutions.
 Choosing $\tau$ so that $C\tau^{1/2}<1$ yields
 $u_1=u_2$ and $\phi_1=\phi_2$.
 The transport equation and the discrete chemical relation
 then give $\rho_1=\rho_2$ and $\mu_1=\mu_2$.
 Repeating the argument from any time at which the solutions
 agree proves uniqueness on their common interval of existence.

The local construction and uniqueness therefore determine
a unique maximal solution for each fixed $m$.
We denote its maximal existence time by $T_m\in(0,\infty]$
and its interval of existence by $[0,T_m)$.

\subsection{Uniform estimates and a common lifespan in $m$}
\label{subsec:positive-step-uniform}

For the semi-discrete solution, let $\Phi_m$ and $\mathcal D_m$
be the quantities defined in \eqref{def_Phi} and \eqref{def_D},
evaluated at $(\rho_m,u_m,\phi_m,\mu_m,R_m)$.
Let $\mathcal M_{0m}$ be the initial size defined by
\eqref{def_initial_M}, with $R$, $\phi_0$, $\mu_0$, $g_0$,
and $h_0$ replaced by $R_m$, $\phi_{0m}$, $\mu_{0m}$,
$g_{0m}$, and $h_{0m}$, respectively, where $g_{0m}$ and
$h_{0m}$ are given by \eqref{app:h0m-g0m}. The convergence of the
initial approximations and \eqref{app:discrete-phit-uniform} give
\begin{equation}\label{app:uniform-initial-size-m}
 B_*:=\sup_m\mathcal M_{0m}<\infty,\qquad
 \sup_m\|\phi_{m,t}(0)\|_{H^1}<\infty,\qquad \int R_m=0.
\end{equation}
To record the additional positive-density regularity, define
\begin{equation}\label{app:Y-definition}
\begin{aligned}
 \mathfrak Y_m(t):={}&\Phi_m(t)
 +\sup_{s\le t}\bigl(\|u_m(s)\|_{H^2}^2+\|u_{m,t}(s)\|_2^2
                         +\|\phi_{m,t}(s)\|_{H^1}^2\bigr)\\
 &+\int_0^t\bigl(\|u_{m,t}\|_{H^1}^2+\|u_m\|_{W^{2,q}}^2
              +\|\mu_{m,t}\|_{H^1}^2+\|\mu_m\|_{H^3}^2\bigr)\,\mathrm ds.
\end{aligned}
\end{equation}
In the following lemma we suppress $m$ and write $T^*$ for the
current semi-discrete lifespan. We also use the viscous bilinear form
$\mathcal A(v,w)=\lambda(\nabla v,\nabla w)
        +(\lambda+\eta)(\operatorname{div}v,\operatorname{div}w)$.

\begin{lemma}[Semidiscrete estimate at positive density]
	\label{app:positive-density-semidiscrete-estimate}
	Let $c>0$ and $0<S<T^*$.  Suppose
	\begin{equation}\label{app:temporary-positive-density}
		\rho(x,t)\ge c,
		\qquad (x,t)\in\Omega\times[0,S].
	\end{equation}
	Then there is a nondecreasing locally bounded function $M_c$, independent
	of $m$, such that, for $0\le t\le S$,
	\begin{equation}\label{app:discrete-apriori}
		\Phi(t)+\int_0^t\mathcal D(s)\,\mathrm ds
		\le
		M_c\left(
		\mathcal M_0+\int_0^tM_c(\Phi(s))\,\mathrm ds
		\right).
	\end{equation}
	Here $M_c:[0,\infty)\to[1,\infty)$ is a continuous
	nondecreasing function independent of the phase dimension.
\end{lemma}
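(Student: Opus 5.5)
The plan is to repeat the chain of Section~\ref{sec:apriori} (Lemmas~\ref{Lemma_energy}--\ref{Lemma_mu_phi_elliptic_closed}) for the semi-discrete system \eqref{app:m-continuity}--\eqref{app:m-chemical}, in the same order. The test functions must be chosen so that every phase test function lies in $X_m$. The lower bound \eqref{app:temporary-positive-density} is used in only two places: to invoke Lemma~\ref{app:weighted-projection-lemma} with $a=\rho(t)$, and to convert the spectral identities \eqref{app:phase-spectral-identities} into pointwise elliptic bounds. Everything else is taken verbatim from Section~\ref{sec:apriori}, since the density and velocity equations are unprojected. Because $\mathcal M_0$ and $\phi_t(0)$ are uniform in $m$ by \eqref{app:uniform-initial-size-m}, the resulting $M_c$ is independent of $m$.

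\textbf{Step 1: energy and low-order estimates.} Test \eqref{app:m-momentum} by $u$, \eqref{app:m-phase} by $\chi=\mu\in X_m$, and the time derivative of \eqref{app:m-chemical} by $\zeta=\phi_t\in X_m$. The capillary force $\mathcal A^{\rm ch}\nabla\phi$ is chosen precisely so that $\int\mathcal A^{\rm ch}u\cdot\nabla\phi$ cancels the transport pairing $(\rho u\cdot\nabla\phi,\mu)$ together with the $\rho F$ and $R$ contributions. This leaves exactly the identity \eqref{total-energy} with $R=R_m$. Lemma~\ref{Lemma_phi_H1} then follows as before, using \eqref{app:discrete-constant-modes} for the weighted means.

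The velocity bounds of Lemmas~\ref{Lemma_u_H2}--\ref{Lemma_rho_W1q} go through unchanged. The only change is that the forcing is $-\Delta\phi\nabla\phi+\mathcal G\nabla\phi$. Since $\mathcal G=(I-Q_m)\mathcal A^{\rm ch}$, we have $\|\mathcal G\|_{H^1}\le C\|\mathcal A^{\rm ch}\|_{H^1}\le M(\Phi)$, so this term is bounded like the original force. In the $u_t$ estimate, the term $\int\partial_t(\mathcal G\nabla\phi)\cdot u_t$ is handled using \eqref{app:chemical-residual-time}. It is bounded by $(\|\sqrt\rho\mu_t\|_{2}+M(\Phi))\|\nabla\phi\|_\infty\|u_t\|_{2}+\|\mathcal G\|_3\|\nabla\phi_t\|_2\|u_t\|_6$. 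The factor $\|u_t\|_2$ is at most $c^{-1/2}\|\sqrt\rho u_t\|_2$; this is the first place where the lower bound $c$ enters. The term $\|\sqrt\rho\mu_t\|_2^2$ is then placed into the dissipation of the coupled Gronwall argument. This is why only the full combined inequality \eqref{app:discrete-apriori}, and not each lemma separately, is claimed.

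\textbf{Step 2: phase estimates.} In Lemmas~\ref{Lemma_rho_mu_phit}, \ref{Lemma_mu_phi_t}, \ref{Lemma_phase_gradient_modified} and \ref{Lemma_phase_high_joint}, the multipliers $\phi_t,\mu,\mu_t$ all lie in $X_m$ (and $\phi_{tt}\in X_m$ as well), so the discrete identities give the same equalities. The exceptions are the places where the original proofs substituted the pointwise equation, namely pairings such as $\int u\cdot\nabla\phi\,\Delta\phi_t$ and the uses of $\rho\phi_t=\Delta\mu-\rho u\cdot\nabla\phi$. There I would insert the weighted projection $\mathcal P_m^{\rho}$. For example, the pairing $(\rho u\cdot\nabla\phi,\zeta)$ equals $(\rho\,\mathcal P_m^\rho(u\cdot\nabla\phi),\zeta)$, and by Lemma~\ref{app:weighted-projection-lemma} $\|\mathcal P_m^\rho(u\cdot\nabla\phi)\|_{H^1}\le C_c\|u\cdot\nabla\phi\|_{H^1}$. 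The bound is uniform in $m$ because its constant depends only on $c^{-1}$ and $\|\rho\|_{W^{1,q}}\le\Phi$. The initial value $\mathcal Y(0)$ is controlled by \eqref{app:discrete-phit-initial} and \eqref{app:discrete-phit-uniform}.

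\textbf{Step 3: elliptic closure.} For the closure in Lemma~\ref{Lemma_mu_phi_elliptic_closed}, use \eqref{app:phase-spectral-identities}. Since $Q_m$ commutes with Neumann elliptic regularity and is bounded on $H^1$, we get $\|\phi\|_{H^3}\le C(\|Q_m\mathcal A^{\rm ch}\|_{H^1}+\|\phi\|_2)$ and $\|\mu\|_{H^2}\le C(\|Q_m[\rho(\phi_t+u\cdot\nabla\phi)]\|_2+\|\mu\|_2)$. From this point the estimates are identical to those in Section~\ref{sec:apriori}.

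\textbf{Main obstacle.} The hardest step will be the high-order joint estimate of Lemma~\ref{Lemma_phase_gradient_modified}. There the integration by parts in time producing $\mathcal B$ used $\rho_t=-\operatorname{div}(\rho u)$ paired with $\phi_{tt}[\mu-F(\phi)]$, and the factor $\mu-F(\phi)$ is not in $X_m$. I would first try to keep this term unprojected: the identity only needs the continuity equation, which is exact, and $\phi_{tt}\in X_m$ merely serves as a test function. If a pointwise chemical relation turns out to be needed, I would instead estimate the error $\mathcal G$ using $\mathcal G(0)=0$ and $\|\partial_t\mathcal G\|_2\le M(\Phi)(1+\|\sqrt\rho\mu_t\|_2)$. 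This error can be absorbed with constants depending on $c$, which the statement permits.
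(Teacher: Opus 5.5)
Your Steps~2 and~3 agree with the paper, and so does your first answer to the ``main obstacle'': the time integration by parts in $H_5$ uses only the exact continuity equation, with $\phi_{tt}\in X_m$ serving merely as a test function.

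\textbf{The gap is in Step~1, in the $u_t$ estimate.} You bound $\langle\partial_t(\mathcal G\nabla\phi),u_t\rangle$ through $\|\partial_t\mathcal G\|_{L^2}$. This contains $\|\rho\mu_t\|_{L^2}$, and you then propose to absorb $\|\sqrt\rho\mu_t\|_{L^2}^2$ into the phase dissipation by a coupled Gronwall argument. Adding the two differential inequalities is fine, but the circularity does not sit there.

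The chain of Section~\ref{sec:apriori} is sequential:
\begin{itemize}
\item $\int_0^t\|\nabla u_t\|_{L^2}^2$ controls $\|\nabla u(t)\|_{L^2}$ (Lemma~\ref{Lemma_nabla_u});
\item it also controls $\int_0^t\|u\|_{W^{2,q}}$, hence exponentially $\|\rho(t)\|_{W^{1,q}}$ (Lemma~\ref{Lemma_rho_W1q}), and then $\|\phi(t)\|_{H^2}$;
\item all of these enter, as a product evaluated at time $t$, the deficit in the coercivity bound \eqref{Y_lower_bound} for $\mathcal Y$;
\item that coercivity bound is where the control of $S(t):=\int_0^t\|\sqrt\rho\mu_t\|_{L^2}^2$ comes from.
\end{itemize}
With your estimate, the velocity dissipation is bounded only by $C\mathcal M_0+J+J^{1/2}S^{1/2}$, where $J:=\int_0^tM(\Phi)\,\mathrm ds$. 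You therefore arrive at $S\le M(\mathcal M_0+J+J^{1/2}S^{1/2})$. For a general nonlinear $M$ (already for exponential growth) this gives no bound on $S$, and hence not \eqref{app:discrete-apriori}. Repairing it would require an additional bootstrap on $\Phi+\int\mathcal D$, which proves a different statement from the one claimed.

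\textbf{The missing idea is one you already use in Step~2:} transfer the weighted projection onto the test function instead of estimating $\partial_t\mathcal A^{\rm ch}$ in $L^2$. For $h\in H^1$, since $\mu_t\in X_m$, one has $(\rho\mu_t,h)=(\rho\mu_t,\mathcal P_m^\rho h)$. Testing the time-differentiated discrete chemical relation by $\mathcal P_m^\rho h\in X_m$ gives
\[
(\partial_t\mathcal A^{\rm ch},h)=(\nabla\phi_t,\nabla\mathcal P_m^\rho h)+\bigl(\rho_t[\mu-F(\phi)]-\rho F'(\phi)\phi_t,\,h-\mathcal P_m^\rho h\bigr).
\]
By Lemma~\ref{app:weighted-projection-lemma}, $|(\partial_t\mathcal A^{\rm ch},h)|\le M_c(\Phi)\|h\|_{H^1}$.

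Take $h=u_t\cdot\nabla\phi$, for which $\|h\|_{H^1}\le C\|\phi\|_{H^3}\|\nabla u_t\|_{L^2}$. This bounds the differentiated force by $\varepsilon\|\nabla u_t\|_{L^2}^2+M_{c,\varepsilon}(\Phi)$, with no $\mu_t$ at all. The $u_t$ estimate then closes on its own exactly as in Lemma~\ref{Lemma_ut}, the sequential chain is preserved, and the splitting through $\mathcal G$ becomes unnecessary.

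A minor slip: the energy identity comes from testing the chemical relation itself by $\phi_t$, not its time derivative.
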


\begin{proof}
The proof follows Parts I--VI of Section~\ref{sec:apriori}
in the same order. We cite the unchanged
calculations and detail only the modifications caused by the phase
projection. All phase tests belong to $X_m$, and the momentum force
remains $\mathcal A_m^{\rm ch}\nabla\phi_m$. Constants may depend on
$c^{-1}$ but are independent of $m$.

For simplicity, we suppress the index $m$ on the solution
variables, $R_m$, $\mathcal A_m^{\rm ch}$, $\Phi_m$,
$\mathcal D_m$, and $\mathcal M_{0m}$, and write
$P:=\mathcal P_m^\rho$ and $Q:=Q_m$.
Thus $\mathcal A^{\rm ch}=\rho\mu-\rho F(\phi)-R$.
The nondecreasing function $M_c$ may vary from line to line
and may be enlarged.

\medskip
\noindent\textit{Part I: Basic energy and the $H^1$ bound for $\phi$.}
Applying the arguments of Lemmas~\ref{Lemma_energy} and
\ref{Lemma_phi_H1} to the semi-discrete system, with $R_m$
in place of $R$, we obtain
\begin{equation}\label{app:detailed-basic-bound}
	\sup_{0\le s\le t}\|\phi(s)\|_{H^1}^2
	+\int_0^t\bigl(
	\|\nabla u(s)\|_{L^2}^2+\|\nabla\mu(s)\|_{L^2}^2
	\bigr)\,\mathrm ds
	\le M_c\left(
	\mathcal M_0+\int_0^t M_c(\Phi(s))\,\mathrm ds
	\right).
\end{equation}
\medskip
\noindent\textit{Part II: Estimates for the velocity.}
In the elliptic argument of Lemma~\ref{Lemma_u_H2}, the only changed
term is the force. Since $q>3$, the product estimate gives
\[
\begin{aligned}
	\|\mathcal A^{\rm ch}\|_{H^1}
	\le C\|\rho\|_{W^{1,q}}
	\bigl(\|\mu\|_{H^1}+\|F(\phi)\|_{H^1}\bigr)
	+C\|R\|_{H^1}
	\le M_c(\Phi),
\end{aligned}
\]
and hence
$\|\mathcal A^{\rm ch}\nabla\phi\|_{L^2}\le M_c(\Phi)$.
Estimating the convection term as in Lemma~\ref{Lemma_u_H2}
and using the transport estimate \eqref{rho_rt_est}, we obtain
\begin{equation}\label{app:positive-density-auxiliary-pointwise}
	\begin{aligned}
		\|u\|_{H^2}+\|\rho_t\|_{L^q}
		+\|u_t\|_{L^2}+\|\phi_t\|_{H^1}
		+\|\rho_t[\mu-F(\phi)]-\rho F'(\phi)\phi_t\|_{L^2}
		\le M_c(\Phi).
	\end{aligned}
\end{equation}
Here the unweighted time derivatives are controlled by
$\rho\ge c$ and the corresponding terms in $\Phi$.

To estimate the differentiated force in Lemma~\ref{Lemma_ut}, we differentiate the
chemical equation in time:
\[
(\rho\mu_t,\chi)
=(\nabla\phi_t,\nabla\chi)
-(\rho_t[\mu-F(\phi)]-\rho F'(\phi)\phi_t,\chi),
\qquad \chi\in X_m.
\]
For any $h\in H^1$, the definition of $P$ and $\mu_t\in X_m$
give $(\rho\mu_t,h)=(\rho\mu_t,Ph)$. Therefore,
\begin{align}
	(\rho\mu_t,h)
	&=(\nabla\phi_t,\nabla Ph)
	-(\rho_t[\mu-F(\phi)]-\rho F'(\phi)\phi_t,Ph),
	\label{app:weighted-transfer-chemical}\\
	(\partial_t\mathcal A^{\rm ch},h)
	&=(\nabla\phi_t,\nabla Ph)
	+(\rho_t[\mu-F(\phi)]-\rho F'(\phi)\phi_t,h-Ph).
	\label{app:weighted-transfer-force}
\end{align}
The second identity follows from
$\partial_t\mathcal A^{\rm ch}
=\rho\mu_t+\rho_t[\mu-F(\phi)]-\rho F'(\phi)\phi_t$.
Similarly,
\begin{equation}\label{app:weighted-transfer-H}
	\begin{aligned}
		((\rho\mu)_t,h)
		=(\nabla\phi_t,\nabla Ph)
		+(\rho_tF(\phi)+\rho F'(\phi)\phi_t,Ph)
		+(\rho_t\mu,h-Ph).
	\end{aligned}
\end{equation}
These identities are applied at each fixed time and do not
require differentiating $P$.

By Lemma~\ref{app:weighted-projection-lemma} and
\eqref{app:positive-density-auxiliary-pointwise},
\begin{equation}\label{app:weighted-transfer-estimate}
 |(\rho\mu_t,h)|+|(\partial_t\mathcal A^{\rm ch},h)|
 \le M_c(\Phi)\|h\|_{H^1}.
\end{equation}
Taking $h=u_t\cdot\nabla\phi$, we have
$\|h\|_{H^1}\le C\|\phi\|_{H^3}\|\nabla u_t\|_2$ and hence
\begin{equation}\label{app:modified-capillary-time-bound}
\begin{aligned}
 |\langle\partial_t(\mathcal A^{\rm ch}\nabla\phi),u_t\rangle|
 &\le |(\partial_t\mathcal A^{\rm ch},u_t\cdot\nabla\phi)|
       +\|\mathcal A^{\rm ch}\|_3\|\nabla \phi_t\|_2\|u_t\|_6\\
 &\le\varepsilon\|\nabla u_t\|_2^2+M_{c,\varepsilon}(\Phi).
\end{aligned}
\end{equation}
The remaining terms are exactly those estimated in
Lemma~\ref{Lemma_ut}. Choosing $\varepsilon$ sufficiently
small and absorbing the gradient terms, we obtain
\begin{equation}\label{app:modified-ut-differential}
 \frac{\mathrm d}{\mathrm dt}\|\sqrt\rho u_t\|_2^2
       +c_u\|\nabla u_t\|_2^2\le M_c(\Phi),\qquad c_u>0.
\end{equation}
The initial compatibility bounds therefore imply
\begin{equation}\label{app:closed-velocity-time}
 \sup_{0\le s\le t}\|\sqrt{\rho(s)}u_t(s)\|_2^2
       +\int_0^t\|\nabla u_t\|_2^2\,\mathrm ds\le M_c\left(\mathcal M_0+\int_0^t M_c(\Phi(s))\,\mathrm ds\right).
\end{equation}

The proof of Lemma~\ref{Lemma_nabla_u} uses only the time derivative
of $\|\nabla u\|_2^2$ and the preceding $u_t$ estimate. It applies
without change, with \eqref{app:closed-velocity-time} in place of
\eqref{ut-closed}, and gives
\begin{equation}\label{app:closed-velocity-gradient}
 \sup_{0\le s\le t}\|u(s)\|_{H^1}^2\le M_c\left(\mathcal M_0+\int_0^t M_c(\Phi(s))\,\mathrm ds\right).
\end{equation}

\medskip
\noindent\textit{Part III: Estimates for the density.}
Since
\[
\|\mathcal A^{\rm ch}\nabla\phi\|_{L^q}
\le C\|\mathcal A^{\rm ch}\|_{H^1}
\|\nabla\phi\|_{L^\infty}
\le M_c(\Phi),
\]
the elliptic argument of Lemma~\ref{nabla2_u_est} gives
\begin{equation}\label{app:detailed-u-W2q}
	\|u\|_{W^{2,q}}
	\le M_c(\Phi)\bigl(1+\|\nabla u_t\|_{L^2}\bigr).
\end{equation}
Applying the arguments of Lemmas~\ref{nabla2_u_est} and
\ref{Lemma_rho_W1q} and using
\eqref{app:closed-velocity-time}, we obtain
\begin{equation}\label{app:closed-density}
	\begin{aligned}
		\sup_{0\le s\le t}\|\rho(s)\|_{W^{1,q}}
		+\int_0^t\bigl(
		\|\nabla^2u(s)\|_{L^q}
		+\|\nabla u(s)\|_{L^\infty}
		\bigr)\,\mathrm ds
		\le M_c\left(
		\mathcal M_0+\int_0^t M_c(\Phi(s))\,\mathrm ds
		\right).
	\end{aligned}
\end{equation}

\medskip
\noindent\textit{Part IV: Estimates for $\mu$ and $\nabla\phi_t$.}
The calculation in Lemma~\ref{Lemma_rho_mu_phit} uses only the
chemical tests $\mu,\mu_t$ and the phase test $\phi_t$, all in $X_m$.
Its identity \eqref{rho_mu_energy} and the five estimates following
it are therefore unchanged, and give
\begin{equation}\label{app:closed-weighted-mu}
 \sup_{0\le s\le t}\|\sqrt{\rho(s)}\mu(s)\|_2^2
       +\int_0^t\|\sqrt\rho \phi_t\|_2^2\,\mathrm ds
 \le M_c\left(\mathcal M_0+\int_0^t M_c(\Phi(s))\,\mathrm ds\right).
\end{equation}
For Lemma~\ref{Lemma_mu_phi_t}, the only inadmissible substitution
would be the pointwise chemical identity in the pairing with
$u\cdot\nabla\phi$. Instead, test the phase equation with $\mu_t$
and the differentiated chemical equation with $\phi_t$ to obtain
\begin{equation}\label{app:transferred-low-phase-identity}
 \tfrac12\frac{\mathrm d}{\mathrm dt}\|\nabla\mu\|_2^2
       +\|\nabla \phi_t\|_2^2
 =-(\rho\mu_t,u\cdot\nabla\phi)+(\rho_t[\mu-F(\phi)]-\rho F'(\phi)\phi_t,\phi_t).
\end{equation}
By \eqref{app:weighted-transfer-estimate} and
$\|u\cdot\nabla\phi\|_{H^1}\le C\|u\|_{H^2}\|\phi\|_{H^3}$,
the right-hand side is bounded by $M_c(\Phi)$.
After integration, \eqref{app:closed-weighted-mu} and $\rho\ge c$
also control $\|\mu\|_2$, giving the analogue of
\eqref{phi_mu_t_energy}.

In Lemma~\ref{Lemma_phi_H2}, replace the chemical relation by its
projected version $-\Delta\phi=Q\mathcal A^{\rm ch}$. Since $Q$
is an $L^2$ contraction,
\[
 \|\phi\|_{H^2}
 \le C\|\rho\|_\infty
       (\|\mu\|_2+\|\phi\|_{H^1}^3+\|\phi\|_2)
       +C\|R\|_2+C\|\phi\|_2.
\]
The already established bounds thus give
\begin{equation}\label{app:detailed-low-closure}
\begin{aligned}
 \sup_{0\le s\le t}\bigl(
              \|\mu(s)\|_{H^1}+\|\phi(s)\|_{H^2}\bigr)
 +\int_0^t\|\nabla \phi_t\|_2^2\,\mathrm ds
 \le M_c\left(\mathcal M_0+\int_0^t M_c(\Phi(s))\,\mathrm ds\right).
\end{aligned}
\end{equation}

\medskip
\noindent\textit{Part V: High-order joint estimate for the phase variables.}
The arguments of Lemmas~\ref{Lemma_phase_gradient_modified}
and \ref{Lemma_phase_high_joint} apply to the semi-discrete
system, since the tests $\mu_t$, $\phi_t$, $\phi_{tt}$, and $1$
all belong to $X_m$.
The lower-order bounds required to control the correction
term follow from \eqref{app:detailed-low-closure}.
Moreover,
\[
\phi_t(0)
=\mathcal P_m^{\rho_0}
\bigl(h_{0m}-u_0\cdot\nabla\phi_{0m}\bigr),
\]
whose uniform $H^1$ bound is given by
\eqref{app:discrete-phit-uniform}.
Using these bounds and \eqref{app:closed-velocity-time},
we obtain
\begin{equation}\label{app:detailed-high-closure}
	\begin{aligned}
		\sup_{0\le s\le t}\Bigl(
		\|\nabla\phi_t(s)\|_{L^2}^2
		+\|\sqrt{\rho(s)}\phi_t(s)\|_{L^2}^2
		\Bigr)
		+\int_0^t\Bigl(
		\|\nabla\mu_t(s)\|_{L^2}^2
		+\|\sqrt{\rho(s)}\mu_t(s)\|_{L^2}^2
		\Bigr)\,\mathrm ds\\
		\le M_c\left(
		\mathcal M_0+\int_0^t M_c(\Phi(s))\,\mathrm ds
		\right).
	\end{aligned}
\end{equation}

 Then, using \eqref{app:phase-spectral-identities} and the uniform
$L^2$ and $H^1$ bounds for $Q$, we apply the elliptic argument
of Lemma~\ref{Lemma_mu_phi_elliptic_closed}.
Together with \eqref{app:detailed-low-closure} and
\eqref{app:detailed-high-closure}, this yields
\[
\|\mu(t)\|_{H^2}+\|\phi(t)\|_{H^3}
\le M_c\left(
\mathcal M_0+\int_0^t M_c(\Phi(s))\,\mathrm ds
\right).
\]
The momentum elliptic estimate then gives the same bound
for $\|u(t)\|_{H^2}$, using
$\|\mathcal A^{\rm ch}\nabla\phi\|_{L^2}
\le \|\mathcal A^{\rm ch}\|_{L^3}\|\nabla\phi\|_{L^6}$
and estimating the convection term as in Part II.

Collecting the estimates established in Parts I--VI,
we obtain \eqref{app:discrete-apriori}.
The time-differentiated calculations are justified by
Lemma~\ref{app:lame-parabolic-lemma} for the velocity
and by \eqref{app:fixed-m-phase-regularity} for the phase variables.
\end{proof}

\noindent\textit{Common lifespan at fixed positive density.}
Restore the index $m$ and set $c=\underline\rho/2$.
Let $B_*$ also dominate $\sup_m\Phi_m(0)$.
Applying the continuity argument in
Remark~\ref{rem:apriori-closure} with $M_c$ in place of $M$,
we choose
\[
K:=2M_c(B_*+1)
\]
and $T>0$, independent of $m$, such that
$TM_c(K)\le1$. The resulting estimate improves
$\Phi_m\le K$ to $\Phi_m\le K/2$ as long as $\rho_m\ge c$.

Moreover, \eqref{app:detailed-u-W2q} and
\eqref{app:closed-velocity-time} imply
\[
\int_0^t\|\nabla u_m(s)\|_{L^\infty}\,\mathrm ds
\le C M_c(K)\left(
t+t^{1/2}M_c(B_*+1)^{1/2}
\right),
\qquad t\le T.
\]
Reducing $T$ so that the right-hand side is at most
$\frac13\log(4/3)$, the characteristic formula gives
$\rho_m\ge3\underline\rho/4$.
The usual first-exit argument, together with the local
construction at fixed $m$, extends all approximations
to the common interval $[0,T]$.

Finally, \eqref{app:detailed-u-W2q} and the established
$u_{m,t}$ estimate give a uniform bound for
$\|u_m\|_{L^2(0,T;W^{2,q})}$.
By the projected phase equation, the $H^1$-stability of $Q_m$,
and the product estimate,
\[
\|\mu_m\|_{H^3}
\le C\|Q_m[\rho_m(\phi_{m,t}
+u_m\cdot\nabla\phi_m)]\|_{H^1}
+C\|\mu_m\|_{L^2}
\le C_{\underline\rho,K}.
\]
Combining these bounds with the preceding dissipation
estimates, we obtain
\[
\rho_m\ge\tfrac12\underline\rho,
\qquad
\sup_m\mathfrak Y_m(T_{\underline\rho})
\le C_{\underline\rho},
\qquad T_{\underline\rho}:=T>0.
\]
Both $T_{\underline\rho}$ and $C_{\underline\rho}$ are
independent of $m$, but may depend on
$\underline\rho^{-1}$ and $B_*$.

\subsection{Passage to the limit and completion of the proof}
\label{subsec:positive-step-limit}

By the uniform bounds for $\mathfrak Y_m$ and standard
compactness arguments, we extract a subsequence converging
weakly or weakly-star in the corresponding spaces, with
time derivatives identified distributionally, and strongly
as follows:
\begin{equation}\label{app:m-strong-convergence}
	\begin{aligned}
		\rho_m&\to\rho
		&&\text{in }C([0,T_{\underline\rho}];C(\overline\Omega)),
		&\qquad
		u_m&\to u
		&&\text{in }C([0,T_{\underline\rho}];W^{1,q}),
		\\
		\phi_m&\to\phi
		&&\text{in }C([0,T_{\underline\rho}];H^2),
		&
		\mu_m&\to\mu
		&&\text{in }C([0,T_{\underline\rho}];H^1).
	\end{aligned}
\end{equation}
These strong convergences, Sobolev embeddings, and
$R_m\to R$ in $H^1$ imply
$\mathcal A_m^{\rm ch}\to\mathcal A^{\rm ch}$ in
$C([0,T_{\underline\rho}];L^3)$ and yield
\begin{align}
	\rho_mF(\phi_m)&\to\rho F(\phi)
	&&\text{in }C([0,T_{\underline\rho}];L^2),
	&\quad
	\rho_m\mu_m&\to\rho\mu
	&&\text{in }C([0,T_{\underline\rho}];L^2),\notag\\
	\rho_m u_m\cdot\nabla\phi_m&\to\rho u\cdot\nabla\phi
	&&\text{in }C([0,T_{\underline\rho}];L^2),
	&\quad
	\mathcal A_m^{\rm ch}\nabla\phi_m
	&\to\mathcal A^{\rm ch}\nabla\phi
	&&\text{in }C([0,T_{\underline\rho}];L^2),
	\label{app:nonlinear-strong-limit}
\end{align}
where
$\mathcal A^{\rm ch}=\rho\mu-\rho F(\phi)-R$. Hence
\[
\|\mathcal G_m\|_{C_tL^2_x}
\le \|\mathcal A_m^{\rm ch}-\mathcal A^{\rm ch}\|_{C_tL^2_x}
+\|(I-Q_m)\mathcal A^{\rm ch}\|_{C_tL^2_x}
\to0.
\]
Here the strong convergence $Q_m\to I$ on $L^2$ is uniform
on the compact image of $t\mapsto\mathcal A^{\rm ch}(t)$.
The uniform $H^3$ bound for $\phi_m$ also gives
$\mathcal G_m\nabla\phi_m\to0$ in $L^\infty_tL^2_x$.

The strong convergence of $\rho_m$ identifies the weighted
weak limit of $\phi_{m,t}$:
\begin{equation}\label{app:m-weighted-phit-limit}
\rho_m\phi_{m,t}\rightharpoonup^*\rho\phi_t
\quad\text{in }L^\infty(0,T_{\underline\rho};L^2).
\end{equation}
Testing the Galerkin phase equations with each fixed
eigenfunction $e_j$ and a smooth time test function, passing
to the limit using \eqref{app:nonlinear-strong-limit} and
\eqref{app:m-weighted-phit-limit}, and using density in
$H^1(\Omega)$, we recover
\[
\Delta\mu=\rho(\phi_t+u\cdot\nabla\phi),
\qquad
\rho\mu=-\Delta\phi+\rho F(\phi)+R,
\qquad
\partial_n\phi=\partial_n\mu=0.
\]
In particular,
$\mathcal A^{\rm ch}\nabla\phi=-(\Delta\phi)\nabla\phi$.
Passing to the limit in the conservative forms of the
continuity and momentum equations gives
\[
\begin{aligned}
\rho_t+\operatorname{div}(\rho u)&=0,\\
\rho(u_t+u\cdot\nabla u)-\mathcal Lu+\nabla\rho^\gamma
&=-(\Delta\phi)\nabla\phi.
\end{aligned}
\]
The homogeneous Dirichlet condition is preserved by weak
convergence in $L^2(0,T_{\underline\rho};H_0^1)$.

At this stage, the convergence in \eqref{app:m-strong-convergence},
together with the convergence of the approximate initial
data, identifies the initial values in the corresponding
spaces. Weak lower semicontinuity yields the uniform bounds
for the limits.

We now improve the time continuity to the
regularity asserted in Theorem~\ref{positive-density-thm}.
Since $u\in L^1(0,T_{\underline\rho};W^{2,q})$,
Lemma~\ref{app:transport-lemma} gives
$
\rho\in C([0,T_{\underline\rho}];W^{1,q}).
$
Differentiating the phase equation in time in the sense of distributions and using the
established bounds, together with the multiplier property
of $\rho^{-1}\in L^\infty(0,T_{\underline\rho};W^{1,q})$,
gives
\[
\phi_{tt}\in L^2(0,T_{\underline\rho};(H^1)').
\]
Since $\phi_t\in L^2(0,T_{\underline\rho};H^1)$ and
$\phi_{tt}\in L^2(0,T_{\underline\rho};(H^1)')$,
the Lions--Magenes continuity theorem \cite{LionsMagenes1972}
yields
$\phi_t\in C([0,T_{\underline\rho}];L^2)$.
Consequently, using \eqref{app:m-strong-convergence},
\[
\Delta\mu=\rho(\phi_t+u\cdot\nabla\phi)
\in C([0,T_{\underline\rho}];L^2).
\]
Neumann elliptic regularity and the continuity of $\mu$
in $L^2$ therefore give
$\mu\in C([0,T_{\underline\rho}];H^2)$.
The chemical identity then yields
\[
-\Delta\phi=\rho\mu-\rho F(\phi)-R
\in C([0,T_{\underline\rho}];H^1),
\]
and hence
$\phi\in C([0,T_{\underline\rho}];H^3)$.

Similarly, differentiating the momentum equation in time in the sense of distributions
and using the established bounds gives
\[
u_{tt}\in L^2(0,T_{\underline\rho};H^{-1}).
\]
Together with $u_t\in L^2(0,T_{\underline\rho};H_0^1)$,
this implies $u_t\in C([0,T_{\underline\rho}];L^2)$.
Thus
\[
\mathcal Lu
=\rho(u_t+u\cdot\nabla u)
+\nabla\rho^\gamma+(\Delta\phi)\nabla\phi
\in C([0,T_{\underline\rho}];L^2),
\]
and Dirichlet elliptic regularity gives
$u\in C([0,T_{\underline\rho}];H^2)$.

The resulting strong traces coincide with the initial
values identified above. This establishes the existence
and regularity assertions of
Theorem~\ref{positive-density-thm}, with
$T_*=T_{\underline\rho}$.
Uniqueness follows from Subsection~\ref{sec:uniqueness},
whose argument also applies to positive-density solutions
with the same residual $R$. This completes the proof of Theorem~\ref{positive-density-thm}.

\subsection{The continuation criterion}
\label{app:continuation-proof}
We conclude this appendix by proving
Proposition~\ref{positive-density-continuation}.
\begin{proof}
	By the phase equation and Neumann elliptic regularity,
	\[
	\|\mu(t)\|_{H^3}
	\le C\Bigl(
	\|\rho(\phi_t+u\cdot\nabla\phi)(t)\|_{H^1}
	+\|\mu(t)\|_{L^2}
	\Bigr)
	\le C(c_*^{-1},C_*)
	\]
	for almost every $t<T^*$.
	At such times, the equations yield the compatibility
	conditions in Theorem~\ref{positive-density-thm}, with
	uniformly bounded compatibility data.
	The local existence theorem and the uniform lifespan
	established in Subsection~\ref{subsec:positive-step-uniform}
	therefore allow us to restart the solution at these times
	with a common existence time $\theta>0$.
	Choosing a restarting time $t_0$ such that
	$T^*-t_0<\theta$ and using uniqueness on the overlap,
	we extend the solution beyond $T^*$.
\end{proof}

\end{document}